\numberwithin{equation}{section}
\newtheorem{hypothesis}{Hypothesis}
\newtheorem{theorem}{Theorem}[section]
\newtheorem{lemma}{Lemma}[section]
\newtheorem{proposition}{Proposition}[section]
\newtheorem{corollary}{Corollary}[section]
\newtheorem{remark}{Remark}[section]
\def\R{\mathbb{R}}
\def\Z{\mathbb{Z}}
\def\K{\mathbb{K}}
\def\S{\mathcal{S}}
\def\N{\mathbb{N}}
\def\F{\mathcal{F}}
\def\T{\mathbb{T}}
\def\supp{\mathop{\rm supp}\nolimits}
\newcommand\cro[1]{\langle #1 \rangle}
\def\ut{\widetilde{u}}
\def\zt{\widetilde{z}}
\def\wt{\widetilde{w}}
\begin{document}
\selectlanguage{english}
\title[  ]{\bf Improvement of the energy method for strongly non resonant dispersive equations and applications}
\author{Luc Molinet}
\address{Laboratoire de Math\'ematiques et Physique Th\'eorique\\
Universt\'e Francois Rabelais Tours,\\
CNRS UMR 7350- F\'ed\'eration Denis Poisson\\
Parc Grandmont, 37200 Tours, France}
\author{St\'ephane Vento}
\address{Université Paris 13\\ Sorbonne Paris Cité, LAGA, CNRS ( UMR 7539)\\  99, avenue Jean-Baptiste Clément\\ F-93 430 Villetaneuse\\ France }
\begin{abstract}
In this paper we propose a new approach to prove the local well-posedness of the Cauchy problem associated with strongly non resonant dispersive equations. As an example we obtain unconditional well-posedness of the Cauchy problem below $ H^1 $  for a large class of  one-dimensional dispersive equations with a dispersion that is greater or equal to the one of the Benjamin-Ono equation. Since this is done without using a gauge transform, this enables us to prove strong convergence results for  solutions  of viscous versions of these equations towards the purely dispersive solutions.
\end{abstract}
\keywords{}
\subjclass[2010]{}
\date{}
\maketitle
\section{Introduction}
The Cauchy problem associated with dispersive equations with derivative nonlinearity has been extensively studied since the eighties.
The first results  were obtained by using energy methods that did not make use of the dispersive effects (see for instance \cite{kato} and  \cite{abfs}). These methods were restricted to regular initial data ($ s>d/2 $ were $d \ge 1 $ is the spatial dimension) and only ensured the continuity of the solution-map. At the end of the eighties, Kenig, Ponce and Vega proved new dispersive estimates that enable them to lower the regularity requirement on the initial data
 (see for instance \cite{KPV1}, \cite{KPV2}, \cite{Po1}). They even succeed to obtain local well-posedness for a large class of dispersive equations by a fixed point argument in a suitable Banach space related to linear dispersive estimates. Then in the early nineties, Bourgain introduced the now so-called Bourgain's spaces where one can solve by a fixed point argument  a wide class of dispersive equations with very rough initial data (\cite{Bourgain1993}, \cite{Bourgain1993KP}). It is worth noticing that, since the nonlinearity of these equations is in general algebraic,  the fixed point argument ensures the real analyticity of the solution-map. Now, in the early 2000's,
 Molinet, Saut and Tzvetkov \cite{MST} noticed that a large class of "weakly" dispersive equations, including in particular the Benjamin-Ono equation, cannot be solved by a fixed point argument for initial data in any Sobolev spaces $ H^s $.
  This obstruction is due to bad interactions between high frequencies and very low frequencies.  Since then, roughly speaking,  two approaches have been developed to lower the regularity requirement for such equations. The first one is the so called gauge method. This consists in introducing a nonlinear gauge transform  of the solution  that solved an equation with less bad interactions than the original one. This method was proved to be very efficient to obtain the lowest regularity index for solving canonical equations (see \cite{Tao}, \cite{IK}, \cite{BP}, \cite{MP} for the BO equation and \cite{HIKK} for dispersive generalized BO equation)  but has the disadvantage to behave very bad with respect to perturbation of the equation. The second one consists in improving the dispersive estimates by localizing it on space frequency depending time intervals and then mixing it  with classical energy estimates. This type of method was first introduced by Koch and Tzvetkov  \cite{KT1} (see also \cite{KK} for some
 improvements)
  in the framework of Strichartz's spaces and then by Koch and Tataru \cite{KTa} (see also \cite{IKT}) in the framework of Bourgain's spaces. It is less efficient to get the best regularity index but it is surely more flexible with respect to perturbation of the equation.

In this paper we propose a new approach to derive local and global well-posedness results for dispersive equations that do not exhibit too strong resonances. This approach combines classical energy estimates with Bourgain's type estimates on an interval of time that does not depend on the space frequency.  
 Here, we will apply this method to prove  unconditional  local well-posedness results on both $\R$ and $\T$ without the use of a gauge transform for a large class of one-dimensional quadratic dispersive equations with a dispersion between the one of the Benjamin-Ono equation and the KdV equation. This class contains in particular the equations with  pure power dispersion that read
\begin{equation}\label{gv}
u_t + \partial_x D_x^{\alpha} u  + u u_x=0
\end{equation}
with $ \alpha\in [1,2] $. \\
The principle of the method is particularly simple in the regular case $ s>1/2 $. We start with the classical space frequency localized energy estimate
\begin{equation}\label{energy-est}
\|P_N u\|^2_{L^\infty_T H^s} \lesssim \|P_N u_0\|_{H^s}^2 + \sup_{t\in ]0,T[}  \langle N\rangle^{2s} \Bigl| \int_0^t\int_{\R}   \partial_x P_N(u^2) P_N u \Bigr|
\end{equation}
obtained by projecting the equation on frequencies of order $N$ and taking the inner product with $J^s_x u$. Note that the second term in the RHS of (\ref{energy-est}) is easily controlled (after summing in $N$) by $\|u\|_{L^ \infty_TH^s}^3$ for $s>3/2$. This is the main point in the standard energy method that lead to LWP in $H^s$, $s>3/2$. In order to take into account the dispersive effects of the equation, we will decompose the three factors in the integral term into dyadic pieces for the modulation variables and use the Bourgain's spaces $X^{s,b}$ in a non conventional way. Actually, it is known that standard bilinear estimates in $ X^{s,b}$-spaces with $b=1/2+$ fail, for equation \eqref{gv}, for any $s\in \R$ as soon as $\alpha<2$. On the other hand, as noticed in
 \cite{MN},  it is easy to deduce from the equation that a solution $ u\in L^\infty(0,T;H^s) $ to \eqref{bo} has to belong to the space $ X^{s-1,1}_T $. This means that, if we accept to lose a few spatial derivatives on the solution, then we may gain some regularity in the modulation variable. This is particularly profitable when the equation enjoys a strongly non resonance relation as (\ref{resonance}). Actually,
 this formally allows to estimate the second term in (\ref{energy-est}) at the desired level. However,  this term  involves a multiplication  by   $ 1_{]0,t[}$  and it is well-known that such multiplication is not bounded in  $ X^{s-1,1}$. To overcome this difficulty we  decompose this function into two parts. A high frequency part that will be very small in $ L^1_T $ and a low-frequency part that will have good properties with respect to multiplication  with high modulation functions in $ X^{s-1,1} $. This decomposition will depend on the space frequency localization of the three functions that appear in the trilinear term.

\subsection{Presentation of the results}
In this paper we consider the dispersive equation
\begin{equation}\label{bo}
u_t + L_{\alpha+1} u  + u u_x=0
\end{equation}
where $x\in \R $ or $ \T$, $ u=u(t,x) $ is a real-valued function and  the linear operator $ L_{\alpha+1} $satisfies the following hypothesis.
\begin{hypothesis}	\label{hyp1}
We assume that $ L_{\alpha+1} $ is the Fourier multiplier operator by $ i p_{\alpha+1} $ where  $ p_{\alpha+1} $ is a real-valued  odd function satisfying : \\
 For any $ (\xi_1,\xi_2)\in \R^2 $ with $ |\xi_1| \gg 1 $ and any $ 0<\lambda\ll 1 $ it holds
\begin{equation}\label{hyp2}
\lambda^{\alpha+1} |\Omega(\lambda^{-1}\xi_1, \lambda^{-1} \xi_2) |
\sim |\xi|_{\min} |\xi|_{\max}^\alpha \, ,
\end{equation}
where
$$
\Omega(\xi_1,\xi_2):=p_{\alpha+1}(\xi_1+\xi_2)-p_{\alpha+1}(\xi_1)-p_{\alpha+1}(\xi_2) \, ,
$$
 $ |\xi|_{\min}:=\min(|\xi_1|,|\xi_2|, |\xi_1+\xi_2|) $ and $ |\xi|_{\max}:=\max(|\xi_1|,|\xi_2|, |\xi_1+\xi_2|) $.
 \end{hypothesis}
 \begin{remark}\label{remark-ex}
 We will see in Lemma \ref{hyp} below that, for $ \alpha>0 $, a very simple criterion on $ p$ ensures \eqref{hyp2}. With this criterion in hand, it is not too hard to check that the following linear operators satisfy  Hypothesis \ref{hyp1} :
 \begin{enumerate}
 \item The purely dispersive operators $ L:=\partial_x D_x^\alpha $ with $ \alpha>0$.
 \item The linear Intermediate Long Wave operator
 $ L:=\partial_x  D_x \coth(D_x) $. Note that here $ \alpha=1$.
 \item Some perturbations of the Benjamin-Ono equation as the Smith operator $ L:=\partial_x (D_x^2+1)^{1/2}$ (see \cite{smith}). Here again $ \alpha=1$.
  \end{enumerate}
 \end{remark}
\begin{theorem}\label{theo1}
Let $ \K=\R $ or $ \T $, $ L_{\alpha+1} $ satisfying Hypothesis \ref{hyp1} with $ 1\le \alpha\le 2 $ and let $ s\ge 1-\frac{\alpha}{2} $ with $ (s,\alpha) \neq (\frac{1}{2},1)$. Then the Cauchy problem associated
 with \eqref{bo} is unconditionally locally well-posed in $ H^s(\K ) $ with a maximal time of existence $ T\gtrsim (1+\|u_0\|_{H^{1-\frac{\alpha}{2}}})^{-\frac{2(\alpha+1)}{2\alpha-1}}$ .
\end{theorem}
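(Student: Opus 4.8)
The plan is to establish Theorem~\ref{theo1} through a priori estimates combined with a compactness/uniqueness argument, centered on the frequency-localized energy estimate \eqref{energy-est} and the ``modulation-gain'' philosophy sketched in the introduction. The first step is to set up the functional framework: I would work with the Bourgain spaces $X^{s,b}$ adapted to the dispersion $p_{\alpha+1}$, and record the basic fact (following \cite{MN}) that any $u\in L^\infty_T H^s$ solving \eqref{bo} automatically lies in $X^{s-1,1}_T$. This trades one spatial derivative for a full unit of regularity in the modulation variable, which is the crucial currency for exploiting the non-resonance relation \eqref{hyp2}. I would then derive the dyadic energy inequality \eqref{energy-est} rigorously by projecting \eqref{bo} onto $P_N$ and pairing with $J^{2s}_x P_N u$, so that the whole problem reduces to controlling the trilinear form $\langle N\rangle^{2s}\int_0^t\!\int \partial_x P_N(u^2)\,P_N u$, summed over dyadic $N$.

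The heart of the proof is estimating this trilinear term at the regularity level $s\ge 1-\tfrac{\alpha}{2}$. I would decompose each of the three factors dyadically both in space frequency (sizes $N_1,N_2,N_3$) and in modulation (sizes $L_1,L_2,L_3$), so that the integral becomes a sum of localized pieces. On the resonant set the phase $\Omega$ is small, but Hypothesis~\ref{hyp1} guarantees that away from the diagonal one has the lower bound $|\Omega|\sim |\xi|_{\min}|\xi|_{\max}^\alpha$; since on the support of the integrand $\Omega$ must be balanced by the largest modulation, this forces $L_{\max}\gtrsim N_{\min}N_{\max}^\alpha$. That is precisely the gain that compensates the derivative loss incurred by passing to $X^{s-1,1}$. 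The key technical ingredients will be an $L^2$ trilinear (Strichartz-type / $L^4$–$L^4$–$L^2$) estimate in the Bourgain spaces, applied so that the two high-frequency factors absorb the derivative and the modulation weight $L_{\max}$ is distributed to recover the needed powers of $N$. Summation over $N_1,N_2,N_3$ and over $L_1,L_2,L_3$ then closes provided $s\ge 1-\tfrac{\alpha}{2}$, with the borderline case dictating the excluded endpoint $(s,\alpha)=(\tfrac12,1)$.

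The main obstacle, and the genuinely new point, is the sharp cutoff $1_{]0,t[}$ appearing in \eqref{energy-est}, which is \emph{not} a bounded multiplier on $X^{s-1,1}$ because it introduces arbitrarily high modulation. To handle this I would split $1_{]0,t[}$ (after a smooth time-truncation at scale $T$) into a low-modulation part and a high-modulation part at a threshold $M$ to be optimized. The low-modulation part behaves like a well-localized function and is compatible with multiplication against the high-modulation factors in $X^{s-1,1}$, so the earlier trilinear analysis applies directly. The high-modulation tail, on the other hand, I would estimate crudely by its $L^1_T$ norm, which decays in $M$; the point is that the threshold $M$ must be chosen as a function of the frequency sizes $N_1,N_2,N_3$ so that both contributions are summable. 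Balancing these two competing bounds — large $M$ helps the tail but hurts the low-modulation piece, and vice versa — is the delicate step, and it is where the precise exponent in the non-resonance relation \eqref{hyp2} enters, including the power of $(1+\|u_0\|_{H^{1-\alpha/2}})$ that produces the stated lower bound on the existence time $T$.

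Once the a priori bound $\|u\|_{L^\infty_T H^s}\lesssim \|u_0\|_{H^s}$ is secured on a time interval of the advertised length, I would finish by the standard scheme: existence via a limiting argument on regularized data (using the uniform bound together with the $X^{s-1,1}_T$ information to pass to the limit in the nonlinearity), and uniqueness (hence \emph{unconditional} well-posedness in $L^\infty_T H^s$ with no auxiliary space) by running the same trilinear estimates on the difference $w=u-v$ of two solutions. Continuity of the data-to-solution map and the continuity in time of $t\mapsto u(t)$ in $H^s$ follow from the same estimates applied to differences of frequency pieces, completing the proof on both $\K=\R$ and $\K=\T$; the torus case requires only cosmetic changes since the dyadic and modulation decompositions and Hypothesis~\ref{hyp1} are stated uniformly in the two geometries.
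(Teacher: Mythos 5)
Your outline reproduces the paper's strategy faithfully in the regular regime $s>1/2$: the frequency-localized energy estimate, the trade of one spatial derivative for a full unit of modulation regularity ($u\in X^{s-1,1}_T$), the resonance lower bound $L_{\max}\gtrsim N_{\min}N_{\max}^{\alpha}$ coming from Hypothesis \ref{hyp1}, and above all the decomposition of the sharp time cutoff into a low-modulation part and a high-modulation part at a threshold depending on the spatial frequencies, the high part being measured in $L^1_t$ and the low part interacting well with the modulation projections $Q_L$ — this is exactly Lemmas \ref{ihigh-lem}, \ref{ilow-lem} and \ref{lemtriest} of the paper.

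There is, however, a genuine gap precisely where the theorem has its main content, namely $1-\frac{\alpha}{2}\le s\le \frac{1}{2}$ (recall $1-\frac{\alpha}{2}<\frac{1}{2}$ exactly when $\alpha>1$). The ``basic fact'' you propose to use throughout — that any $L^\infty_TH^s$ solution lies in $X^{s-1,1}_T$ — is only provable (and in the paper only used) for $s>1/2$: its proof requires $\|\partial_x(u^2)\|_{X^{s-1,0}_T}\lesssim \|u^2\|_{L^2_TH^s}\lesssim\|u\|_{L^\infty_TH^{1/2+}}\|u\|_{L^\infty_TH^{s}}$, and below $s=1/2$ no $H^{1/2+}$ control of the solution is available; one only has $u^2\in H^{2s-\frac{1}{2}-}$, which is strictly worse than $H^s$. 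For this reason Section \ref{section4} of the paper replaces $X^{s-1,1}$ by the weaker space $F^{s,\alpha,1/2}=X^{s-\frac{\alpha+1}{2},1}+X^{s-\frac{1+\alpha}{8},\frac{5}{8}}$ (the sum structure being forced by the endpoint $(s,\alpha)=(0,2)$): the larger derivative loss $\frac{\alpha+1}{2}$ is exactly what the product estimate tolerates at $s\ge 1-\frac{\alpha}{2}$, and correspondingly the resonance gain $N_{\min}N_{\max}^{\alpha}$ is only just sufficient to repay it. Your scheme as written would not close below $s=1/2$ without this change of functional setting. Two further points are misattributed or elided: (i) the lower bound $T\gtrsim(1+\|u_0\|_{H^{1-\alpha/2}})^{-\frac{2(\alpha+1)}{2\alpha-1}}$ does not come from optimizing the cutoff threshold, but from the dilation $u_\lambda(t,x)=\lambda^{\alpha}u(\lambda^{\alpha+1}t,\lambda x)$ (which is also why Hypothesis \ref{hyp1} is formulated with the parameter $\lambda$) combined with a continuity argument; (ii) uniqueness is not obtained by ``the same trilinear estimates'' at level $s$: the difference $w=u-v$ must be estimated at the lower regularity $s-\frac{3}{2}+\frac{\alpha}{2}$, and in the non-regular case this forces the introduction of the weighted spaces $\overline{H}^\theta$ and $Z^\theta$, which put the weight $\langle|\xi|^{-1/2}\rangle$ on low frequencies — the low frequencies of $w$ genuinely obstruct the naive difference estimate.
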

 \begin{remark}\label{remark2}
 In the regular case (Cauchy problem in $H^s$ with $ s>1/2 $), we actually need \eqref{hyp2} only for $ |\xi_1|\wedge |\xi_2|\gg 1 $.
  \end{remark}

 \begin{remark}
 Our method also work in the case $ \alpha>2 $. In this case we  get the unconditional well-posedness in $ H^s(\K) $ for $ s\ge 0$
 .
  \end{remark}
\begin{remark}
In the appendix we indicate the small modifications that enable to obtain  the local-well posedness in the limit case $ (s,\alpha)=(1/2,1) $. However, in this  limit case we are not able to prove the unconditional uniqueness in $ L^\infty_T H^{1/2} $.
  \end{remark}
 \begin{remark}
For $ L_{\alpha+1}:=\partial_x^3 $, we recover  the unconditional LWP result in $  L^2(\T)  $ obtained in \cite{bit} for the KdV equation. However, our Lipschitz bound on the solution-map holds  at the level $ H^{-1/2} $ whereas in \cite{bit} it holds at the level $ H^{-1} $.
 Note that the $ L^2(\R) $ case was treated in \cite{zhou}.
  \end{remark}

Let us assume now that the symbol $p_{\alpha+1}$ satisfies moreover
\begin{equation}\label{hyp11}
 |p_{\alpha+1}(\xi)|\lesssim |\xi| \textrm{ for } |\xi|\le 1 \textrm{ and } |p_{\alpha+1}(\xi)| \sim |\xi|^{\alpha+1} \textrm{ for } |\xi|\ge 1.
\end{equation}
Then it is not too hard to check that equation (\ref{bo}) enjoys the following conservation laws:
\begin{align*}
 & \frac{d}{dt} \int_\K u^2 dx = 0,\\
 & \frac{d}{dt} \int_\K ( |\Lambda^{\alpha/2}u |^2 + \frac 13 u^3)dx = 0,
\end{align*}
where $\Lambda^{\alpha/2}$ is the space Fourier multiplier defined by
$$
\widehat{\Lambda^{\alpha/2} v}(\xi) = \left|\frac{p_{\alpha+1}(\xi)}{\xi}\right|^{1/2} \widehat{v}(\xi).
$$
Combined with the embedding $H^{\alpha/2} \hookrightarrow L^3$, we get an a priori bound of the $H^{\alpha/2}$-norm of the solution. We may then iterate Theorem \ref{theo1} and obtain the following corollary.
\begin{corollary}
 Let $ \K=\R $ or $ \T $, $ L_{\alpha+1} $ satisfying Hypothesis \ref{hyp1} and (\ref{hyp11}) with $ 1< \alpha\le 2 $ . Then the Cauchy problem associated
 with \eqref{bo} is unconditionally globally well-posed in $ H^{\alpha/2}(\K ) $  .
\end{corollary}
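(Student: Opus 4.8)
The plan is to upgrade the local result of Theorem \ref{theo1} to a global one by combining the two conservation laws with a standard continuation argument; the whole point is that the local existence time furnished by Theorem \ref{theo1} depends only on the $H^{1-\frac{\alpha}{2}}$-norm of the datum, which I shall control \emph{a priori} through the energy.

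First I would extract an a priori bound on the $H^{\alpha/2}$-norm. The first conservation law gives $\|u(t)\|_{L^2}=\|u_0\|_{L^2}$. Under \eqref{hyp11} one has $|p_{\alpha+1}(\xi)/\xi|\sim|\xi|^\alpha$ for $|\xi|\ge 1$ and $\lesssim 1$ for $|\xi|\le 1$, so that $\|\Lambda^{\alpha/2}v\|_{L^2}^2$ controls $\|v\|_{\dot H^{\alpha/2}}^2$ up to $\|v\|_{L^2}^2$. The second conservation law then reads $\|\Lambda^{\alpha/2}u(t)\|_{L^2}^2 = E(u_0)-\frac13\int_\K u(t)^3$, and it remains to absorb the cubic term. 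I would bound it via the one-dimensional Gagliardo--Nirenberg inequality $\|u\|_{L^3}^3\lesssim \|u\|_{\dot H^{\alpha/2}}^{1/\alpha}\|u\|_{L^2}^{3-1/\alpha}$ (which also invokes $H^{\alpha/2}\hookrightarrow L^3$, valid here since $\alpha/2>1/6$); because $\alpha>1/2$ the exponent $1/\alpha$ is strictly smaller than $2$, so Young's inequality lets me absorb a small multiple of $\|u\|_{\dot H^{\alpha/2}}^2$ and leaves only a power of $\|u_0\|_{L^2}$. This yields $\sup_t\|u(t)\|_{H^{\alpha/2}}\le M$ with $M=M(\|u_0\|_{H^{\alpha/2}})$. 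One must of course first justify that both conservation laws actually hold for the rough solutions produced by Theorem \ref{theo1}; the cleanest route is to establish them for smooth data and pass to the limit using continuity of the flow, both quantities being continuous on $H^{\alpha/2}$.

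Next I would verify that the local theory applies exactly at the energy regularity $s=\alpha/2$. Since $1<\alpha\le 2$ we have $1-\frac{\alpha}{2}<\frac{\alpha}{2}$, hence $\frac{\alpha}{2}\ge 1-\frac{\alpha}{2}$ and $H^{\alpha/2}\hookrightarrow H^{1-\frac{\alpha}{2}}$; moreover $(\frac{\alpha}{2},\alpha)\neq(\frac12,1)$ precisely because $\alpha>1$, so the excluded limiting case of Theorem \ref{theo1} is avoided. Consequently the problem is unconditionally locally well-posed in $H^{\alpha/2}(\K)$, with an existence time bounded below by $T\gtrsim(1+\|u_0\|_{H^{1-\frac{\alpha}{2}}})^{-\frac{2(\alpha+1)}{2\alpha-1}}\ge(1+M)^{-\frac{2(\alpha+1)}{2\alpha-1}}=:T_0>0$, the last inequality using the a priori bound.

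Finally the continuation: starting from $u_0$, Theorem \ref{theo1} produces a solution on $[0,T_0]$; taking $u(T_0)$ as new datum, whose $H^{1-\frac{\alpha}{2}}$-norm is again $\le M$, one re-solves on $[T_0,2T_0]$, and so on. Since each step has length at least $T_0$, which is independent of the iteration, the solution extends to all of $[0,\infty)$ (and likewise backward in time) while staying bounded in $H^{\alpha/2}$. Unconditional uniqueness and continuity of the flow on each compact time interval are inherited from the local statement, giving unconditional global well-posedness. I expect the only genuinely delicate point to be the justification of the conservation laws at the $H^{\alpha/2}$-level, i.e. the approximation and passage-to-the-limit step; the coercivity estimate and the iteration are then routine.
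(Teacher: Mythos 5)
Your proposal is correct and takes essentially the same route as the paper: the paper likewise derives the a priori $H^{\alpha/2}$ bound from the two conservation laws combined with the embedding $H^{\alpha/2}\hookrightarrow L^3$, and then iterates Theorem \ref{theo1}, whose existence time depends only on the $H^{1-\frac{\alpha}{2}}$-norm, which is controlled since $1-\frac{\alpha}{2}\le \frac{\alpha}{2}$ for $\alpha>1$. Your Gagliardo--Nirenberg absorption of the cubic term and the smooth-approximation justification of the conservation laws simply make explicit the details the paper leaves to the reader.
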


\begin{remark}
  The linear operators given in Remark \ref{remark-ex} also satisfy assumption (\ref{hyp11}).
\end{remark}

 It is well-known that gauge transform often do not well behave with respect to perturbation of the equation. On the other hand  it is well-known that,  taking into account some damping or dissipative effects, dissipative versions of \eqref{bo} can be derived (see for instance \cite{OS}, \cite{ER}).
 One quite direct application of the fact that we do not need a gauge transform to solve \eqref{bo}, is that we can easily treat  the dissipative limit  of  dissipative versions of \eqref{bo}. Such dissipative limit was for example studied for the Benjamin-Ono equation on the real line in
  \cite{guo} and \cite{Mo2}.

  Let us introduce the following dissipative version
 \begin{equation}\label{bob}
 u_t + L_{\alpha+1} u  +\varepsilon A_{\beta} u + u u_x =0
 \end{equation}
 where  $ \varepsilon>0 $ is a small parameter, $ \beta\ge 0 $ and $ A_{\beta} $ is a linear operator satisfying the following hypothesis :
 \begin{hypothesis}	\label{hypdis}
We assume that $ A_{\beta} $ is  the Fourier multiplier operator by $ q_{\beta} $ where  $ q_{\beta} $ is a real-valued  even function, bounded on bounded intervals, satisfying :
For all $ 0<\lambda\ll 1 $ and $ \xi \gg 1 $,
$$
\lambda^\beta q_\beta(\lambda^{-1} \xi) \sim |\xi|^\beta\, .
$$
 \end{hypothesis}
 \begin{remark}
 The homogeneous  operators $ D_x^\beta $ and the non homogeneous operators $J_x^\beta $ satisfy Hypothesis \ref{hypdis}.
 \end{remark}
\begin{theorem}\label{theo2}
 Let $ \K=\R $ or $ \T $, $ 1\le \alpha\le 2 $, $0\le \beta \le 1+\alpha $ and $ s\ge 1-\frac{\alpha}{2} $.
\begin{enumerate}
\item  Then the Cauchy problem  associated with \eqref{bob} is locally well-posed in $ H^s(\K) $.
\item For  $ u_0\in H^s(\K ) $,  let $ u $ be  solution
 to \eqref{bo} emanating from $ u_0$. We call $ T\gtrsim (1+\|u_0\|_{H^{1-\frac{\alpha}{2}}})^{-\frac{2(\alpha+1)}{2\alpha-1}}$ the maximal time of existence of $ u $ in
 $ H^s $. Then for $ \varepsilon>0 $ small enough, the maximal time of existence $ T_\varepsilon $ of
  the solution $ u_\varepsilon$ to
\eqref{bob} emanating from $ u_0$ satisfies $ T_\varepsilon\ge T $. Moreover, $ u_\varepsilon\to u $ in $C([0,T[; H^s)$ as $ \varepsilon\to 0 $ .
\end{enumerate}
\end{theorem}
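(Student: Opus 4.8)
The plan is to regard \eqref{bob} as a perturbation of \eqref{bo} and to check that the dissipative term $\varepsilon A_\beta u$ never degrades the estimates behind Theorem~\ref{theo1}, so that the whole scheme can be run uniformly in $\varepsilon\in(0,1]$. For assertion (1) I would reproduce, for each fixed $\varepsilon>0$, the two ingredients of the proof of Theorem~\ref{theo1}: the frequency-localized energy estimate \eqref{energy-est} and the Bourgain-type bilinear estimates on a fixed time interval. The only new contributions come from $\varepsilon A_\beta u$. In the energy identity, projecting on frequencies of size $N$ and pairing with $J^{2s}P_N u$ produces the extra term $-\varepsilon\int q_\beta(\xi)\langle\xi\rangle^{2s}|\widehat{P_Nu}|^2\,d\xi$. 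By Hypothesis~\ref{hypdis}, $q_\beta(\xi)\gtrsim|\xi|^\beta\geq0$ for $|\xi|\gg1$ while $|q_\beta|\lesssim1$ on bounded sets, so this term splits into a nonpositive (hence discardable) high-frequency part and a low-frequency remainder bounded by $\varepsilon\|u\|_{H^s}^2$, which Gronwall absorbs with a harmless factor $e^{C\varepsilon T}=O(1)$. In the dyadic bilinear estimates the dispersion relation \eqref{hyp2} is unchanged and the dissipation only enlarges the modulus of the resolvent symbol, since $|i(\tau+p_{\alpha+1}(\xi))+\varepsilon q_\beta(\xi)|\geq\max\big(|\tau+p_{\alpha+1}(\xi)|,\ \varepsilon q_\beta(\xi)\big)$; the restriction $\beta\leq\alpha+1$ guarantees that $\varepsilon q_\beta(\xi)\sim\varepsilon|\xi|^\beta$ stays below the natural modulation scale $\sim|\xi|_{\min}|\xi|_{\max}^\alpha$, so that every estimate of Theorem~\ref{theo1} holds with constants independent of $\varepsilon$. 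This yields (1) and, crucially, an $\varepsilon$-uniform bound $\|u_\varepsilon\|_{L^\infty_{T'}H^s}\leq C(\|u_0\|_{H^s})$ on the lifespan $T'\gtrsim(1+\|u_0\|_{H^{1-\alpha/2}})^{-2(\alpha+1)/(2\alpha-1)}$.

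For assertion (2), the uniform a priori bound combined with the standard continuation argument and the fact that the dissipation can only help first gives $T_\varepsilon\geq T$ for $\varepsilon$ small: on every $[0,T']\subset[0,T)$ where $u$ stays bounded in $H^s$ the uniform estimate keeps $u_\varepsilon$ bounded, so blow-up before $T$ is excluded. It remains to prove convergence. I would set $w_\varepsilon=u_\varepsilon-u$, which solves
\begin{equation*}
\partial_t w_\varepsilon + L_{\alpha+1}w_\varepsilon + \varepsilon A_\beta w_\varepsilon + \tfrac12\partial_x\big((u_\varepsilon+u)w_\varepsilon\big) = -\varepsilon A_\beta u,\qquad w_\varepsilon(0)=0,
\end{equation*}
and run the energy estimate for $w_\varepsilon$ not at the level $H^s$ but at a slightly lower level $H^{\sigma}$ with $\sigma<s$. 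There the source term is handled by Cauchy--Schwarz together with the good dissipative term produced by $\varepsilon A_\beta w_\varepsilon$: writing $\varepsilon|(A_\beta u,w_\varepsilon)_{H^{\sigma}}|\leq\tfrac{\varepsilon}{2}\int q_\beta\langle\xi\rangle^{2\sigma}|\widehat{w_\varepsilon}|^2+\tfrac{\varepsilon}{2}\int q_\beta\langle\xi\rangle^{2\sigma}|\hat u|^2$, the first integral is absorbed by the dissipation, while for $\sigma\leq s-\beta/2$ one has $q_\beta(\xi)\langle\xi\rangle^{2\sigma}\lesssim\langle\xi\rangle^{2s}$ so the second is $\lesssim\varepsilon\|u\|_{H^s}^2=O(\varepsilon)$. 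The nonlinear contribution, in which two of the three factors carry the high regularity $H^s$ (above the threshold $1-\alpha/2$) while only $w_\varepsilon$ is measured at the level $\sigma$, is estimated by the bilinear tools of Theorem~\ref{theo1} and yields a term $\lesssim(\|u_\varepsilon\|_{H^s}+\|u\|_{H^s})\|w_\varepsilon\|_{H^{\sigma}}^2$. A Gronwall argument then gives $\|w_\varepsilon\|_{L^\infty_TH^{\sigma}}=O(\varepsilon)$, i.e. $u_\varepsilon\to u$ in $C([0,T];H^{\sigma})$.

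Finally I would upgrade this to convergence in the strong norm $C([0,T];H^s)$ by a Bona--Smith type frequency splitting. For $M\geq1$ write $u_\varepsilon-u=P_{\leq M}(u_\varepsilon-u)+P_{>M}(u_\varepsilon-u)$. The low-frequency part satisfies $\|P_{\leq M}(u_\varepsilon-u)\|_{H^s}\lesssim_M\|u_\varepsilon-u\|_{H^{\sigma}}$, which tends to $0$ as $\varepsilon\to0$ for each fixed $M$ by the previous step. For the high-frequency part, $\|P_{>M}(u_\varepsilon-u)\|_{H^s}\leq\|P_{>M}u_\varepsilon\|_{H^s}+\|P_{>M}u\|_{H^s}$; the second term tends to $0$ as $M\to\infty$ since $u\in C([0,T];H^s)$, and the first is made small, uniformly in $\varepsilon$, by the equicontinuity estimate $\sup_{0<\varepsilon\leq1}\sum_{N>M}\|P_Nu_\varepsilon\|_{L^\infty_TH^s}^2\to0$ as $M\to\infty$. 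This last bound is obtained exactly as in Theorem~\ref{theo1}: summing the localized energy estimate over $N>M$, the data contribution $\sum_{N>M}\|P_Nu_0\|_{H^s}^2$ and the tail of the trilinear term both vanish, now \emph{uniformly} in $\varepsilon$ because all the constants were shown to be $\varepsilon$-independent. Combining the three bounds gives $\limsup_{\varepsilon\to0}\|u_\varepsilon-u\|_{L^\infty_TH^s}=0$.

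The main obstacle is precisely this last point: establishing the $\varepsilon$-uniform high-frequency equicontinuity $\sup_\varepsilon\sum_{N>M}\|P_Nu_\varepsilon\|_{L^\infty_TH^s}^2\to0$. This is the quantitative, summable-in-$N$ control of the trilinear energy term that already lies at the heart of Theorem~\ref{theo1}; the genuinely new task is to verify that every constant there remains independent of $\varepsilon\in(0,1]$, which is guaranteed by the two structural facts above, namely the favorable sign of the dissipation in the energy identity and the condition $\beta\leq\alpha+1$ keeping $\varepsilon q_\beta$ subordinate to the dispersive modulation.
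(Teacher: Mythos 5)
Your treatment of part (1) is in the right spirit (favorable sign of the dissipation in the energy identity, $\beta\le 1+\alpha$ as the source of $\varepsilon$-uniformity), but the mechanism you invoke for the Bourgain-type bounds is not the one that works: the $X^{s,b}$-norms remain attached to the purely dispersive symbol $\tau-p_{\alpha+1}(\xi)$, and the correct way to handle $\varepsilon A_\beta u$ there is as a forcing term in Duhamel's formula, producing $\varepsilon\|u\|_{L^2_TH^{s-\frac{1+\alpha}{2}+\beta}}$, which is then absorbed by the parabolic smoothing norm $\sqrt{\varepsilon}\,\|u\|_{L^2_TH^{s+\beta/2}}$ gained in the energy inequality (after the dilation, this is exactly where $\beta\le 1+\alpha$ enters); no statement about modulations being dominated by $\varepsilon q_\beta$ is needed or used. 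That is a fixable imprecision. The genuine gaps are in part (2).

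First, your assertion that $T_\varepsilon\ge T$ comes ``first'' is circular: to restart the uniform local theory at a time $t_0>0$ you need a bound on $\|u_\varepsilon(t_0)\|_{H^{1-\frac{\alpha}{2}}}$, which you can only get from $u_\varepsilon\approx u$, i.e. from the convergence you prove afterwards on $[0,T]$ --- and that proof already presupposes $u_\varepsilon$ exists on $[0,T]$. The paper fixes the order: convergence on one uniform local interval, then an iteration/contradiction argument (applying \eqref{j1} about $T^*/\delta$ times). Second, and more seriously, your convergence step compares the two \emph{rough} solutions directly at level $H^\sigma$ with $\sigma\le s-\beta/2$ and closes with Gronwall. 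Gronwall is not available at these regularities: the nonlinear term cannot be estimated pointwise in time (this failure below $s>3/2$ is the raison d'\^etre of the whole paper), so one must use the dissipative analogues of the Bourgain-space difference estimates, which are integral in time, measure $w_\varepsilon$ in $Z^\sigma$ rather than $H^\sigma$, and close by dilation-induced smallness, not Gronwall --- this is how \eqref{sz} is obtained. Worse, those difference estimates exist only down to the level $s-\frac32+\frac\alpha2\ge -\frac12$ (the $Z^{-1/2}$ floor), whereas your constraint $\sigma\le s-\beta/2$ forces $\sigma$ strictly below that floor whenever $\beta>3-\alpha$ (for instance $\alpha=2$, $\beta=3$, $s=0$ forces $\sigma\le -3/2$); and you cannot raise $\sigma$, because the source $\varepsilon A_\beta u$ involves the \emph{non-dissipative} solution $u$, which has no parabolic smoothing, so this source is $O(\varepsilon)$ only in norms at least $\beta/2$ derivatives below $H^s$. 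The paper sidesteps this dead end by never comparing rough solutions across the two equations: it smooths the data and compares $S_\varepsilon(P_{\le N}\varphi)$ with $S(P_{\le N}\varphi)$ by a classical $H^s$ energy estimate with the commutator bound \eqref{commute} and Gronwall --- legitimate because both solutions are smooth --- and then transfers to $\varphi$ via the equicontinuity \eqref{to4} of the maps $S_\varepsilon$ (proved by Bona--Smith inside the uniform theory; this is also essentially what your asserted-but-unproved uniform tail bound $\sup_\varepsilon\sum_{N>M}\|P_Nu_\varepsilon\|_{L^\infty_TH^s}^2\to 0$ would require) together with the continuity of the flow of \eqref{bo}.
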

\begin{remark}
The constraint $ \beta \le 1+\alpha $ is clearly an artefact of the method we used. We think that it could  be dropped by replacing, in some estimates, the dispersive Bourgain's spaces by dispersive-dissipative Bourgain's spaces that were first introduced in \cite{MR}. But since the dissipative operators involved in wave motions are commonly of order less or equal to $ 2$ we do not pursue this issue.
\end{remark}

The rest of the paper is organized as follows: in Section \ref{section2}, we introduce the notations, define the function spaces and state some preliminary lemmas. In Section \ref{section3} we develop our method in the simplest case $s>1/2$, while the non regular case is treated in Section \ref{section4}. Section \ref{section5} is devoted to the proof of Theorem \ref{theo2}. We conclude the paper with an appendix explaining how to deal with the special case $(s,\alpha)=(1/2,1)$.

\section{Notations, function spaces and preliminary lemmas}\label{section2}
\subsection{Notation}\label{sect21}
For any positive numbers $a$ and $b$, the notation $a \lesssim b$
means that there exists a positive constant $c$ such that $a \le c
b$. We also denote $a \sim b$ when $a \lesssim b$ and $b \lesssim
a$. Moreover, if $\alpha \in \mathbb R$, $\alpha_+$, respectively
$\alpha_-$, will denote a number slightly greater, respectively
lesser, than $\alpha$.

For $u=u(x,t) \in \mathcal{S}(\mathbb R^2)$,
$\mathcal{F}u=\widehat{u}$ will denote its space-time Fourier
transform, whereas $\mathcal{F}_xu=(u)^{\wedge_x}$, respectively
$\mathcal{F}_tu=(u)^{\wedge_t}$, will denote its Fourier transform
in space, respectively in time. For $s \in \mathbb R$, we define the
Bessel and Riesz potentials of order $-s$, $J^s_x$ and $D_x^s$, by
\begin{displaymath}
J^s_xu=\mathcal{F}^{-1}_x\big((1+|\xi|^2)^{\frac{s}{2}}
\mathcal{F}_xu\big) \quad \text{and} \quad
D^s_xu=\mathcal{F}^{-1}_x\big(|\xi|^s \mathcal{F}_xu\big).
\end{displaymath}

Throughout the paper, we fix a smooth cutoff function $\eta$ such that
\begin{displaymath}
\eta \in C_0^{\infty}(\mathbb R), \quad 0 \le \eta \le 1, \quad
\eta_{|_{[-1,1]}}=1 \quad \mbox{and} \quad  \mbox{supp}(\eta)
\subset [-2,2].
\end{displaymath}
We set  $ \phi(\xi):=\eta(\xi)-\eta(2\xi) $. For $l \in \mathbb Z$, we define
\begin{displaymath}
\phi_{2^l}(\xi):=\phi(2^{-l}\xi), \end{displaymath} and, for $ l\in \N^* $,
\begin{displaymath}
\psi_{2^{l}}(\xi,\tau)=\phi_{2^{l}}(\tau-p_{\alpha+1}(\xi)),
\end{displaymath}
where $ ip_{\alpha+1} $ is the Fourier symbol of $ L_{\alpha+1} $.
By convention, we also denote
\begin{displaymath}
\psi_{0}(\xi,\tau):=\eta(2(\tau-p_{\alpha+1}(\xi))).
\end{displaymath}
Any summations over capitalized variables such as $N, \, L$, $K$ or
$M$ are presumed to be dyadic with $N, \, L$, $K$ or $M > 0$,
\textit{i.e.}, these variables range over numbers of the form $\{2^k
: k \in \mathbb Z \}$. Then, we have that
\begin{displaymath}
\sum_{N}\phi_N(\xi)=1, \quad \mbox{supp} \, (\phi_N) \subset
\{\frac{N}{2}\le |\xi| \le 2N\}, \ N \ge 1, \quad \text{and} \quad
\mbox{supp} \, (\phi_0) \subset \{|\xi| \le 1\}.
\end{displaymath}
Let us define the Littlewood-Paley multipliers by
\begin{displaymath}
P_Nu=\mathcal{F}^{-1}_x\big(\phi_N\mathcal{F}_xu\big), \quad
Q_Lu=\mathcal{F}^{-1}\big(\psi_L\mathcal{F}u\big),
\end{displaymath}
 $P_{\ge N}:=\sum_{K \ge N} P_{K}$,  $P_{\le N}:=\sum_{K \le N} P_{K}$, $Q_{\ge L}:=\sum_{K \ge L} Q_{K}$ and   $Q_{\le L}:=\sum_{K \le L} Q_{K}$. For brevity we also write $u_N=P_Nu$, $u_{\le N}=P_{\le N}u$, ...

Let $\chi$ be a (possibly complex-valued) bounded function on $\R^2$ and define the pseudo-product operator $\Pi=\Pi_\chi$ on $\S(\R)^2$ by
$$
\F(\Pi(f,g))(\xi) = \int_{\R} \widehat{f}(\xi_1)\widehat{g}(\xi-\xi_1)\chi(\xi,\xi_1) d\xi_1.
$$
Throughout the paper, we write $\Pi=\Pi_\chi$ where $\chi$ may be different at each occurrence of $\Pi$.
This bilinear operator behaves like a product in the sense that it satisfies the following properties
$$\Pi(f,g) = fg \textrm{ if } \chi\equiv 1,$$
\begin{equation}\label{PiSym}
\int_\R \Pi(f,g)h = \int_\R f\Pi(g,h) = \int_\R \Pi(f,h)g
\end{equation}
for any $f,g,h\in \S(\R)$.
Moreover, we easily check from Bernstein inequality that if $f_i\in L^2(\R)$ has a Fourier transform localized in an annulus $\{|\xi|\sim N_i\}$, $i=1,2,3$, then
\begin{equation}\label{estPi}
\left|\int_\R \Pi(f_1,f_2)f_3\right|\lesssim N_{min}^{1/2}\prod_{i=1}^3\|f_i\|_{L^2},
\end{equation}
where the implicit constant only depends on $ \|\chi\|_{L^\infty(\R^2)} $ and $N_{min} = \min\{N_1,N_2,N_3\}$. With this notation in hand, we will be able to systematically estimate terms of the form
$$
\int_\R P_N(u^2)\partial_x P_N u
$$
to put the derivative on the lowest frequency factor.
\subsection{Function spaces} \label{spaces}
For $1 \le p \le \infty$, $L^p(\mathbb R)$ is the usual Lebesgue
space with the norm $\|\cdot\|_{L^p}$, and for $s \in \mathbb R$ ,
the real-valued Sobolev spaces $H^s(\mathbb R)$  denote the spaces of all real-valued functions with the usual
norms
\begin{displaymath}
\|\phi\|_{H^s}=\|J^s_x\phi\|_{L^2} \; .
\end{displaymath}
 If $B$
is one of the spaces defined above, $1 \le p \le \infty$, we will
define the space-time spaces $L^p_ t B$ and $ \widetilde{L^p_t} B $
 equipped with  the norms
\begin{displaymath}
\|f\|_{L^p_ t B} =\Big(\int_{\R}\|f(\cdot,t)\|_{B}^pdt\Big)^{\frac1p} ,
\end{displaymath}
with obvious modifications for $ p=\infty $,  and
\begin{displaymath}
\|f\|_{\widetilde{L^p_ t }B} =\Big(\sum_{N>0}
 \| P_N f \|_{L^p_ t B}^2\Big)^{\frac12} \; .
\end{displaymath}
For $s$, $b \in \mathbb R$, we introduce the Bourgain spaces
$X^{s,b}$ related to the linear part of \eqref{bo} as
the completion of the Schwartz space $\mathcal{S}(\mathbb R^2)$
under the norm
\begin{equation} \label{Bourgain}
\|v\|_{X^{s,b}} := \left(
\int_{\mathbb{R}^2}\langle\tau-p_{\alpha+1} (\xi)\rangle^{2b}\langle \xi\rangle^{2s}|\widehat{v}(\xi, \tau)|^2
d\xi d\tau \right)^{\frac12},
\end{equation}
where $\langle x\rangle:=1+|x|$ and $ i p_{\alpha+1} $ is Fourier symbol of $ L_{\alpha+1}$.
 Recall that $$ \|v\|_{X^{s,b}}= \| U_\alpha(-t) v \|_{H^{s,b}_{t,x}} $$ where $ U_\alpha(t)=\exp(t L_{\alpha+1}) $ is the generator of the free evolution associated with
  \eqref{bo}.\\
Finally, we will use restriction in time versions of these spaces.
Let $T>0$ be a positive time and $Y$ be a normed space of space-time functions. The restriction space $ Y_T $ will
 be the space of functions $v: \mathbb R \times
]0,T[\rightarrow \mathbb R$ satisfying
\begin{displaymath}
\|v\|_{Y_{T}}:=\inf \{\|\tilde{v}\|_{Y} \ | \ \tilde{v}: \mathbb R
\times \mathbb R \rightarrow \mathbb R, \ \tilde{v}|_{\mathbb R
\times ]0,T[} = v\}<\infty \; .
\end{displaymath}
\subsection{Preliminary lemmas}
\begin{lemma}\label{hyp}
Let $ p :\, \R \to \R $ be an odd function belonging to  $ C^1(\R) \cap C^2(\R^*) $ such that for all $ |\xi|\gg 1 $,
\begin{equation} \label{hyp5}
|p'(\xi)|\sim |\xi|^\alpha \quad \mbox{ and } \quad |p''(\xi)|\sim |\xi|^{\alpha-1} \, ,
\end{equation}
for some $ \alpha>0$.
Then  the Fourier multiplier  $L_{\alpha+1}$ by $ i \, p $ satisfies Hypothesis \ref{hyp1}.
\end{lemma}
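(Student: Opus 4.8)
The plan is to remove the scaling parameter $\lambda$ first, reducing \eqref{hyp2} to a scale-free two-sided bound, then to collapse everything to a single normalized frequency configuration by symmetry, and finally to estimate $\Omega$ through one integration of $p'$.

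\textbf{Reduction to a scale-free estimate.} Since the right-hand side of \eqref{hyp2} is homogeneous of degree $\alpha+1$, it suffices to find $\Lambda\ge 1$ such that $|\Omega(\eta_1,\eta_2)|\sim |\eta|_{\min}|\eta|_{\max}^\alpha$ whenever $|\eta|_{\max}\ge\Lambda$. Indeed, setting $\eta_i=\lambda^{-1}\xi_i$ we have $|\eta|_{\max}=\lambda^{-1}|\xi|_{\max}\ge|\xi_1|\gg 1\ge\Lambda$, and the homogeneity gives $|\eta|_{\min}|\eta|_{\max}^\alpha=\lambda^{-(\alpha+1)}|\xi|_{\min}|\xi|_{\max}^\alpha$, so multiplying the scale-free estimate by $\lambda^{\alpha+1}$ yields \eqref{hyp2} exactly. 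Next I would symmetrize: since $p$ is odd, $\Omega(\eta_1,\eta_2)=-(p(\eta_1)+p(\eta_2)+p(-\eta_1-\eta_2))$ is a symmetric function of the three frequencies summing to $0$, and is odd under $(\eta_1,\eta_2)\mapsto(-\eta_1,-\eta_2)$. Both operations preserve $|\Omega|$, $|\eta|_{\min}$ and $|\eta|_{\max}$, so I may assume the largest frequency is positive, equal to $N:=|\eta|_{\max}$. Its two companions must then both be $\le 0$ (otherwise the third one would exceed $N$ in modulus), hence equal to $-(N-n)$ and $-n$ with $n:=|\eta|_{\min}\le N/2$; in particular the middle frequency $N-n\ge N/2$ is comparable to $N$.

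\textbf{One integration.} In this normalized configuration $\Omega=p(N-n)-p(N)+p(n)=-\int_0^n h(\tau)\,d\tau$ with $h(\tau)=p'(N-n+\tau)-p'(\tau)$. The upper bound $|\Omega|\lesssim nN^\alpha=|\eta|_{\min}|\eta|_{\max}^\alpha$ is immediate from $|p'(\xi)|\lesssim\langle\xi\rangle^\alpha$ together with $N-n+\tau\sim N$ and $\tau\le n\le N$. For the lower bound it suffices to show that $h$ keeps a constant sign and obeys $|h(\tau)|\gtrsim N^\alpha$ on $[0,n]$, whence
\[
|\Omega|=\int_0^n |h(\tau)|\,d\tau\gtrsim nN^\alpha=|\eta|_{\min}|\eta|_{\max}^\alpha.
\]

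\textbf{The main obstacle.} The hard part is exactly this non-cancellation, because the contribution $p(n)$ can be of the same order $nN^\alpha$ as the main term, so a crude triangle inequality is not enough. The key remark is that $|p'(\xi)|\sim|\xi|^\alpha\to\infty$ forces $p'$ to be strictly monotone for large argument, so $p''$ keeps a constant sign $\varepsilon$ on $[\Lambda,\infty)$ with $\varepsilon=\mathrm{sgn}\,p'(+\infty)$. For $\tau\ge\Lambda$ both arguments $N-n+\tau>\tau$ lie in $[\Lambda,\infty)$, so $h(\tau)=\int_\tau^{N-n+\tau}p''$ has sign $\varepsilon$ and $|h(\tau)|\sim (N-n+\tau)^\alpha-\tau^\alpha\gtrsim N^\alpha$ by an elementary convexity estimate (treating $\alpha\ge1$ and $0<\alpha<1$ separately). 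For $\tau\le\Lambda$ I would instead use $|p'(\tau)|\le C_\Lambda$ together with $|p'(N-n+\tau)|\sim N^\alpha\gg C_\Lambda$ for $N$ large, so that $h(\tau)$ again has sign $\varepsilon$ and $|h(\tau)|\gtrsim N^\alpha$. This splitting at $|\xi|=\Lambda$ also neatly sidesteps the only regularity gap, namely that $p$ is merely $C^1$ (not $C^2$) at the origin: every argument of $p''$ occurring above stays $\ge\Lambda>0$. Combining the two regimes gives a constant sign and the uniform bound $|h|\gtrsim N^\alpha$ on all of $[0,n]$, which closes the lower bound and proves the lemma.
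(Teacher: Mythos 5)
Your proof is correct, but it is organized quite differently from the paper's. The paper keeps the scaling parameter $\lambda$ explicit throughout and runs a case analysis on the relative size and sign of $\xi_1,\xi_2$: the case $|\xi_2|\ll|\xi_1|$ is handled by the mean value theorem with $p(\lambda^{-1}\xi_2)$ treated as a small error, while the comparable case $|\xi_2|\gtrsim|\xi_1|$ (split again according to $\xi_1\xi_2\ge 0$ or $\xi_1\xi_2<0$) is handled by writing the difference of the $p'$'s as an integral of $p''$ of constant sign. You instead eliminate $\lambda$ at the outset by homogeneity, use the symmetry and oddness of $\Omega$ in the three frequencies $\eta_1,\eta_2,-\eta_1-\eta_2$ to collapse all of the paper's cases into the single configuration $\bigl(N,-(N-n),-n\bigr)$ with $0\le n\le N/2$, and then express $\Omega$ as the single integral $-\int_0^n\bigl(p'(N-n+\tau)-p'(\tau)\bigr)d\tau$, splitting in the integration variable $\tau$ at a fixed threshold $\Lambda$ rather than splitting in the frequencies. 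The analytic core is the same in both proofs --- constant sign of $p''$ on $[\Lambda,\infty)$ to prevent cancellation, plus boundedness of $p'$ near the origin against $|p'|\sim N^\alpha$ at frequencies of size $N$ --- but your normalization makes the argument shorter and unifies the paper's three cases; the paper's version trades this for keeping the $\lambda$-uniformity required by \eqref{hyp2} visible at every step.

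One sentence you should repair: ``$|p'(\xi)|\sim|\xi|^\alpha\to\infty$ forces $p'$ to be strictly monotone for large argument'' is not a valid implication on its own (a continuous function can tend to infinity in modulus without being eventually monotone). The fact you actually need --- that $p''$ has constant sign on $[\Lambda,\infty)$ --- follows, exactly as in the paper, from the second half of \eqref{hyp5}: $|p''(\xi)|\sim|\xi|^{\alpha-1}$ does not vanish there and $p''$ is continuous on $\R^*$, so the intermediate value theorem gives the constant sign. Once this is in place, the strict monotonicity of $p'$, the identification $\varepsilon=\mathrm{sgn}\,p'(+\infty)$ (using $|p'|\to\infty$), and the agreement of the sign of $h$ across your two regimes $\tau\le\Lambda$ and $\tau\ge\Lambda$ all follow, so the rest of your argument stands as written.
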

\begin{proof}
By symmetry we can assume $ |\xi_2|\le |\xi_1| $.  
We separate different cases:\\
{\bf 1.} $|\xi_2|\ll |\xi_1| $. Since $|\xi_1|\gg 1$, we can assume that (\ref{hyp5}) holds for any $\xi\ge |\xi_1|$ and thus there exists $ \theta\in [\xi_1,\xi_1+\xi_2] $ such that
\begin{align*}
 \lambda^{\alpha+1} \Bigr| p(\lambda^{-1}(\xi_1+\xi_2))-p(\lambda^{-1} \xi_1) \Bigl|
& = \lambda^\alpha |\xi_2||p'(\lambda^{-1} \theta)|  \\
&\sim \lambda^\alpha |\xi_2| |\lambda^{-1}\theta|^\alpha \\
&\sim  |\xi_2||\xi_1|^\alpha
\end{align*}
 for all $ 0\le \lambda\ll 1 $.
On the other hand, if $ \lambda^{-1} |\xi_2| \le |\xi_1| $ then
$$
\lambda^{\alpha+1}|p(\lambda^{-1} \xi_2) |\le  \lambda^\alpha |\xi_2| \max_{\xi\in[0,|\xi_1|]} |p'(\xi)|\ll   |\xi_2||\xi_1|^\alpha
$$
and if $  \lambda^{-1} |\xi_2| \ge |\xi_1| $ then
\begin{align*}
\lambda^{\alpha+1}|p(\lambda^{-1} \xi_2) |& = \lambda^{\alpha+1} |p(\xi_1)+p(\lambda^{-1} \xi_2) -p( \xi_1)|   \\
&\le \lambda^{\alpha+1}\Bigr(|\xi_1| \max_{\xi\in[0,|\xi_1|]} |p'(\xi)|+ \lambda^{-1} |\xi_2| | \lambda^{-1} \xi_2 |^\alpha \Bigl) \\
&\le   |\xi_2|^{\alpha+1} +\lambda^{\alpha}|\xi_2| \max_{\xi\in[0,|\xi_1|]} |p'(\xi) |\ll   |\xi_2||\xi_1|^\alpha
\end{align*}
Gathering these two estimates leads for $ 0<\lambda \ll 1 $ to
$$
\lambda^{\alpha+1} |\Omega(\lambda^{-1}\xi_1,\lambda^{-1} \xi_2) | \sim |\xi_2||\xi_1|^\alpha \, .
$$
{\bf 2.} $|\xi_2|\gtrsim |\xi_1| $. In this case we can assume that (\ref{hyp5}) holds for any $ \xi\ge |\xi_2| $. \\
{\bf 2.1.} $ \xi_1.\xi_2\ge 0$. Then we have $0< \xi_2\le \xi_1 <\xi_1+\xi_2$. We notice that
\begin{align*}
\lambda^{\alpha+1} |\Omega(\lambda^{-1}\xi_1,\lambda^{-1} \xi_2) |  = &
 \lambda^{\alpha} \int_{\lambda \xi_2}^{\xi_2} (p'(\lambda^{-1}(\xi_1+\theta))-p'(\lambda^{-1} \theta)) \, d\theta \\
& + \lambda^{\alpha+1}\Bigr(p(\lambda^{-1}\xi_1+\xi_2)-p(\lambda^{-1}\xi_1)\Bigl)-\lambda^{\alpha+1}p(\xi_2) \, .
\end{align*}
with
$$ |p(\lambda^{-1}\xi_1+\xi_2)-p(\lambda^{-1}\xi_1)| \lesssim \xi_2 \lambda^{-\alpha} \xi_1^\alpha\ll \lambda^{-\alpha-1} \xi_2 \xi_1^\alpha
$$
and
\begin{align*}
p'(\lambda^{-1}(\xi_1+\theta))-p'(\lambda^{-1} \theta)
& =  \lambda^{-1}
\int_{0}^{\xi_1} p''(\lambda^{-1}(\theta+\mu)) \, d\mu.
\end{align*}
But for $ \xi\ge \xi_2 $, $p''$ does not change sign since $ |p''(\xi)|\sim |\xi|^{\alpha-1} $ and $ p''$ is continuous outside $ 0$.
 Therefore,
 \begin{align*}
 \lambda^{-1} \int_{0}^{\xi_1} p''(\lambda^{-1}(\theta+\mu)) \, d\mu
 & \sim \lambda^{-1} \int_{0}^{\xi_1} (\lambda^{-1}(\theta+\mu))^{\alpha-1} \, d\mu \\
  & \sim \lambda^{-\alpha}\Bigl( (\xi_1+\theta)^\alpha-\theta^\alpha\Bigr) \\
 &  \sim \lambda^{-\alpha}\xi_1^\alpha
 \end{align*}
 Gathering these estimates we obtain
 $$
 \lambda^{\alpha+1} |\Omega(\lambda^{-1}\xi_1,\lambda^{-1} \xi_2) |  \sim \xi_2 \xi_1^\alpha \, .
 $$
 {\bf 2.2.} $\xi_1.\xi_2 <0 $. We can assume that $ \xi_1>0$. Then we have $ 0<\xi_1+\xi_2<-\xi_2\le \xi_1$.
  For $\xi_1+\xi_2\ll -\xi_2 $, recalling that $ p $ is an odd function, we can argue exactly as in the case  {\bf 1.} but with $ \xi_1+\xi_2$ ,  $-\xi_2$ and  $\xi_1$ playing the role of respectively
  $ \xi_2$, $\xi_1$ and $ \xi_1+\xi_2$. Finally, for $ \xi_1+\xi_2\gtrsim  -\xi_2 $, we argue exactly as in the case  {\bf 2.1} with the same exchange of roles  than above.
\end{proof}
\begin{lemma}\label{resonance.lem}
Assume that  $ p_{\alpha+1} $ satisfies  \eqref{hyp2} with $ \lambda=1 $.
Let $L_1,L_2,L_3 >0 $, $0<N_1\le N_2\le N_3 $ be dyadic numbers and $ u,v,w\in {\mathcal S}'(\R^2) $.
Then
$$
\int_{\R^2} \Pi(Q_{L_1}P_{N_1} u , \, Q_{L_2} P_{N_2} v) \, Q_{L_3} P_{N_3} w =0
$$
whenever the following relation is not satisfied :
\begin{equation}\label{resonance}
\max(L_1,L_2,L_3)  \sim \max( N_1 N_2^{\alpha}, L_{med})\; ,
\end{equation}
where $ L_{med}= \max(\{L_1,L_2,L_3\}-\{L_{\max}\})$.
\end{lemma}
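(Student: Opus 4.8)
The plan is to exploit the support properties of the three space-time Fourier multipliers and show that the integrand is forced to vanish unless the resonance relation \eqref{resonance} holds. First I would invoke the convolution structure of the pseudo-product $\Pi$ together with Plancherel: after expanding
\[
\int_{\R^2} \Pi(Q_{L_1}P_{N_1}u,\, Q_{L_2}P_{N_2}v)\, Q_{L_3}P_{N_3}w,
\]
the integral becomes a convolution integral in which the frequency and modulation variables of the three factors satisfy the two balance conditions
\[
\xi_1+\xi_2+\xi_3=0 \quad\text{and}\quad \tau_1+\tau_2+\tau_3=0,
\]
coming respectively from the spatial convolution and the time convolution (recall $w$ is real-valued, so one works with $\overline{\widehat{w}}$). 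Hence it suffices to prove that, on the intersection of the supports of $\psi_{L_1}(\xi_1,\tau_1)$, $\psi_{L_2}(\xi_2,\tau_2)$, $\psi_{L_3}(\xi_3,\tau_3)$ and the annuli $|\xi_i|\sim N_i$, these two constraints are incompatible whenever \eqref{resonance} fails.

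The key algebraic identity is that the sum of the three modulations recovers the resonance function. On each support we have $|\tau_i - p_{\alpha+1}(\xi_i)|\sim L_i$, and using $\sum\xi_i=0$, $\sum\tau_i=0$ together with the oddness of $p_{\alpha+1}$ one computes
\[
\tau_1+\tau_2+\tau_3 - \bigl(p_{\alpha+1}(\xi_1)+p_{\alpha+1}(\xi_2)+p_{\alpha+1}(\xi_3)\bigr)
= -\bigl(p_{\alpha+1}(\xi_1)+p_{\alpha+1}(\xi_2)+p_{\alpha+1}(\xi_3)\bigr),
\]
and since $\xi_3 = -(\xi_1+\xi_2)$ and $p_{\alpha+1}$ is odd, the right-hand side equals $\Omega(\xi_1,\xi_2)$ up to sign. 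Thus I would establish the pointwise identity
\[
(\tau_1 - p_{\alpha+1}(\xi_1)) + (\tau_2 - p_{\alpha+1}(\xi_2)) + (\tau_3 - p_{\alpha+1}(\xi_3)) = \Omega(\xi_1,\xi_2)
\]
on the support of the integrand. By Hypothesis \ref{hyp1} applied with $\lambda=1$ (which is exactly \eqref{hyp2} at $\lambda=1$, as assumed), the right-hand side satisfies $|\Omega(\xi_1,\xi_2)|\sim |\xi|_{\min}|\xi|_{\max}^\alpha \sim N_1 N_3^\alpha$ in the regime $N_1\le N_2\le N_3$ with $N_3\gg 1$; I would note here that $|\xi|_{\max}\sim N_3$ and $|\xi|_{\min}\sim N_1$ under the ordering of frequencies. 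Taking absolute values and using $L_i\sim |\tau_i - p_{\alpha+1}(\xi_i)|$, the triangle inequality forces $\max(L_1,L_2,L_3)\gtrsim N_1 N_3^\alpha$ and, by the usual "two largest must balance" argument, the largest modulation must be comparable to the maximum of $N_1 N_3^\alpha$ and the median modulation $L_{med}$. This is precisely \eqref{resonance}.

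The main obstacle I anticipate is purely bookkeeping rather than conceptual: one must be careful that Hypothesis \ref{hyp1} only provides the estimate on $\Omega$ when the \emph{largest} frequency is $\gg 1$, so I would first dispose of the low-frequency case $N_3\lesssim 1$ separately (there the support is compact and the claim is either trivial or the relation is automatically satisfiable). The other delicate point is to match the paper's ordering $N_1 N_2^\alpha$ in \eqref{resonance} against $N_1 N_3^\alpha$ coming from $|\xi|_{\min}|\xi|_{\max}^\alpha$; since under $N_1\le N_2\le N_3$ and the convolution constraint one always has $N_2\sim N_3$ (the two highest frequencies must be comparable when their sum is the lowest), this reconciles the two expressions. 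Verifying this frequency comparability $N_2\sim N_3$ from $\xi_1+\xi_2+\xi_3=0$ is the one genuinely necessary geometric step, and once it is in place the conclusion follows by a short contrapositive argument: if \eqref{resonance} fails then $|\Omega|$ cannot be matched by any modulation sum, so the supports are disjoint and the integral vanishes.
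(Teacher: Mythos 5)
Your proposal is correct and follows essentially the same route as the paper: the paper's (one-line) proof rests on exactly your key identity, namely that $\Omega(\xi_1,\xi_2)=\sigma(\tau_1+\tau_2,\xi_1+\xi_2)-\sigma(\tau_1,\xi_1)-\sigma(\tau_2,\xi_2)$ with $\sigma(\tau,\xi)=\tau-p_{\alpha+1}(\xi)$, combined with \eqref{hyp2} at $\lambda=1$ and the convolution constraints. Your additional bookkeeping (the reduction $N_2\sim N_3$ from $\xi_1+\xi_2+\xi_3=0$, the two-sided contrapositive forcing $L_{\max}\sim\max(N_1N_2^\alpha,L_{med})$, and the separate treatment of $N_3\lesssim 1$) only makes explicit what the paper leaves implicit.
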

\begin{proof}
This is a direct consequence of the hypothesis  \eqref{hyp2}  on  the resonance function $ \Omega(\xi_1,\xi_2)$ since
$$
\Omega(\xi_1,\xi_2)=\sigma(\tau_1+\tau_2,\xi_1+\xi_2) - \sigma(\tau_1,\xi_1) -\sigma(\tau_2,\xi_2)
$$
with  $ \sigma(\tau,\xi):=\tau-p_{\alpha+1}(\xi) $.
\end{proof}

\begin{lemma}\label{continuiteQ}
Let $ L>0 $, $ 1\le p \le \infty $ and $ s\in \R $.
The operator  $ Q_{\le L} $ is bounded  in $ L^p_t H^s $ uniformly in $ L>0$.
\end{lemma}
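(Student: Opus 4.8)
The plan is to show that $Q_{\le L}$ acts as convolution in the time variable (after conjugating by the free evolution) against a kernel whose $L^1_t$-norm is uniformly bounded, and then invoke the standard fact that convolution by an $L^1$ function is bounded on any $L^p$. First I would recall that, writing $U_\alpha(t)=\exp(tL_{\alpha+1})$, the multiplier $\psi_{\le L}(\xi,\tau)=\sum_{K\le L}\psi_K(\xi,\tau)$ depends on $(\xi,\tau)$ only through the combination $\tau-p_{\alpha+1}(\xi)$; indeed by construction $\psi_{\le L}(\xi,\tau)=\eta_L(\tau-p_{\alpha+1}(\xi))$ for a fixed bump function $\eta_L(\cdot)=\eta(2\cdot/L)$ obtained by summing the $\phi_{2^l}$ and $\psi_0$ pieces. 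This is precisely the statement that $U_\alpha(-t)Q_{\le L}U_\alpha(t)$ is the Fourier multiplier in time (only) by $\eta_L(\tau)$, so it equals convolution in $t$ with the kernel $k_L:=\F_t^{-1}(\eta_L)$.

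The second step is to control $\|k_L\|_{L^1_t}$ uniformly in $L$. Since $\eta_L(\tau)=\eta_1(\tau/L)$ is a rescaling of the fixed Schwartz bump $\eta_1=\F_t k_1$, its inverse Fourier transform satisfies $k_L(t)=L\,k_1(Lt)$, whence $\|k_L\|_{L^1_t}=\|k_1\|_{L^1_t}$ is a constant independent of $L$. Because $\eta_1\in C_0^\infty(\R)$, its inverse Fourier transform $k_1$ is Schwartz and in particular lies in $L^1(\R)$, so $\|k_1\|_{L^1_t}<\infty$.

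The third step assembles these: for fixed $x$ and $s$, apply the conjugation identity to get
\begin{equation*}
Q_{\le L}f = U_\alpha(t)\bigl(k_L *_t (U_\alpha(-\cdot)f)\bigr),
\end{equation*}
and then use Minkowski's integral inequality in the $t$-variable together with the fact that $U_\alpha(t)$ is an isometry on $H^s_x$ (it is a unitary Fourier multiplier of modulus one) to obtain
\begin{equation*}
\|Q_{\le L}f\|_{L^p_t H^s_x}\le \|k_L\|_{L^1_t}\,\|U_\alpha(-\cdot)f\|_{L^p_t H^s_x}=\|k_1\|_{L^1_t}\,\|f\|_{L^p_t H^s_x},
\end{equation*}
which is the desired bound with a constant independent of $L$. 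The endpoint cases $p=1$ and $p=\infty$ are handled by the same Young/Minkowski inequality.

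I expect the only genuinely delicate point to be the bookkeeping that justifies writing $\psi_{\le L}$ as a clean function $\eta_L(\tau-p_{\alpha+1}(\xi))$ of the modulation variable alone, including the low-frequency convention $\psi_0(\xi,\tau)=\eta(2(\tau-p_{\alpha+1}(\xi)))$; once this reduction to a time-convolution operator is in place, everything else is the standard Young-inequality argument. A secondary subtlety is that the spatial conjugation $U_\alpha(\pm t)$ must commute with the Bessel potential $J^s_x$, which it does since both are $x$-Fourier multipliers; this is what lets the $H^s_x$-norm pass through the unitary flow unchanged.
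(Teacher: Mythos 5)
Your proof is correct and follows essentially the same route as the paper: the conjugation identity $Q_{\le L}u = U_\alpha(t)\bigl(R_{\le L}\,U_\alpha(-t)u\bigr)$ with $R_{\le L}$ the time Fourier multiplier by $\eta_L$, combined with unitarity of $U_\alpha$ on $H^s_x$. The only difference is that you spell out explicitly the Young/Minkowski step and the scaling-invariant $L^1_t$ bound on the kernel $k_L$, which the paper leaves implicit in its final ``$\lesssim$''.
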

\begin{proof}
Let $ R_{\le L} $ be the Fourier multiplier by $ \eta_L(\tau) $ where $ \eta_L $ is defined in Section \ref{sect21}.
The trick is to notice that $Q_{\le L} u = U_\alpha(t) (R_{\le L} U_\alpha(-t) u )$. Therefore, using the unitarity of $ U_\alpha(\cdot) $ in
  $ H^s(\R) $ we infer that
\begin{align*}
\|Q_{\le L} u \|_{L^p_t H^s} &= \|U_\alpha(t) (R_{\le L} U_\alpha(-t) u ) \|_{L^p_t H^s}
  = \|R_{\le L} U_\alpha(-t) u  \|_{L^p_t H^s}\\
& \lesssim  \| U_\alpha (-t) u  \|_{L^p_t H^s}
 = \|u  \|_{L^p_t H^s} \; .
\end{align*}
\end{proof}
For any $T>0$, we consider $1_T$ the characteristic function of $[0,T]$ and use the decomposition
\begin{equation}\label{ind-dec}
1_T = 1_{T,R}^{low}+1_{T,R}^{high},\quad \widehat{1_{T,R}^{low}}(\tau)=\eta(\tau/R)\widehat{1_T}(\tau)
\end{equation}
for some $R>0$.\\
The properties of this decomposition we will need are listed in the following lemmas.
\begin{lemma}\label{ihigh-lem} For any $ R>0 $ and $ T>0 $ it holds
\begin{equation}\label{high}
\|1_{T,R}^{high}\|_{L^1}\lesssim T\wedge R^{-1}.
\end{equation}
and 
\begin{equation}\label{low}
\|1_{T,R}^{low}\|_{L^\infty}\lesssim  1.
\end{equation}
\end{lemma}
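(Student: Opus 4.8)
The plan is to realize the low-frequency part as a convolution with a fixed Schwartz kernel and reduce both estimates to elementary moment bounds on that kernel. Set $g:=\F_t^{-1}\eta$, which is a Schwartz function since $\eta\in C_0^\infty$. The defining relation $\widehat{1_{T,R}^{low}}(\tau)=\eta(\tau/R)\widehat{1_T}(\tau)$ says precisely that $1_{T,R}^{low}=k_R\ast 1_T$, where $k_R(t):=R\,g(Rt)$ is the inverse Fourier transform of $\eta(\cdot/R)$. By the scaling invariance of the $L^1$-norm and a change of variables, we have uniformly in $R>0$
\begin{equation*}
\|k_R\|_{L^1}=\|g\|_{L^1}\lesssim 1,\qquad \int_\R k_R=\eta(0)=1,\qquad \int_\R |s|\,|k_R(s)|\,ds=R^{-1}\|s\,g(s)\|_{L^1}\lesssim R^{-1},
\end{equation*}
the finiteness of the first moment coming from $s\,g(s)\in L^1$, and the normalization $\int k_R=1$ being equivalent to the mean-zero property $\int_\R 1_{T,R}^{high}=0$.

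The bound \eqref{low} is then immediate from Young's inequality:
\begin{equation*}
\|1_{T,R}^{low}\|_{L^\infty}=\|k_R\ast 1_T\|_{L^\infty}\le \|k_R\|_{L^1}\,\|1_T\|_{L^\infty}=\|g\|_{L^1}\lesssim 1.
\end{equation*}

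For \eqref{high} I would prove the two bounds $\|1_{T,R}^{high}\|_{L^1}\lesssim T$ and $\|1_{T,R}^{high}\|_{L^1}\lesssim R^{-1}$ separately and take the minimum. The first follows from the triangle inequality together with Young's inequality in $L^1$, since
\begin{equation*}
\|1_{T,R}^{high}\|_{L^1}=\|1_T-k_R\ast 1_T\|_{L^1}\le \|1_T\|_{L^1}+\|k_R\|_{L^1}\|1_T\|_{L^1}\le (1+\|g\|_{L^1})\,T\lesssim T.
\end{equation*}
For the second bound I would exploit the normalization $\int k_R=1$ to rewrite the high part as an averaged translation difference,
\begin{equation*}
1_{T,R}^{high}(t)=(1_T-k_R\ast 1_T)(t)=\int_\R k_R(s)\bigl(1_T(t)-1_T(t-s)\bigr)\,ds,
\end{equation*}
and then invoke the $L^1$ modulus of continuity of an indicator, $\|1_T-1_T(\cdot-s)\|_{L^1}\le 2|s|$, to get
\begin{equation*}
\|1_{T,R}^{high}\|_{L^1}\le \int_\R |k_R(s)|\,\|1_T-1_T(\cdot-s)\|_{L^1}\,ds\le 2\int_\R |s|\,|k_R(s)|\,ds\lesssim R^{-1}.
\end{equation*}

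The argument is entirely routine; the only point deserving care is the bookkeeping of the scaling $k_R(t)=R\,g(Rt)$, which is what converts the finiteness of the zeroth and first moments of the \emph{fixed} Schwartz function $g$ into, respectively, the $R$-uniform $L^1$-bound behind \eqref{low} and the first-half of \eqref{high}, and the $R^{-1}$-decay of the first moment behind the second half of \eqref{high}. One should also note that the whole scheme only uses $\eta\in C_0^\infty$ with $\eta(0)=1$, so no finer property of the cutoff is needed.
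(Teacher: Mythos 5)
Your proof is correct and follows essentially the same route as the paper's: both rest on writing $1_{T,R}^{high}$ as the averaged translation difference $\int k_R(s)\bigl(1_T(t)-1_T(t-s)\bigr)\,ds$ (using $\int k_R=\eta(0)=1$) and bounding the indicator's translation defect. The only cosmetic difference is that the paper keeps the bound $T\wedge |s|/R$ on the symmetric difference inside a single integral, whereas you split \eqref{high} into the two bounds $\lesssim T$ (via the triangle inequality) and $\lesssim R^{-1}$ (via the first moment of the kernel) and take the minimum at the end.
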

\begin{proof} A direct computation provide
\begin{align*}\notag
\|1_{T,R}^{high}\|_{L^1} &= \int_\R \left|\int_\R (1_T(t)-1_T(t-s/R))\F^{-1}\eta(s)ds\right|dt\\
\notag &\le \int_\R\int_{([0,T]\setminus [s/R,T+s/R])\cup ([s/R,T+s/R]\setminus [0,T])}|\F^{-1}\eta(s)|dtds\\
\notag &\lesssim \int_\R (T\wedge |s|/R) |\F^{-1}\eta(s)|ds\\
\label{indT-h}&\lesssim T\wedge R^{-1}.
\end{align*}
Finally, the proof of \eqref{low} is obvious.
\end{proof}
\begin{lemma}\label{ilow-lem}
Let $u\in L^2(\R^2)$. Then for any $T>0$, $R>0$  and $ L \gg R $ it holds
$$
\|Q_L (1_{T,R}^{low}u)\|_{L^2}\lesssim \|Q_{\sim L} u\|_{L^2}
$$
\end{lemma}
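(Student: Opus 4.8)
The plan is to exploit the fact that $1_{T,R}^{low}$ is, by construction, localized at \emph{time} frequencies $\lesssim R$, so that multiplying a function by it can only shift the modulation variable $\tau-p_{\alpha+1}(\xi)$ by an amount $\lesssim R$, which is negligible compared with $L$ when $L\gg R$. First I would compute the space-time Fourier transform of $v:=1_{T,R}^{low}u$. Since $1_{T,R}^{low}$ depends on $t$ only, the spatial frequency $\xi$ is untouched and the product turns into a convolution in the $\tau$ variable: at fixed $\xi$,
$$
\widehat{v}(\xi,\tau)=\int_\R \widehat{1_{T,R}^{low}}(\sigma)\,\widehat{u}(\xi,\tau-\sigma)\,d\sigma .
$$
By the definition \eqref{ind-dec}, $\widehat{1_{T,R}^{low}}(\sigma)=\eta(\sigma/R)\widehat{1_T}(\sigma)$ is supported in $\{|\sigma|\le 2R\}$, so only the values $\tau-\sigma$ with $|\sigma|\le 2R$ enter the integral.

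The decisive observation is then the following. Writing $m(\xi,\tau):=\tau-p_{\alpha+1}(\xi)$, the shift $\tau\mapsto\tau-\sigma$ (at fixed $\xi$) changes $m$ by exactly $-\sigma$, hence by at most $2R$ in absolute value on the support of the convolution kernel. Consequently, if $(\xi,\tau)$ lies in the support of $\psi_L(\xi,\tau)=\phi_L(m(\xi,\tau))$, so that $|m(\xi,\tau)|\in[L/2,2L]$, then every contributing $\tau-\sigma$ satisfies $|m(\xi,\tau-\sigma)|\in[L/2-2R,\,2L+2R]$. Fixing the implicit constant in $L\gg R$ so that $2R\le L/4$, this interval is contained in $J:=[L/4,3L]$. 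I would then let $Q_{\sim L}:=\sum_{M\sim L}Q_M$ denote the sum over the (finitely many) dyadic $M$ comparable to $L$, with the range chosen so that $\sum_{M\sim L}\phi_M\equiv 1$ on $J$, and prove the key identity $Q_L(1_{T,R}^{low}u)=Q_L(1_{T,R}^{low}Q_{\sim L}u)$: on the support of $\phi_L(m)$ one may replace $\widehat{u}(\xi,\tau-\sigma)$ by $\widehat{Q_{\sim L}u}(\xi,\tau-\sigma)$ inside the integral, since these agree whenever $|m(\xi,\tau-\sigma)|\in J$, while the complementary piece $(\mathrm{Id}-Q_{\sim L})u$ has modulation supported off $J$ and therefore contributes nothing.

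Finally I would conclude by crude estimates. Since $\phi_L$ is bounded by $1$, Plancherel shows that $Q_L$ is a contraction on $L^2(\R^2)$, so the key identity together with the $L^\infty$ bound \eqref{low} yields
$$
\|Q_L(1_{T,R}^{low}u)\|_{L^2}=\|Q_L(1_{T,R}^{low}Q_{\sim L}u)\|_{L^2}\le \|1_{T,R}^{low}Q_{\sim L}u\|_{L^2}\le \|1_{T,R}^{low}\|_{L^\infty}\,\|Q_{\sim L}u\|_{L^2}\lesssim \|Q_{\sim L}u\|_{L^2}.
$$
The only delicate point is the bookkeeping of the modulation intervals needed to identify $Q_{\sim L}$ and to verify the support disjointness of $(\mathrm{Id}-Q_{\sim L})u$ after the shift of size $\le 2R$; this is precisely where the hypothesis $L\gg R$ is used, and no genuine analytic difficulty arises beyond it.
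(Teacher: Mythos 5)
Your proof is correct and follows essentially the same route as the paper: both arguments realize multiplication by $1_{T,R}^{low}$ as a convolution in $\tau$ whose kernel $\eta(\cdot/R)\,\widehat{1_T}$ is supported in $\{|\sigma|\lesssim R\}$, deduce that for $L\gg R$ only the modulation blocks of $u$ with $L_1\sim L$ can contribute to $Q_L(1_{T,R}^{low}u)$, and conclude with the trivial $L^2$-boundedness of $Q_L$ together with \eqref{low}. The paper organizes the vanishing as a dyadic sum over $L_1$ whose off-diagonal terms ($L_1\ll L$ or $L_1\gg L$) are zero, whereas you phrase it as the identity $Q_L(1_{T,R}^{low}u)=Q_L(1_{T,R}^{low}Q_{\sim L}u)$; this is only a cosmetic difference.
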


\begin{proof}
By Plancherel we get
\begin{align*}
I_{L} &=\|Q_L(1_{T,R}^{low}u)\|_{L^2} \\
&=\|\varphi_L(\tau-\omega(\xi))\widehat{1_{T,R}^{low}}\ast_\tau \widehat{u}(\tau,\xi)\|_{L^2} \\
&= \left\|\sum_{L_1}\varphi_L(\tau-\omega(\xi))\int_\R \varphi_{L_1}(\tau'-\omega(\xi))\widehat{u}(\tau',\xi)\eta((\tau-\tau')/R)\frac{e^{-iT(\tau-\tau')}-1}{\tau-\tau'}d\tau'\right\|_{L^2}.
\end{align*}
In the region where $L_1\ll L$ or $L_1\gg L$, we have $|\tau-\tau'|\sim L\vee L_1\gg R$, thus $I_L$ vanishes.
On the other hand, for  $L\sim L_1$, we get
$$
I_L \lesssim \sum_{L\sim L_1} \|Q_L(1_{T,R}^{low}Q_{L_1}u)\|_{L^2}
\lesssim \|Q_{\sim L}u\|_{L^2}.
$$
\end{proof}

\section{Unconditional well-posedness in the regular case $ s>1/2$}\label{section3}
In this section we develop our method in the regular case $ s>1/2$. This will emphasize the simplicity of this approach to prove unconditional well-posedness for equation \eqref{bo} posed on $ \R $ or $\T $.

Let $ \lambda>0 $ and $ L^\lambda_{\alpha+1}$ be the Fourier multiplier by $ i  \lambda^{\alpha+1} p_{\alpha+1}(\lambda^{-1} \cdot) $. We  notice that if $ u $ is solution to \eqref{bo} on $ ]0,T[ $ then
$ u_\lambda(t,x)=\lambda^{\alpha} u(\lambda^{\alpha+1}t, \lambda x) $ is solution to  \eqref{bo} on $ ]0,\lambda^{-(\alpha+1)} T[ $ with $ L_{\alpha+1} $ replaced by $ L^\lambda_{\alpha+1}$. Therefore, up to this change of unknown and equation, we can always assume that the operator
 $ L_{\alpha+1} $  verifies \eqref{hyp2} with $0< \lambda \le 1$.

\subsection{A priori estimates}
For $ s\in \R $ we define the function space $ M^s $ as $M^s:= L^\infty_t H^s \cap X^{s-1,1} $, endowed with the natural norm
$$
\|u\|_{M^s}= \|u\|_{L^\infty_t H^s}+\|u\|_{X^{s-1,1}} \; .
$$
For $ u_0\in H^s(\R) $, $s>1/2$, we will construct a solution to \eqref{bo} in $ M^s_T $, whereas the difference of two solutions emanating from initial data belonging to $ H^s(\R) $ will take place in $ M^{s-1}_T $.
\begin{lemma}\label{lem1}
Let $ 0<T<2$, $ s>1/2 $  and $ u\in L^\infty_T H^s $  be a solution to \eqref{bo}.
Then $ u\in M^s_T$ and it holds
\begin{equation}\label{estXregular}
\|u\|_{M^s_T}  \lesssim \|u\|_{L^\infty_T H^s} + \|u\|_{L^\infty_T H^s} \|u\|_{L^\infty_T H^{\frac{1}{2}+}}\;.
\end{equation}
Moreover, for any couple $(v,w) \in   (L^\infty_T H^s)^2 $ of solutions to \eqref{bo}, it holds
\begin{equation}\label{estdiffXregular}
\|u-v\|_{M^{s-1}_T}  \lesssim \|u-v\|_{L^\infty_T H^{s-1}} +  \|u+v\|_{L^\infty_T H^{s}}  \|u-v\|_{L^\infty_T H^{s-1}} \; .
\end{equation}
\end{lemma}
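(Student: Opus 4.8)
The plan is to conjugate by the free group and thereby reduce the whole statement to flat Sobolev estimates in the modulation variable. Recall from \eqref{Bourgain} that $\|u\|_{X^{s-1,1}}=\|U_\alpha(-t)u\|_{H^{s-1,1}_{t,x}}$; writing $v:=U_\alpha(-t)u$ and using $\langle\tau\rangle\sim 1+|\tau|$ gives the equivalence
$$
\|u\|_{X^{s-1,1}}\sim \|v\|_{L^2_tH^{s-1}}+\|\partial_t v\|_{L^2_tH^{s-1}}.
$$
The crux is that, since $U_\alpha(-t)$ commutes with the linear part, differentiating $v$ in time and inserting \eqref{bo} yields the identity
$$
\partial_t v = -\tfrac12\,U_\alpha(-t)\,\partial_x(u^2),
$$
so that, $U_\alpha(-t)$ being unitary on each $H^\sigma$, the time regularity of $v$ is dictated by the nonlinearity at the cost of exactly one spatial derivative. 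This is the ``lose one derivative in $\xi$, gain one in the modulation'' mechanism announced in the introduction.

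To convert the resulting bounds on $[0,T]$ into the restriction norm $X^{s-1,1}_T$ with constants uniform in $0<T<2$, I would fix $\rho\in C_0^\infty(\R)$ with $\rho\equiv 1$ on $[0,2]$, set $\widetilde v(t):=u_0+\int_0^t 1_T(t')\,\partial_t v(t')\,dt'$ (which agrees with $v$ on $[0,T]$ and is constant outside), and take $\widetilde u:=\rho\,U_\alpha(\cdot)\widetilde v$ as extension of $u$. Since $U_\alpha(-t)\widetilde u=\rho\widetilde v$, we have $\|\widetilde u\|_{X^{s-1,1}}=\|\rho\widetilde v\|_{H^{s-1,1}_{t,x}}$, and one is left to bound $\|\rho\widetilde v\|_{L^2_tH^{s-1}}$, $\|\rho'\widetilde v\|_{L^2_tH^{s-1}}$ and $\|\rho\,1_T\,\partial_t v\|_{L^2_tH^{s-1}}$. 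The first two are $\lesssim \sup_{[0,T]}\|v\|_{H^{s-1}}=\sup_{[0,T]}\|u\|_{H^{s-1}}\lesssim \|u\|_{L^\infty_TH^s}$, while the last equals $\tfrac12\|\partial_x(u^2)\|_{L^2_TH^{s-1}}\lesssim \|u^2\|_{L^\infty_TH^s}$, and the product rule
$$
\|u^2\|_{H^s}\lesssim \|u\|_{H^s}\|u\|_{H^{\frac12+}}
$$
(the Kato--Ponce fractional Leibniz estimate together with $H^{\frac12+}\hookrightarrow L^\infty$) closes the bound. Adding the trivial $\|u\|_{L^\infty_TH^s}$ coming from the $L^\infty_tH^s$ factor of $M^s$ gives \eqref{estXregular}.

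For \eqref{estdiffXregular} I would run the identical scheme on $w:=u-v$, which solves $w_t+L_{\alpha+1}w+\tfrac12\partial_x\big((u+v)w\big)=0$, so that $\partial_t(U_\alpha(-t)w)=-\tfrac12U_\alpha(-t)\partial_x((u+v)w)$ and everything reduces to the bilinear estimate
$$
\|(u+v)w\|_{H^{s-1}}\lesssim \|u+v\|_{H^s}\|w\|_{H^{s-1}},
$$
which holds precisely because $s>1/2$ (the endpoint conditions for $H^s\cdot H^{s-1}\hookrightarrow H^{s-1}$ are then met with room to spare). I expect the only genuine obstacle to be the extension/uniformity step: one must verify that the time extension carries no constant blowing up as $T\to 0$ and, more importantly, justify the identity $\partial_t v=-\tfrac12U_\alpha(-t)\partial_x(u^2)$ for a solution merely in $L^\infty_TH^s$. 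For the latter I would first note $\partial_x(u^2)\in L^\infty_TH^{s-1}$ by the algebra property of $H^s$, whence $u_t\in L^\infty_TH^{s-\alpha-1}$, so that all the manipulations above are legitimate as identities between $H^{s-1}$-valued distributions in time. The remaining product estimates are classical.
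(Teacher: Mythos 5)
Your proof is correct, and its analytic core is the same as the paper's: both arguments rest on the fact that, for a solution of \eqref{bo}, the full modulation weight in $X^{s-1,1}$ can be bought at the price of one spatial derivative, since $(\partial_t+L_{\alpha+1})u=-\tfrac12\,\partial_x(u^2)$, and both close with the identical Sobolev product estimates ($\|u^2\|_{H^s}\lesssim\|u\|_{H^s}\|u\|_{H^{\frac12+}}$, and $H^s\cdot H^{s-1}\hookrightarrow H^{s-1}$, valid precisely for $s>1/2$). The genuine difference is the implementation of the linear step. The paper quotes two black boxes: the boundedness on $X^{\theta,b}_T$, $b\le 1$, of the Masmoudi--Nakanishi extension operator $\rho_T$ of \eqref{defrho}, and the standard Duhamel estimate $\|u\|_{X^{s-1,1}_T}\lesssim \|u_0\|_{H^{s-1}}+\|\partial_x(u^2)\|_{X^{s-1,0}_T}$. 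You instead re-derive this linear input by hand: the conjugation $v=U_\alpha(-t)u$ identifies the $X^{s-1,1}$ norm with $\|v\|_{L^2_tH^{s-1}}+\|\partial_t v\|_{L^2_tH^{s-1}}$, and your extension (freeze $v$ outside $[0,T]$, then multiply by a fixed cutoff, which commutes with the conjugation) yields the restriction-norm bound with constants manifestly uniform in $0<T<2$. Since this conjugation argument is essentially how the cited linear estimates are proved, the two routes coincide at bottom; yours buys self-containedness and makes the ``lose a derivative in $\xi$, gain one in the modulation'' mechanism completely explicit, while the paper's buys brevity and a reusable tool ($\rho_T$ reappears verbatim in Lemma \ref{YY} and Proposition \ref{prop-wF}). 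Your two points of care --- justifying $\partial_t v=-\tfrac12\,U_\alpha(-t)\partial_x(u^2)$ distributionally for $u$ merely in $L^\infty_TH^s$, and noting that $s>1/2$ is exactly what the bilinear estimate $\|zw\|_{H^{s-1}}\lesssim\|z\|_{H^s}\|w\|_{H^{s-1}}$ requires --- are the right ones, and both are handled correctly.
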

\begin{proof}
We have to extend the function $ u$ from $ (0,T) $ to $ \R $. For this we follow \cite{MN} and introduce the extension operator $ \rho_T $ defined by
\begin{equation}\label{defrho}
\rho_T u(t):= \eta(t) u(T \mu(t/T))\; ,
\end{equation}
where $ \eta $ is the smooth cut-off function defined in Section \ref{sect21} and $ \mu(t)=\max(1-|t-1|,0) $. $ \rho_T $ is a bounded operator from
 $ X^{\theta,b}_T $ into $ X^{\theta,b} $ and from $ L^p(0,T;X) $ into $ L^p(\R;X) $ for any $ b\in ]-\infty,1]$, $ s\in \R $, $ p\in [1,\infty] $ and any Banach space
  $ X$. Moreover, these bounds are uniform for $ 0<T<1 $. \\
  By using this extension operator, it is clear that we only have to estimate the $ X^{s-1,1}_T $-norm of $ u $ to prove \eqref{estXregular}. But by the Duhamel formula of \eqref{bo} and  standard linear estimates in Bourgain's spaces, we have
  \begin{eqnarray*}
  \|u\|_{X^{s-1,1}_T} & \lesssim &  \|u_0\|_{H^{s-1}}+\| \partial_x(u^2) \|_{X^{s-1,0}_T} \\
  & \lesssim &  \|u_0\|_{H^{s-1}}+\|u^2 \|_{L^2_T H^s} \\
 &  \lesssim   &  \|u_0\|_{H^{s-1}}+ \|u\|_{L^\infty_T H^{\frac{1}{2}+}} \| u \|_{L^\infty_T H^s}  \; ,
 \end{eqnarray*}
by standard product estimates in Sobolev spaces.\\
In the same way we have
  \begin{eqnarray*}
  \|u-v\|_{X^{s-2,1}_T}
  & \lesssim &  \|u_0\|_{H^{s-2}}+\|(u+v)(u-v) \|_{L^2_T H^{s-1}} \\
 &  \lesssim   &  \|u_0\|_{H^{s-2}}
 +  \|u+v\|_{L^\infty_T H^{s}} \| u-v \|_{L^\infty_T H^{s-1}} \; ,
 \end{eqnarray*}
 which proves \eqref{estdiffXregular}
\end{proof}

\begin{lemma}\label{lemtriest}
Assume $u_i\in M^0$, $i=1,2,3$ are functions with spatial Fourier support in $\{|\xi|\sim N_i\}$ with $N_i>0$ dyadic satisfying $N_1\le N_2\le N_3$. For any $t>0$ we set
$$
I_t(u_1,u_2,u_3) = \int_0^t\int_\R \Pi(u_1,u_2)u_3.
$$
If $N_1\le 2^9$ it holds
\begin{equation}\label{estitlow}
|I_t(u_1,u_2,u_3)|\lesssim N_1^{1/2} \|u_1\|_{L^\infty_tL^2_x} \|u_2\|_{L^2_{tx}}\|u_3\|_{L^2_{tx}}.
\end{equation}
In the case $N_1>2^9$ it holds
\begin{align*}
|I_t(u_1,u_2,u_3)|\lesssim & N_1^{-1/2}N_3^{1-\alpha} \|u_1\|_{L^\infty_tL^2_x}(\|u_2\|_{L^2_{tx}} \|u_3\|_{X^{-1,1}} + \|u_2\|_{X^{-1,1}} \|u_3\|_{L^2_{tx}})\\
&+ N_1^{1/2}N_3^{-\alpha} \|u_1\|_{X^{-1,1}} \|u_2\|_{L^2_{tx}} \|u_3\|_{L^\infty_tL^2_x}\\
&+ N_1^{-1}N_3^{-1/8} \|u_1\|_{L^\infty_tL^2_x} \|u_2\|_{L^\infty_tL^2_x} \|u_3\|_{L^\infty_tL^2_x}.
\end{align*}
\end{lemma}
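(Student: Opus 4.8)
The plan is to isolate the harmless low-frequency regime from the resonant high-frequency one, the only real difficulty being the sharp time cut-off $1_{]0,t[}$. When $N_1\le 2^9$ no dispersive information is needed: writing $I_t(u_1,u_2,u_3)=\int_\R 1_{]0,t[}(s)\big(\int_\R \Pi(u_1,u_2)u_3\,dx\big)\,ds$, applying the bilinear bound \eqref{estPi} at each fixed time $s$ and then Hölder in time (with $u_1$ in $L^\infty_tL^2_x$ and $u_2,u_3$ in $L^2_{tx}$) yields \eqref{estitlow} at once.

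Assume now $N_1>2^9$. I would first note that the integral forces $\xi_1+\xi_2+\xi_3=0$ on the frequency supports, so $|\xi_3|\le|\xi_1|+|\xi_2|$ gives $N_2\sim N_3$ and the resonance threshold in \eqref{resonance} becomes $N_1N_2^\alpha\sim N_1N_3^\alpha=:L^*$. Placing the cut-off on the last factor, $I_t=\int_{\R^2}\Pi(u_1,u_2)(1_{]0,t[}u_3)$, I split $1_{]0,t[}=1_{t,R}^{low}+1_{t,R}^{high}$ as in \eqref{ind-dec} with $R\sim N_1^{3/2}N_3^{1/8}$. For the high part, $1_{t,R}^{high}(s)$ factors out of the $x$-integral, so \eqref{estPi} together with \eqref{high} gives $|I_t^{high}|\lesssim N_1^{1/2}\|1_{t,R}^{high}\|_{L^1}\prod_i\|u_i\|_{L^\infty_tL^2_x}\lesssim N_1^{1/2}R^{-1}\prod_i\|u_i\|_{L^\infty_tL^2_x}$, and the choice of $R$ converts this into the third term $N_1^{-1}N_3^{-1/8}\prod_i\|u_i\|_{L^\infty_tL^2_x}$.

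For the low part I would decompose the three factors into modulation pieces $Q_{L_i}$ and invoke Lemma \ref{resonance.lem}, by which the integral vanishes unless $L_{\max}\gtrsim L^*$; I then branch on which factor carries $L_{\max}$. On the high-modulation factor I trade $L^2_{tx}$ for $X^{-1,1}$ through $\|Q_{\sim L}P_Nw\|_{L^2}\sim L^{-1}N\|Q_{\sim L}P_Nw\|_{X^{-1,1}}$ and sum $\sum_{L\gtrsim L^*}L^{-1}$ by Cauchy--Schwarz, which produces the gain $N(L^*)^{-1}$; the two remaining factors are measured in $L^\infty_tL^2_x$ and $L^2_{tx}$ and combined via \eqref{estPi}. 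When $L_{\max}$ sits on $u_3$ the cut-off lies on the same high-modulation factor, so Lemma \ref{ilow-lem} replaces $Q_{L_3}(1_{t,R}^{low}u_3)$ by $Q_{\sim L_3}u_3$ and the gain $N_3(L^*)^{-1}=N_1^{-1}N_3^{1-\alpha}$ produces the first summand of the first term; when $L_{\max}$ is on $u_2$ the cut-off rides a factor kept in $L^2_{tx}$, so \eqref{low} and Lemma \ref{continuiteQ} remove it and give the second summand. Finally, when $L_{\max}=L_1$ one uses $X^{-1,1}$ on $u_1$ (gain $N_1(L^*)^{-1}=N_3^{-\alpha}$) while keeping $u_3$ in $L^\infty_tL^2_x$ and $u_2$ in $L^2_{tx}$, which is exactly the second term.

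The main obstacle is the coupling between the cut-off splitting and the modulation analysis: Lemma \ref{ilow-lem} is available only when the modulation exceeds $R$, so I must verify that the chosen $R\sim N_1^{3/2}N_3^{1/8}$ stays well below the resonance level $L^*=N_1N_3^\alpha$. This is precisely where the hypotheses $\alpha\ge1$ and $N_1>2^9$ are used, since $R/L^*=N_1^{1/2}N_3^{1/8-\alpha}\le N_3^{1/2-7/8}<1$ forces every resonant modulation $L_3\gtrsim L^*$ to satisfy $L_3\gg R$. Balancing this constraint against the requirement that the high-frequency term carry a negative power of $N_3$ is the one genuinely delicate point; the remaining trilinear bookkeeping is routine.
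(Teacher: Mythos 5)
Your proposal is correct and takes essentially the same route as the paper's proof: the same splitting $1_{]0,t[}=1^{low}_{t,R}+1^{high}_{t,R}$ at the same threshold $R=N_1^{3/2}N_3^{1/8}$, the bound \eqref{estPi} combined with \eqref{high} for the high part, and Lemma \ref{resonance.lem} together with Lemmas \ref{ilow-lem} and \ref{continuiteQ} to trade the resonant modulation $\gtrsim N_1N_3^{\alpha}$ for the $X^{-1,1}$ norm, including the same verification that $R\ll N_1N_3^{\alpha}$. The only (immaterial) difference is that you attach the cut-off to the highest-frequency factor $u_3$ whereas the paper attaches it to the lowest-frequency factor $u_1$ (see \eqref{decIt} and \eqref{AA}), which merely permutes which of the three modulation branches invokes Lemma \ref{ilow-lem} versus \eqref{low} and Lemma \ref{continuiteQ}.
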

\begin{proof}
Estimate (\ref{estitlow}) easily follows from (\ref{estPi}) together with H\"older inequality, thus it suffices to estimate $|I_t|$ for $N_1>2^9$.
Note that $I_t$ vanishes unless $N_2\sim N_3$. Setting $R=N_1^{3/2}N_3^{1/8}$, we split $I_t$ as
\begin{align}
I_t(u_1,u_2,u_3) &= I_\infty(1_{t,R}^{high}u_1,u_2, u_3) + I_\infty(1_{t,R}^{low}u_1,u_2, u_3) \nonumber \\
& :=I_t^{high} + I_t^{low}, \label{decIt}
\end{align}
where $I_\infty(u,v,w) = \int_{\R^2}\Pi(u,v)w$. The contribution of $I_t^{high}$ is estimated thanks to Lemma \ref{ihigh-lem} as well as (\ref{estPi}) and H\"older inequality by
\begin{align}
I_t^{high} &\lesssim N_1^{1/2} \|1_{t,R}^{high}\|_{L^1} \|u_1\|_{L^\infty_tL^2_x} \|u_2\|_{L^\infty_tL^2_x}\|u_3\|_{L^\infty_tL^2_x} \label{estIthigh} \\
&\lesssim N_1^{-1}N_3^{-1/8} \|u_1\|_{L^\infty_tL^2_x} \|u_2\|_{L^\infty_tL^2_x}\|u_3\|_{L^\infty_tL^2_x} \nonumber
\end{align}
To evaluate the contribution $ I_t^{low} $ we use that, according to  Lemma \ref{resonance.lem}, we have the following decomposition:
\begin{align}
I_\infty(1_{t,R}^{low} \, u_1,u_2, u_3)  =  & I_\infty(Q_{>2^{-4}N_1 N_3^\alpha}( 1_{t,R}^{low}u_1),u_2,  u_3)\nonumber \\
& +I_\infty(Q_{\le 2^{-4}N_1 N_2^\alpha} ( 1_{t,R}^{low}u_1), Q_{>2^{-4}N_1 N_3^\alpha}  u_2, u_3 )\nonumber \\
& + I_\infty(Q_{\le 2^{-4}N_1 N_2^\alpha}  ( 1_{t,R}^{low}u_1), Q_{\le 2^{-4}N_1 N_3^\alpha}  u_2, Q_{\sim N_1 N_3^\alpha}u_3)
\label{AA}\; .
\end{align}
It is worth noticing that since $ N_1\ge 2^9 $,  $R \ll 2^{-4} N_1 N_3^\alpha $.
Therefore the  contribution  $  I_t^{1,low} $ of the first term of the above RHS  to $I_t^{low} $ is easily estimated thanks to Lemma \ref{ilow-lem} by
\begin{align}
 I_t^{1,low}    \lesssim &  N_1^{1/2}(N_1N_3^\alpha)^{-1} \|u_1 \|_{X^{0,1} } \|u_2 \|_{L^2_{tx}} \|u_3 \|_{L^\infty_t L^2_x} \nonumber \\
\lesssim & N_1^{1/2}N_3^{-\alpha} \|u_1 \|_{X^{-1,1} } \|u_2 \|_{L^2_{tx}} \|u_3 \|_{L^\infty_t L^2_x}\; .
\end{align}
Thanks to Lemmas \ref{continuiteQ} and \ref{ilow-lem}, the contribution $ I_t^{2,low} $ of the second term can be handle in the following way
  \begin{align}
 I_t^{2,low} \lesssim &   N_1^{1/2}(N_1 N_3^\alpha)^{-1}
 \|u_1 \|_{L^\infty_t L^2_x } \|u_2 \|_{X^{0,1}} \|u_3 \|_{L^\infty_tL^2_x} \nonumber \\
   \lesssim & N_1^{-1/2}N_3^{1-\alpha} \|u_1 \|_{L^\infty_t L^2_x } \|u_2 \|_{X^{-1,1}} \|u_3 \|_{L^2_{tx}} \; .
\end{align}
Finally the contribution of the third term is estimated in the same way.
\end{proof}
\begin{remark}
From (\ref{PiSym}) we see that estimates in Lemma \ref{lemtriest} also hold for any other rearrangements of $N_1, N_2, N_3$.
\end{remark}
We are now in position to derive our  ``improved" energy estimate on smooth solutions to \eqref{bo}.
\begin{proposition}\label{prou}
Let $ 0<T<2$ and $ u\in L^\infty_T H^s $ with $ s>1/2 $ be a solution to \eqref{bo}. Then it holds
\begin{equation}\label{estHsregular}
\|u\|_{L^\infty_TH^s}  \lesssim \|u_0\|_{H^s}  +  (1+\|u\|_{L^\infty_T  H^{\frac{1}{2}+}}^2) \|u\|_{L^\infty_T H^{\frac{1}{2}+}} \|u\|_{L^\infty_T H^s} \;.
\end{equation}
\end{proposition}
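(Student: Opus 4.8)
The plan is to run a Littlewood--Paley localized energy estimate and, crucially, to factor out one full power of $\|u\|_{L^\infty_T H^s}$ from the cubic term, so that the resulting inequality is \emph{linear} in the top norm.

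First I would justify \eqref{energy-est}. Applying $P_N$ to \eqref{bo} and taking the $L^2$ inner product of the result against $J_x^{2s}P_N u$, the linear contribution $\langle L_{\alpha+1}P_N u, J_x^{2s}P_N u\rangle$ vanishes: indeed $p_{\alpha+1}$ is real and odd, so $L_{\alpha+1}$ is skew-adjoint and the integrand $i p_{\alpha+1}(\xi)\langle\xi\rangle^{2s}|\widehat{P_N u}|^2$ is odd in $\xi$. Writing $uu_x=\tfrac12\partial_x(u^2)$ and integrating in time then yields, since every factor is localized at frequency $\sim N$,
\[
\|P_N u\|_{L^\infty_T H^s}^2\lesssim\|P_N u_0\|_{H^s}^2+\langle N\rangle^{2s}\sup_{t\in]0,T[}\Bigl|\int_0^t\int_\R\partial_x P_N(u^2)\,P_N u\Bigr|,
\]
which is \eqref{energy-est}. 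The whole game is then to bound the trilinear term by $c_N\,\|P_N u\|_{L^\infty_T H^s}$ with $\{c_N\}\in\ell^2_N$. Given this, the elementary implication $a_N^2\lesssim b_N^2+c_N a_N\Rightarrow a_N\lesssim b_N+c_N$, applied with $a_N=\|P_N u\|_{L^\infty_T H^s}$ and $b_N=\|P_N u_0\|_{H^s}$, followed by summation in $\ell^2_N$ (using $\|u\|_{L^\infty_T H^s}^2\lesssim\sum_N\|P_N u\|_{L^\infty_T H^s}^2$), produces \eqref{estHsregular}; this factorization is what makes the bound linear in $\|u\|_{L^\infty_T H^s}$ and explains why the cubic term can yield a first-degree estimate.

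To produce such a $c_N$ I would expand $u^2$ into dyadic pieces and recast the trilinear integral as a sum of the forms $I_t(u_{N_1},u_{N_2},u_{N_3})$ of Lemma~\ref{lemtriest}, over dyadic $N_1\le N_2\le N_3$ with the two highest frequencies comparable (the only surviving configuration, by the pointwise constraint $\xi_1+\xi_2+\xi_3=0$ on the support). Using this zero-sum relation together with the symmetry \eqref{PiSym}, the explicit derivative is transferred onto the lowest-frequency factor (the high--high part being absorbed by the symmetry of the trilinear form), so that the multiplier defining $\Pi$ remains bounded while a weight $\sim N_1$ is extracted. I would then split the energy weight as $\langle N\rangle^{2s}\sim\langle N_2\rangle^s\langle N_3\rangle^s$ over the two high factors and, exploiting the rearrangement freedom noted in the remark following Lemma~\ref{lemtriest}, always arrange the distinguished factor $P_N u$ to carry an $L^\infty_T L^2$ norm, so that $\langle N\rangle^s\|P_N u\|_{L^\infty_T L^2}=a_N$ is exactly the factor pulled out.

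Finally I would apply Lemma~\ref{lemtriest} case by case and sum. For $N_1\le2^9$ estimate \eqref{estitlow} suffices and the low factor is harmless, bounded by $\|u\|_{L^\infty_T H^{\frac{1}{2}+}}$. For $N_1>2^9$ the three bounds carry the decays $N_1^{-1/2}N_3^{1-\alpha}$, $N_1^{1/2}N_3^{-\alpha}$ and $N_1^{-1}N_3^{-1/8}$: here $\alpha\ge1$ forces the $N_3$-powers to be non-positive, so the diagonal sum over $N_2\sim N_3$ converges, while $s>\tfrac12$ is precisely what makes $\sum_{N_1}N_1^{1/2}\|P_{N_1}u\|_{L^2}\lesssim\|u\|_{L^\infty_T H^{\frac{1}{2}+}}$ summable. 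Converting the $X^{-1,1}$ factors into $L^\infty_T H^s$ via Lemma~\ref{lem1}, $\|u\|_{X^{s-1,1}_T}\lesssim\|u\|_{L^\infty_T H^s}(1+\|u\|_{L^\infty_T H^{\frac{1}{2}+}})$, generates the polynomial prefactor $(1+\|u\|_{L^\infty_T H^{\frac{1}{2}+}}^2)$, and collecting everything gives $\|c\|_{\ell^2_N}\lesssim(1+\|u\|_{L^\infty_T H^{\frac{1}{2}+}}^2)\|u\|_{L^\infty_T H^{\frac{1}{2}+}}\|u\|_{L^\infty_T H^s}$. The step I expect to be the real obstacle is this combined high-frequency bookkeeping: simultaneously moving the derivative to the lowest frequency, splitting $\langle N\rangle^{2s}$ so that exactly one clean factor $a_N$ is freed in $L^\infty_T L^2$, and checking the $\ell^2_N$-summability of the remainder at the borderline where both $s>\tfrac12$ and $\alpha\ge1$ are sharp.
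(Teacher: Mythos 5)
Your proposal follows the paper's proof in all of its main lines: the localized energy estimate \eqref{energy-est} (with the correct justification that the dispersive term drops out by skew-adjointness), the dyadic decomposition of $u^2$ with a commutator-type device to place the derivative on the lowest-frequency factor, the case analysis through Lemma \ref{lemtriest} with the same exponents, and the conversion of $X^{s-1,1}$ norms via Lemma \ref{lem1}. Where you genuinely depart from the paper is the device you present as the crux: bounding the trilinear term at each \emph{fixed} $N$ by $c_N a_N$ with $a_N=\langle N\rangle^s\|P_N u\|_{L^\infty_T L^2_x}$, running $a_N^2\lesssim b_N^2+c_Na_N\Rightarrow a_N\lesssim b_N+c_N$ frequency by frequency, and then summing in $\ell^2_N$. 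That step contains a genuine gap.

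The gap is the claim that you can ``always arrange the distinguished factor $P_N u$ to carry an $L^\infty_T L^2$ norm.'' The remark after Lemma \ref{lemtriest} (based on \eqref{PiSym}) only says that the estimates are insensitive to which argument slot of $I_t$ the various frequencies occupy; it does \emph{not} let you choose which factor absorbs the dominant modulation. By the resonance relation \eqref{resonance}, at least one of the three factors must have modulation $\gtrsim N_{\min}N_{\max}^{\alpha}$, and it is that factor -- whichever one it happens to be -- that must be estimated in $X^{-1,1}$: this is precisely how the dispersive gain enters, and it is why the three bounds of Lemma \ref{lemtriest} place the $X^{-1,1}$ norm on three different factors. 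In the high--high configuration, where $P_N u$ is the low-frequency factor interacting with $u_{N_1}u_{\sim N_1}$, $N_1\gg N$, nothing prevents the dominant modulation from sitting on $P_N u$ itself; this is the second bound of the lemma, $N^{1/2}N_1^{-\alpha}\|u_N\|_{X^{-1,1}}\|u_{\sim N_1}\|_{L^2_{tx}}\|u_{N_1}\|_{L^\infty_t L^2_x}$, in which $P_N u$ appears \emph{only} through $\|P_N u\|_{X^{-1,1}}$ and in no other norm (unlike the low--high configuration, where $P_N u$ occurs twice and the second copy can indeed supply $a_N$). Dyadically, $\|P_N u\|_{X^{s-1,1}}$ is not controlled by $\|P_N u\|_{L^\infty_T H^s}$: Lemma \ref{lem1} is a global statement proved through the Duhamel formula, and even its frequency-localized version bounds $\|P_N u\|_{X^{s-1,1}_T}$ by $\|P_N u_0\|_{H^{s-1}}+\langle N\rangle^{s-1}N\|P_N(u^2)\|_{L^2_{T,x}}$, a quantity mixing all frequencies of $u$. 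So for these terms the factorization $\lesssim c_N a_N$ is unavailable and the frequency-by-frequency implication cannot be run. The repair is exactly the paper's organization: sum over \emph{all} dyadic indices first, reassembling the offending $X^{-1,1}$ pieces by Cauchy--Schwarz into the global norms $\|u\|_{M^{s}_T}$ and $\|u\|_{M^{1/2+}_T}$, which yields \eqref{est1}; then apply the quadratic inequality $a^2\lesssim b^2+Ka\Rightarrow a\lesssim b+K$ to the summed norms, and only at the end invoke \eqref{estXregular} to write $\|u\|_{M^s_T}\lesssim(1+\|u\|_{L^\infty_T H^{1/2+}})\|u\|_{L^\infty_T H^s}$. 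The linearity of \eqref{estHsregular} in $\|u\|_{L^\infty_T H^s}$ comes from this global step, not from a dyadic factorization.
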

\begin{proof}
Applying the operator $ P_N $ with $ N>0  $ dyadic  to equation \eqref{bo}, taking the $H^s$ scalar product with $ P_N u $ and integrating
 on $ ]0,t[ $  we obtain
\begin{equation}
\|P_N u\|^2_{L^\infty_T H^s} \lesssim \|P_N u_0\|_{H^s}^2 + \sup_{t\in ]0,T[}  \langle N\rangle^{2s} \Bigl| \int_0^t\int_{\R}  P_N(u^2) \partial_xP_N u \Bigr|
\end{equation}
Thus it remains to estimate
\begin{equation}\label{defI}
 I :=\sum_{N>0}  \langle N\rangle^{2s}\sup_{t\in ]0,T[}  \Bigl| \int_0^t\int_{\R}  P_N(u^2) \partial_xP_N u \Bigr| \; .
\end{equation}
We take an extension  $\tilde{u} $ of $ u $ supported in time in $ ]-2,2[ $  such that $ \|\tilde{u}\|_{M^s} \lesssim \|u\|_{M^s_T} $ . To simplify the notation we drop the tilde in the sequel.\\
By localization considerations, we get
\begin{equation}\label{decdy}
P_N(u^2) = P_N(u_{\gtrsim N}u_{\gtrsim N}) + 2P_N(u_{\ll N}u).
\end{equation}
Moreover, using a Taylor expansion of $\phi_N$, we easily get
\begin{equation}\label{taylorphi}
P_N(u_{\ll N}u) = u_{\ll N} P_Nu + N^{-1}\Pi(\partial_x u_{\ll N}, u),
\end{equation}
where $\Pi=\Pi_\chi$ with $\chi(\xi,\xi_1)=-i\int_0^1\phi'(N^{-1}(\xi-\theta\xi_1))d\theta\in L^\infty$. Inserting (\ref{decdy})-(\ref{taylorphi}) into (\ref{defI}) and integrating by parts we deduce
\begin{align*}
I \lesssim & \sum_{N>0} \sum_{N_1\ll N} N_1 \cro{N}^{2(s-1)} \sup_{t\in ]0,T[} \left| \int_0^t\int_\R \Pi_{\chi_1}(u_{N_1}, u_N) u_N\right|\\
& + \sum_{N>0} \sum_{N_1\ll N} N_1 \cro{N}^{2(s-1)} \sup_{t\in ]0,T[} \left| \int_0^t\int_\R \Pi_{\chi_2}(u_{N_1}, u_{\sim N}) u_N\right|\\
& + \sum_{N>0} \sum_{N_1\gtrsim N} N \cro{N}^{2(s-1)} \sup_{t\in ]0,T[} \left| \int_0^t\int_\R \Pi_{\chi_3}(u_{N_1}, u_{\sim N_1}) u_N\right|
\end{align*}
where $\chi_i$, $1\le i\le 3$ are bounded uniformly in $N,N_1$ and defined by
\begin{align}
\chi_1(\xi,\xi_1) &= \frac{\xi_1}{N_1}1_{\supp \phi_{N_1}}(\xi_1), \label{chi1} \\
\chi_2(\xi,\xi_1) &= \chi(\xi,\xi_1) \frac{\xi_1}{N_1} \frac{\xi}{N} \frac{1_{\supp \phi_N}(\xi)1_{\supp \phi_{N_1}}(\xi_1)}{\phi_{\sim N}(\xi-\xi_1)} \label{chi2} \\
\chi_3(\xi,\xi_1) &= \frac{\xi}{N}\phi_N(\xi) \label{chi3}.
\end{align}
Recalling now the definition of $I_t$ (see Lemma \ref{lemtriest}), it follows that
\begin{equation}\label{estI1}
 I \lesssim  \sum_{N>0}\sum_{N_1\gtrsim  N}  N\langle N_1\rangle^{2s}\sup_{t\in ]0,T[} |I_t(u_N, u_{\sim N_1}, u_{N_1})|.
  \end{equation}
The contribution of the sum over $ N \le 2^9 $ is easily estimated thanks to (\ref{estitlow}) and Cauchy-Schwarz by
\begin{align}
\sum_{N\le 2^9 }\sum_{N_1\gtrsim N} & N \langle N_1\rangle^{2s}\,\|u_N \|_{L^\infty_t L^2_x}
\|u_{N_1}\|_{L^2_t L^2_x}^2 \nonumber \\
& \lesssim \|u\|_{L^\infty_t L^2_x} \|u\|_{L^\infty_t H^s}^2 \label{estI2}
\end{align}
Finally the contribution of the sum over $N>2^9$ is controlled with the second part of Lemma \ref{lemtriest} by
\begin{align}
\sum_{N>2^9} \sum_{N_1\gtrsim N} NN_1^{2s} &\Big[ N^{-1/2}N_1^{1-\alpha} \|u_N\|_{L^\infty_tL^2_x} \|u_{N_1}\|_{L^2_{tx}} \|u_{N_1}\|_{X^{-1,1}} \nonumber \\
& \quad + N^{1/2}N_1^{-\alpha} \|u_N\|_{X^{-1,1}} \|u_{N_1}\|_{L^\infty_tL^2_x}^2 \nonumber\\
& \quad + N^{-1}N_1^{-1/8} \|u_N\|_{L^\infty_tL^2_x} \|u_{N_1}\|_{L^\infty_tL^2_x}^2 \Big] \nonumber \\
&\lesssim \|u\|_{M^{\frac{1}{2}+}_T}  \|u\|_{M^{s}_T} \|u\|_{L^\infty_T H^s}.
\end{align}
Gathering all the above estimates  leads to
\begin{eqnarray}
\label{est1}
\|u\|_{L^\infty_T H^s}^2 \lesssim \|u_0\|_{H^s}^2 + \|u\|_{M^{\frac{1}{2}+}_T}  \|u\|_{M^{s}_T} \|u\|_{L^\infty_T H^s}
\end{eqnarray}
which, together with \eqref{estXregular} completes the proof of the proposition.
\end{proof}
Let us now establish an a priori estimate at the regularity level $ s-1$ on the difference of two solutions.
\begin{proposition} \label{prodif}
Let $ 0<T<2$ and $ u,v \in L^\infty_T H^s $ with $ s>1/2 $ be two solutions to \eqref{bo}. Then it holds
\begin{equation}\label{estdiffHsregular}
\|u-v\|_{L^\infty_TH^{s-1}}  \lesssim \|u_0-v_0\|_{H^{s-1}}  +\|u+v\|_{M^s_T}
\|u-v\|_{M^{s-1}_T} \;.
\end{equation}
\end{proposition}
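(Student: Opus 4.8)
The plan is to mirror the proof of Proposition \ref{prou}, working now at the regularity level $s-1$ on the difference and using the extra derivative available on the sum $u+v$. Set $w:=u-v$ and $z:=u+v$. Subtracting the two equations and using that $uu_x-vv_x=\tfrac12\partial_x(zw)$, we see that $w$ solves
$$
w_t + L_{\alpha+1}w + \tfrac12\partial_x(zw) = 0 .
$$
As in Lemma \ref{lem1} and Proposition \ref{prou}, I would first extend $u$ and $v$ (hence $w$ and $z$) off $]0,T[$ by means of the operator $\rho_T$ of \re{defrho}, so that all the norms below are the full-line norms controlled by the restriction norms, and then drop the tildes.

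Next I apply $P_N$, take the $H^{s-1}$ scalar product with $P_N w$ and integrate on $]0,t[$. Since $L_{\alpha+1}$ is skew-adjoint (its symbol $ip_{\alpha+1}$ is purely imaginary, $p_{\alpha+1}$ being real), the linear contribution drops out and one is left with
$$
\|P_N w\|_{L^\infty_T H^{s-1}}^2 \lesssim \|P_N w_0\|_{H^{s-1}}^2 + \sup_{t\in]0,T[}\cro{N}^{2(s-1)}\Bigl|\int_0^t\int_\R P_N(zw)\,\partial_x P_N w\Bigr| .
$$
The heart of the matter is to estimate the sum over $N$ of these trilinear terms. Exactly as in \re{decdy}--\re{taylorphi}, I would split $P_N(zw)$ into its high-high part $P_N(z_{\gtrsim N}w_{\gtrsim N})$ and its two low-high parts $P_N(z_{\ll N}w)$ and $P_N(z\,w_{\ll N})$ (the low-low part being killed by $P_N$), and use a Taylor expansion of $\phi_N$ to commute $P_N$ with the low-frequency factor. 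After an integration by parts placing the $\partial_x$ on the lowest frequency, the whole sum reduces to a finite number of expressions of the form $\sum_N\sum_{N_1\gtrsim N}(\mathrm{coeff})\sup_t|I_t(\cdot,\cdot,\cdot)|$, the three slots being filled by dyadic pieces of $z$ and of $w$ inside bounded pseudo-products. The only genuine novelty with respect to Proposition \ref{prou} is the bookkeeping: the loss of symmetry forces me to track which slot carries $z$ and which carry $w$, assigning the Sobolev weight $\cro{\cdot}^{s}$ to the $z$-piece and $\cro{\cdot}^{s-1}$ to each $w$-piece.

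I would then feed these into Lemma \ref{lemtriest}. In each of its bounds I can place the outer factor $w$ in the $L^\infty_tL^2_x$ slot, the other $w$ in the $X^{-1,1}$ slot and $z$ in the remaining slot; after inserting the weights these become, respectively, pieces of $\|w\|_{L^\infty_TH^{s-1}}$, of $\|w\|_{M^{s-1}_T}$ (recall $M^{s-1}=L^\infty_tH^{s-1}\cap X^{s-2,1}$) and of $\|z\|_{M^s_T}$, using also $\|z\|_{L^2_TH^s}\lesssim\|z\|_{L^\infty_TH^s}$ on $]0,T[$ with $T<2$. Summing in $N$ and $N_1$ by Cauchy--Schwarz closes provided the negative powers of $N,N_1$ gained in Lemma \ref{lemtriest} beat the weights; this is exactly the same arithmetic as in Proposition \ref{prou}, where $s>1/2$ together with $\alpha\ge1$ is what makes the dyadic series converge, and it is the main (though by now routine) obstacle. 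The outcome is
$$
\|w\|_{L^\infty_TH^{s-1}}^2 \lesssim \|w_0\|_{H^{s-1}}^2 + \|z\|_{M^s_T}\,\|w\|_{M^{s-1}_T}\,\|w\|_{L^\infty_TH^{s-1}} .
$$
Finally, writing $A=\|w\|_{L^\infty_TH^{s-1}}$, $B=\|w_0\|_{H^{s-1}}$ and $C=\|z\|_{M^s_T}\|w\|_{M^{s-1}_T}$, the inequality $A^2\lesssim B^2+CA$ gives $A\lesssim B+C$, which is precisely \re{estdiffHsregular}.
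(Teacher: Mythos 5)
Your proposal follows the paper's proof essentially line by line: same equation for $w=u-v$, same extension off $]0,T[$, same frequency-localized $H^{s-1}$ energy estimate, same decomposition of $P_N(zw)$ into low-high, high-low and high-high pieces via \eqref{dec1} and \eqref{taylorphi}, reduction to the trilinear quantities $I_t$, application of Lemma \ref{lemtriest}, and the same final quadratic-inequality step; the intermediate bound you announce is exactly what the paper's \eqref{est2} reduces to when $s>1/2$.

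The one statement that is not accurate is your claim that ``in each of its bounds I can place the outer factor $w$ in the $L^\infty_tL^2_x$ slot, the other $w$ in the $X^{-1,1}$ slot and $z$ in the remaining slot.'' Lemma \ref{lemtriest} offers no such freedom: its conclusion is a \emph{sum} of terms in which the norms are attached to the factors according to their frequency sizes (the remark after the lemma only allows permuting the positions of the factors inside $I_t$, not the norm assignments). In particular, for the interactions where $z$ carries the highest frequency --- the paper's $J_2$ in \eqref{J1}, coming from $P_N(z_{\sim N}w_{\lesssim N})$ --- the first bound of the lemma unavoidably produces the combination $\|z_{N_1}\|_{X^{-1,1}}\|w_{N_1}\|_{L^2_{tx}}$, i.e.\ the Bourgain-type norm falls on $z$ and not on $w$. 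This is precisely why \eqref{est2} contains the terms $\|z\|_{M^{s}_T}\|w\|_{M^{-\frac12+}}\|w\|_{L^\infty_TH^{s-1}}$ and $\|z\|_{M^{s}_T}\|w\|_{M^{s-1}}\|w\|_{L^\infty_TH^{-\frac12+}}$. Fortunately this does not damage your conclusion: since $s>1/2$ one has $\|w\|_{M^{-\frac12+}}\lesssim\|w\|_{M^{s-1}}$ and $\|w\|_{L^\infty_TH^{-\frac12+}}\lesssim\|w\|_{L^\infty_TH^{s-1}}$, and $\|z\|_{X^{s-1,1}}$ is part of $\|z\|_{M^s}$, so all the forced terms are still dominated by $\|z\|_{M^s_T}\|w\|_{M^{s-1}_T}\|w\|_{L^\infty_TH^{s-1}}$. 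So the proof closes, but only after tracking these compulsory assignments --- together with the observation (needed for the high-high term, $J_3$ in the paper, where the outer $w$ sits at the lowest frequency) that $s>1/2$ lets one compare it to $J_2$ --- and this tracking is the actual content of the paper's estimates, which your ``choose the slots'' shortcut glosses over.
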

\begin{proof}
 The difference $w=u-v$ satisfies
\begin{equation}\label{eq-diff}
w_t+D^\alpha w_x = \partial_x(zw),
\end{equation}
where $z=u+v$. Proceeding  as in the proof of the preceding proposition, we infer that for $ N>0$,
\begin{equation}
\|P_N w\|^2_{L^\infty_T H^{s-1}} \lesssim \|P_N w_0\|_{H^{s-1}}^2 + \sup_{t\in ]0,T[}  \langle N\rangle^{2(s-1)} \Bigl| \int_0^t\int_{\R}  P_N(zw) \partial_xP_N w \Bigr|
\end{equation}
We take extensions  $\tilde{z} $ and $ \tilde{w} $  of $ z $ and $ w$ supported in time in $]-2,2[ $   such that $ \|\tilde{z}\|_{M^s} \lesssim \|u\|_{M^s_T} $
 and  $ \|\tilde{w}\|_{M^{s-1}} \lesssim \|u\|_{M^{s-1}_T} $. To simplify the notation we drop the tilde in the sequel.\\
Setting
\begin{equation}\label{defJ}
 J :=\sum_{N>0} \langle N\rangle^{2(s-1)}  \sup_{t\in ]0,T[}\Bigl| \int_0^t\int_{\R}  P_N(zw) \partial_x P_N w \Bigr|
\end{equation}
it follows from (\ref{taylorphi}) and classical dyadic decomposition   that for all $N>0$,
\begin{align}
P_N(zw) &= P_N(z_{\ll N}w) + P_N(z_{\sim N}w_{\lesssim N}) + \sum_{N_1\gg N} P_N(z_{N_1} w_{\sim N_1}) \nonumber \\
&= z_{\ll N}w_N + N^{-1}\Pi_\chi(\partial_xz_{\ll N}, w) + P_N(z_{\sim N}w_{\lesssim N}) + \sum_{N_1\gg N} P_N(z_{N_1} w_{\sim N_1}).\label{dec1}
\end{align}
Inserting this into (\ref{defJ}) and integrating by parts we infer
\begin{align*}
 J &\lesssim \sum_{N>0} \sum_{N_1\ll N} N_1 \cro{N}^{2(s-1)} \sup_{t\in ]0,T[} \left| \int_0^t\int_\R \Pi_{\chi_1}(z_{N_1}, w_N) w_N \right|\\
 &\quad + \sum_{N>0} \sum_{N_1\ll N} N_1 \cro{N}^{2(s-1)} \sup_{t\in ]0,T[} \left| \int_0^t\int_\R \Pi_{\chi_2}(z_{N_1}, w_{\sim N}) w_N \right|\\
 &\quad + \sum_{N>0} \sum_{N_1\lesssim N} N \cro{N}^{2(s-1)} \sup_{t\in ]0,T[} \left| \int_0^t\int_\R \Pi_{\chi_3}(z_{\sim N}, w_{N_1}) w_N \right|\\
 &\quad + \sum_{N>0} \sum_{N_1\gg N} N \cro{N}^{2(s-1)} \sup_{t\in ]0,T[} \left| \int_0^t\int_\R \Pi_{\chi_3}(z_{N_1}, w_{\sim N_1}) w_N \right|
\end{align*}
where $\chi_i$, $1\le i\le 3$ are defined in (\ref{chi1})-(\ref{chi2})-(\ref{chi3}). Therefore it suffices to estimate
\begin{align}
J &\lesssim \sum_{N>0}\sum_{N_1\gtrsim N} N\langle N_1\rangle^{2(s-1)} \sup_{t\in ]0,T[} |I_t(z_N, w_{\sim N_1}, w_{N_1})| \nonumber \\
&\quad + \sum_{N>0}\sum_{N_1\gtrsim N} N_1\langle N_1\rangle^{2(s-1)} \sup_{t\in ]0,T[} |I_t(z_{\sim N_1}, w_N, w_{N_1})| \nonumber \\
&\quad + \sum_{N>0}\sum_{N_1\gtrsim N} N\langle N\rangle^{2(s-1)} \sup_{t\in ]0,T[} |I_t(z_{N_1}, w_{N_1}, w_N)| \nonumber \\
&:= J_1+J_2+J_3 \label{J1}.
\end{align}
  The contribution of the sum over $ N \le 2^9 $ in \eqref{J1} is easily estimated thanks to (\ref{estitlow}) by
\begin{align}
\sum_{N\le 2^9 }\sum_{N_1\gtrsim N} & N^{1/2} \Bigl(
N \,\|z_N\|_{L^\infty_tL^2_x} \|w_{N_1} \|_{L^2_tH^{s-1}}^2 \nonumber \\
&\quad +N_1 \langle N_1\rangle^{-1} \,\|z_{N_1} \|_{L^2_t H^s}
\|w_N\|_{L^\infty_t L^2_x} \|w_{N_1} \|_{L^2_t H^{s-1}} \nonumber \\
&\quad + N \langle N_1\rangle^{1-2s}  \,\|z_{N_1} \|_{L^2_t H^s}
\|w_{N_1}\|_{L^2_t H^{s-1}} \|w_N \|_{L^\infty_t L^2_x}\Bigr) \nonumber \\
& \lesssim \|z\|_{L^\infty_t L^2_x}  \|w\|_{L^\infty_t H^{s-1}}^2 +  \|w\|_{L^\infty_t H^{-\frac{1}{2}}_x}\|z\|_{L^\infty_t H^s} \|w\|_{L^\infty_t H^{s-1}}
\end{align}
For the contribution of the sum over $N> 2^9$, it is worth noticing that since $s>1/2$, the term $J_3$ is controlled by $J_2$. The contribution of $J_1$ is estimated thanks to Lemma \ref{lemtriest} by
\begin{align}
\sum_{N>2^9}\sum_{N_1\gtrsim N} N N_1^{2(s-1)} & \Big[ N^{-1/2}N_1^{1-\alpha} \|z_N\|_{L^\infty_tL^2_x} \|w_{N_1}\|_{L^2_{tx}} \|w_{N_1}\|_{X^{-1,1}} \nonumber \\
&\quad + N^{1/2}N_1^{-\alpha} \|z_N\|_{X^{-1,1}} \|w_{N_1}\|_{L^\infty_tL^2_x}^2 \nonumber \\
&\quad + N^{-1}N_1^{-1/8} \|z_N\|_{L^\infty_tL^2_x} \|w_{N_1}\|_{L^\infty_tL^2_x}^2 \Big] \nonumber \\
&\lesssim \|z\|_{M^{1/2+}} \|w\|_{M^{s-1}} \|w\|_{L^\infty_tH^{s-1}}.
\end{align}
Finally, we bound in the same way $J_2$ by
\begin{align}
\sum_{N>2^9}\sum_{N_1\gtrsim N} N_1^{2s-1} & \Big[ N^{-1/2}N_1^{1-\alpha} \|w_N\|_{L^\infty_tL^2_x} (\|z_{N_1}\|_{L^2_{tx}} \|w_{N_1}\|_{X^{-1,1}} + \|z_{N_1}\|_{X^{-1,1}} \|w_{N_1}\|_{L^2_{tx}}) \nonumber \\
&+ N^{1/2}N_1^{-\alpha} \|w_N\|_{X^{-1,1}} \|z_{N_1}\|_{L^\infty_tL^2_x} \|w_{N_1}\|_{L^\infty_tL^2_x} \nonumber\\
&+ N^{-1}N_1^{-1/8} \|w_N\|_{L^\infty_tL^2_x} \|z_{N_1}\|_{L^\infty_tL^2_x} \|w_{N_1}\|_{L^\infty_tL^2_x} \Big] \nonumber\\
&\lesssim \|z\|_{M^s} \|w\|_{M^{-1/2+}} \|w\|_{L^\infty_tH^{s-1}} + \|z\|_{M^s} \|w\|_{M^{s-1}} \|w\|_{L^\infty_tH^{-1/2+}}. \label{J4}
\end{align}
Gathering estimates \eqref{J1}-\eqref{J4}  we obtain
\begin{equation}
\label{est2}
J \lesssim  (\|z\|_{M^{\frac{1}{2}+}_T} \|w\|_{M^{s-1}} + \|z\|_{M^{s}_T} \|w\|_{M^{-\frac{1}{2}+}})\|w\|_{L^\infty_T H^{s-1}} + \|z\|_{M^{s}_T} \|w\|_{M^{s-1}}\|w\|_{L^\infty_T H^{-\frac{1}{2}+}}
\end{equation}
which  leads to \eqref{estdiffHsregular} and completes the proof of the proposition.
\end{proof}
\subsection{Unconditional well-posedness}\label{unc}

It is well known (see for instance  \cite{abfs}) that \eqref{bo} is locally well-posed in $ H^s $ for $ s>3/2 $ with a minimum time of existence which depends on
$ \|u_0\|_{H^{3/2}+} $.  As in the beginning of this section, we will use that $ u_\lambda(t,x):=\lambda^{\alpha} u(\lambda^{\alpha+1}t, \lambda x) $ is solution to \eqref{bo} with $ L_{\alpha+1}$ replaced by $ L^\lambda_{\alpha+1}$ that is the Fourier multiplier by $ i \lambda^{\alpha+1}p_{\alpha+1}(\lambda^{-1}\cdot) $. Let $ u $ be a smooth solution  to \eqref{bo} emanating from a smooth initial data $u_0$, it follows from \eqref{hyp2} that the estimate \eqref{estHsregular} also holds for $ u_\lambda $ with $ 0<\lambda\le 1$.
 Since $ \|u_\lambda(0)\|_{H^{\frac{1}{2}+}} \lesssim \lambda^{\alpha-1/2} \|u_0\|_{H^{\frac{1}{2}+}} $, a classical continuity argument ensures that
 for $ \lambda \sim (1+\|u_0\|_{H^{\frac{1}{2}+}})^{-\frac{1}{\alpha-1/2}}$, $ u_\lambda $ exists at least on $ [0,1] $ with $ \|u_\lambda \|_{L^\infty(0,1;H^{\frac{1}{2}+})}  \lesssim
   \lambda^{\alpha-1/2} \|u_0\|_{H^{\frac{1}{2}+}} $. Going back to $ u $ we obtain that it exists at least on $ [0,T] $, with
    $ T=T(\|u_0\|_{H^{\frac{1}{2}+}}):=(1+\|u_0\|_{H^{\frac{1}{2}+}})^{-\frac{2(\alpha+1)}{2\alpha-1}} $ and
   $$
   \|u\|_{L^\infty(0,T;H^s)} \lesssim \|u_0\|_{H^{s}}\mbox{ for }s>1/2\, .
   $$
 Now, let $ u_0\in H^s(\R) $ with $ s>1/2$. From the above remark, we infer that we can pass  to the limit on a sequence of solutions emanating from smooth approximations of $ u_0 $ to  obtain the existence of a solution $ u\in L^\infty_T H^s $, with initial data $ u_0 $, of \eqref{bo}. Moreover, \eqref{estdiffHsregular}-\eqref{estdiffXregular} ensure that this solution is the only one in this class.
Now the continuity of $ u $ with values in $H^{s}(\R)$ as well as the continuity of the flow-map in $H^{s}(\R)$ will follow
from the Bona-Smith argument (see \cite{BS}).
 For any $ \varphi\in H^s(\R) $, any dyadic integer $ N \ge 1$ and any $ r\ge 0 $,  straightforward calculations in Fourier space lead to
   \begin{equation}\label{init}
   \|P_{\le N} \varphi\|_{H^{s+r}_x}\lesssim N^r\|\varphi\|_{H^s_x}\quad\mbox{and} \quad
   \| \varphi-P_{\le N} \varphi\|_{H_x^{s-r}} \lesssim o(N^{-r}) \| \varphi\|_{H^s_x}\; .
   \end{equation}
   Let $ u_0 \in H^s $ with $ s>1/2$. We denote by $ u^N $ the solution of \eqref{bo} emanating from $ P_{\le N} u_0$ and
for $1\le N_1 \le N_2  $, we set
$$
w:=u^{N_1}-u^{N_2} \, .
$$
It follows from the estimates of the previous subsection applied to
$w$ that for $ T=T(\|u_0\|_{H^{\frac{1}{2}+}}) $ and any  $ -\frac{1}{2} +<r\le s $  it holds
\begin{equation}\label{to1}
\|w\|_{M_T^{r}}\lesssim \|w(0)\|_{H^{r}} \lesssim N_1^{r-s}\varepsilon(N_1)
\end{equation}
with $ \varepsilon(y) \to 0 $ as $ y\to +\infty $ . Moreover, for any $ r\ge 0 $ we have
\begin{equation}\label{to2}
\|u^{N_i}\|_{M_T^{s+r}}
\lesssim
\|u^{N_i}_0\|_{H^{s+r}}
\lesssim N_i^{r} \|u_0\|_{H^s}
\end{equation}
Next, we observe that $w$ solves the equation
\begin{equation}\label{W}
w_t+L_{\alpha+1} w  =\frac{1}{2} \partial_x (w^2) +\partial_x (u^{N_1} w)
\end{equation}
\begin{proposition}
Let $ 0<T<2$ and $ w\in L^\infty_T H^s $ with $ s>1/2 $ be a solution to \eqref{W}. Then it holds
\begin{eqnarray}
\|w\|_{L^\infty_TH^s}^2  &\lesssim & \|w_0\|_{H^s} ^2 +\|w\|_{M^s_T}^3  \nonumber \\
& & +  \|u^{N_1} \|_{M_T^s} \|w\|_{M_T^s}^2
  + \|u^{N_1} \|_{M_T^{s+1}} \|w\|_{M_T^s} \|w\|_{M_T^{s-1}}
\;.
\label{estW}
\end{eqnarray}
\end{proposition}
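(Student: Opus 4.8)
The plan is to run the energy scheme of Propositions \ref{prou} and \ref{prodif} directly on equation \eqref{W}. First I would apply $P_N$ to \eqref{W}, take the $H^s$ scalar product with $P_N w$ and integrate on $]0,t[$; since $L_{\alpha+1}$ is skew-adjoint (its symbol $ip_{\alpha+1}$ is purely imaginary with $p_{\alpha+1}$ odd), the linear term drops and this yields, for every dyadic $N>0$,
\[
\|P_N w\|^2_{L^\infty_T H^s} \lesssim \|P_N w_0\|^2_{H^s} + \sup_{t\in ]0,T[}\langle N\rangle^{2s}\Bigl|\int_0^t\int_\R P_N\bigl(\tfrac12 w^2 + u^{N_1}w\bigr)\partial_x P_N w\Bigr|.
\]
Summing over $N$ and replacing $w,u^{N_1}$ by extensions supported in $]-2,2[$, the estimate reduces to bounding two trilinear sums, one built on the quadratic term $\tfrac12\partial_x(w^2)$ and one built on the bilinear term $\partial_x(u^{N_1}w)$.

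The first sum is literally the quantity $I$ of \eqref{defI}: decomposing $P_N(w^2)$ through \eqref{decdy}--\eqref{taylorphi} so as to put the derivative on the lowest frequency factor and invoking Lemma \ref{lemtriest}, it is estimated exactly as in the proof of Proposition \ref{prou}, giving a bound by $\|w\|_{M^{1/2+}_T}\|w\|_{M^s_T}\|w\|_{L^\infty_T H^s}\lesssim \|w\|_{M^s_T}^3$ (using $1/2+\le s$). This accounts for the $\|w\|_{M^s_T}^3$ term in \eqref{estW}.

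The second sum is the analogue of $J$ from Proposition \ref{prodif}, but now weighted at the energy level $s$ rather than $s-1$. I would decompose $P_N(u^{N_1}w)$ as in \eqref{dec1}, integrate by parts, and reduce as in \eqref{J1} to three families $K_1,K_2,K_3$ of forms $I_t$, according to whether $u^{N_1}$ carries the lowest frequency or one of the two highest ones (writing $M$ for the high dyadic frequency so as not to clash with the truncation index $N_1$); as in Proposition \ref{prodif}, $K_3$ is absorbed by $K_2$ because $s>1/2$. The output frequencies $N\le 2^9$ are harmless via \eqref{estitlow} and Cauchy--Schwarz, and for $N>2^9$ one applies the second part of Lemma \ref{lemtriest}.

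The crux, and the main obstacle, is how to split the weight $\langle N\rangle^{2s}$ among the three factors. When $u^{N_1}$ sits at the lowest frequency (family $K_1$) it is only measured in $M^s_T$ and both factors of $w$ at level $s$, producing $\|u^{N_1}\|_{M^s_T}\|w\|^2_{M^s_T}$. The delicate interaction is the high-high-to-low one inside $K_2$, where one high frequency is on $u^{N_1}$, the other on one copy of $w$, and the low output falls on the remaining $w$: here a full derivative is available, and since $u^{N_1}$ is a smooth frequency-truncated solution one is free to load that derivative onto it, measuring $u^{N_1}$ in $M^{s+1}_T$ and the low-frequency factor $w$ in the weaker norm $M^{s-1}_T$. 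The powers of $M$ then close because $\alpha\ge 1$ and the powers of the lowest frequency because $s>1/2$, yielding the last term $\|u^{N_1}\|_{M^{s+1}_T}\|w\|_{M^s_T}\|w\|_{M^{s-1}_T}$ of \eqref{estW}. This term is the whole point of the estimate: through \eqref{to1}--\eqref{to2} the derivative lost on $w$ is paid for by the extra smoothness of $u^{N_1}$, which is exactly what makes the Bona--Smith differences converge. Gathering the three contributions completes the proof.
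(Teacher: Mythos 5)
Your proposal is correct and takes essentially the same route as the paper: the paper's own proof simply splits the nonlinearity into $\tfrac12\partial_x(w^2)$ and $\partial_x(u^{N_1}w)$ and invokes the already-derived bounds \eqref{est1} and \eqref{est2} (the latter applied at level $s$ with $z$ replaced by $u^{N_1}$), which is exactly the argument you rerun in expanded form, including the absorption of the third family into the second for $s>1/2$ and the loading of the extra derivative onto the smooth truncated solution $u^{N_1}$ together with $\|w\|_{M^{-\frac12+}_T}\lesssim\|w\|_{M^{s-1}_T}$.
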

\begin{proof}
Actually it is a consequence  of  estimates derived in the proof of Propositions \ref{prou} and \ref{prodif}.
 We separate the contributions of $ \partial_x(w^2) $ and $ \partial_x (u^{N_1} w) $. Let $ t\in ]0,T[ $. First   \eqref{est1} leads to
 $$
 \sum_{N}N^{2s}  \Bigl| \int_0^t \int_{\R} P_N \partial_x(w^2) P_N w \Bigr| \lesssim \| w\|_{M_T^s}^3  \; .
 $$
 Second, applying \eqref{est2} at the level $s $ with $z$ replaced by $ u^{N_1} $  we obtain
 $$
  \sum_{N} N^{2s} \Bigl| \int_0^t \int_{\R} P_N \partial_x(u^{N_1} w) P_N w \Bigr| \lesssim \|u^{N_1} \|_{M_T^s} \|w\|_{M_T^s}^2
  + \|u^{N_1} \|_{M_T^{s+1}} \|w\|_{M_T^s} \|w\|_{M_T^{-\frac{1}{2}+}}
 $$
which   leads to \eqref{estW} since $ s>1/2$.
\end{proof}
 \eqref{to1}-\eqref{to2} together with \eqref{estW} lead to
\begin{eqnarray}
\|w\|_{L^\infty_TH^s}^2  &\lesssim & \|w_0\|_{H^s} ^2 +\varepsilon(N_1)
+ N_1 \,  N^{-1}_1 \varepsilon(N_1)  \label{zq}\\
 & \lesssim & \varepsilon(N_1) \nonumber    \; .
\end{eqnarray}
This shows that $ \{u^N\} $ is a Cauchy sequence in $C([0,T];H^s) $ and thus $ \{u^N\} $ converges in $C([0,T];H^s) $ to a solution of \eqref{bo} emanating from $ u_0$. Therefore, the uniqueness result ensures that $ u\in C([0,T];H^s)$.
\subsection{Continuity of the flow map} \label{cont}
Let $ s>1/2 $ and  $ \{u_{0,n}\}\subset
H^{s}(\R) $ be such that
$
u_{0,n}\rightarrow u_0
$
in $ H^{s}(\R) $.
We want to prove that the emanating solution $ u_n $ tends to $ u
$ in $ C([0,T];H^{s}) $. By the triangle inequality,
$$
\|u-u_n\|_{L^\infty_T H^{s}} \le
\|u-u^N\|_{L^\infty_T H^{s}}+
\|u^N-u^N_n\|_{L^\infty_T H^{s}}+
\|u_n^N-u_n\|_{L^\infty_T H^{s}} \quad  .
$$
Using the above estimates on the solution to \eqref{W} we first infer that
\begin{equation}\label{kak1}
\|u-u^N\|_{L^\infty_T H^{s}}+\|u_n -u_n^N \|_{L^\infty_T H^s} =\varepsilon(N),
\end{equation}
where $\varepsilon(y)\rightarrow 0$ as $y \rightarrow \infty $.
Next,  we get the bound
\begin{eqnarray}
\nonumber
\|u^N-u_n^N\|_{L^\infty_T H^{s}} & \lesssim &
\|u^N(0)-u_n^N(0)\|_{H^{s}}+o(1)
\\
\label{kak3}
& = & \|P_{\le N} (u(0)-u^{n}(0))\|_{H^{s}}+ \varepsilon(N)
\\
\nonumber
& \lesssim & \|u_0-u_{0,n}\|_{H^{s}}+ \varepsilon(N)
\end{eqnarray}
where again $\varepsilon(y)\rightarrow 0$ as $y \rightarrow \infty $.
Collecting (\ref{kak1}) and (\ref{kak3}) ends
the proof of the continuity of the flow map.
Thus the proof of Theorem~\ref{theo1} is now completed in the case $ s>1/2$.

\section{Estimates in the non regular case}\label{section4}
In this section, we provide the needed estimates at level $s\ge 1-\alpha/2$ for $1< \alpha\le 2$.
We introduce the space
\begin{equation}\label{defF}
F^{s,b} = F^{s,\alpha,b} = X^{s-\frac{\alpha+1}2,b+1/2} + X^{s-\frac{1+\alpha}{8},b+\frac{1}{8}} \;,
\end{equation}
 endowed with the usual norm  and we define
 $$
Y^s= Y^{s,\alpha} = L^\infty_t H^s\cap F^{s,\alpha,1/2}=  L^\infty_t H^s \cap (X^{s-\frac{\alpha+1}2,1} + X^{s-\frac{1+\alpha}{8},\frac{5}{8}}) \quad .
 $$
 For $ u_0\in H^s(\R) $ we will construct a solution to \eqref{bo} that belongs to $ Y^s_T $ for some $ T=T(\|u_0\|_{H^{1-\frac{\alpha}{2}}})>0 $.
  As in the regular case, by a dilation argument, we may assume that $ L_{\alpha+1} $ satisfies \eqref{hyp2} for $ 0<\lambda \le 1 $.
\begin{remark} Actually except in the case $ (s,\alpha)=(0,2) $ we could  simply take $Y^{s,\alpha}:=
  L^\infty_t H^s\cap X^{s-\frac{\alpha+1}2,1}$.
But to include the case $ (s,\alpha)=(0,2) $  in the general case we prefer to  introduce the sum space  $ F^{s,\alpha,1/2} $ (see \eqref{defF}) in all the cases.
\end{remark}
\begin{lemma}\label{YY}
Let $ 0<T<2$, $ 1<\alpha\le  2 $, $ s\ge 1-\alpha/2 $   and $ u\in L^\infty_T H^s $  be a solution to \eqref{bo}. Then
 $ u $ belongs to $ Y^{s,\alpha}_T $. Moreover,  if $ (s,\alpha)\neq (0,2) $  it holds
\begin{equation}\label{Y1}
\|u\|_{Y^{s,\alpha}_T}\lesssim \|u\|_{L^\infty_T H^s} (1+  \|u\|_{L^\infty_T H^{1-\frac{\alpha}{2}}})
\end{equation}
and if $ (s,\alpha)=(0,2) $,
\begin{equation}\label{Y2}
\|u\|_{Y^{0,2}_T}\lesssim \|u\|_{L^\infty_T L^2_x} (1+  \|u\|_{L^\infty_T L^2_x}^2) \; .
\end{equation}
\end{lemma}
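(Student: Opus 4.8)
The plan is to mimic the proof of Lemma \ref{lem1}: since the $L^\infty_T H^s$ bound is assumed, I only have to control the $F^{s,\alpha,1/2}_T$-norm of $u$. First I would extend $u$ off $(0,T)$ with the operator $\rho_T$ of \re{defrho}, which is bounded on the Bourgain and $L^p_t$ spaces uniformly for $0<T<2$, so as to work on $\R^2$ and invoke the restriction-space linear estimate
$$
\|u\|_{X^{\theta,b}_T}\lesssim \|u_0\|_{H^\theta}+\|\partial_x(u^2)\|_{X^{\theta,b-1}_T},\qquad \tfrac12<b\le 1 .
$$
Taking $\theta=s-\frac{\alpha+1}2$ and $b=1$, the data term is harmlessly bounded by $\|u_0\|_{H^{s-\frac{\alpha+1}2}}\le\|u\|_{L^\infty_T H^s}$ and lands in the first summand of $F$. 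Everything thus reduces to estimating the nonlinearity $\partial_x(u^2)$ in the dual spaces of the two summands, the one real constraint being that only the $L^\infty_T H^s$ and $L^\infty_T H^{1-\alpha/2}$ norms of $u$ are at my disposal on the right.

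For $(s,\alpha)\neq(0,2)$ I expect the single summand $X^{s-\frac{\alpha+1}2,1}$ to suffice, so that $\|u\|_{F_T}\le\|u\|_{X^{s-\frac{\alpha+1}2,1}_T}$, and the previous estimate with $(\theta,b)=(s-\frac{\alpha+1}2,1)$ reduces the whole matter, since $X^{\theta,0}=L^2_t H^\theta$, to the product estimate
$$
\|u^2\|_{L^2_T H^{\,s-\frac{\alpha-1}2}}\lesssim \|u\|_{L^\infty_T H^s}\big(1+\|u\|_{L^\infty_T H^{1-\frac\alpha2}}\big).
$$
I would prove this by Littlewood--Paley, splitting into a high--low part $N_1\ll N_2\sim N$ and a high--high part $N_1\sim N_2\gtrsim N$, using Bernstein and \re{estPi} together with the pseudo-product $\Pi$ to put the derivative on the lowest frequency. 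The high--low regime gains a factor $(N_1/N)^{(\alpha-1)/2}$, summable exactly because $\alpha>1$, and it is this factor that forces the $H^{1-\alpha/2}$-norm to appear; the high--high regime is controlled by $N_1^{\,1-\frac\alpha2-s}\le1$ after summing the (low) output frequency, which closes for $s\ge1-\frac\alpha2$ with the single exception $(s,\alpha)=(0,2)$, where that output-frequency summation is only logarithmic. This yields \re{Y1}.

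To treat that borderline case, and to keep one uniform statement, I would genuinely use the sum structure of $F$. All interactions except the high--high-to-low one are still handled by the first summand as above; for the high--high-to-low interaction I would localize in modulation and use Lemma \ref{resonance.lem}, which guarantees that among the three modulations the largest is $\sim NN_{\max}^\alpha=:\Lambda$. When this largest modulation is carried by the output, I place the piece in the second summand $X^{s-\frac{1+\alpha}8,5/8}$, whose dual weight $\cro{\tau-p_{\alpha+1}(\xi)}^{-3/8}$ supplies a negative power of $\Lambda$ that overcomes the logarithm while remaining quadratic. When instead the largest modulation sits on one of the two input factors, I re-inject the equation on that factor, using $\cro{\tau-p_{\alpha+1}(\xi)}\,\widehat u\sim\widehat{uu_x}$ in the form $\|Q_{\gtrsim\Lambda}P_{N_i}u\|_{L^2}\lesssim\Lambda^{-1}\|P_{N_i}(uu_x)\|_{L^2}$; this trades the modulation weight for an extra quadratic factor and produces the cubic bound \re{Y2}.

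The hard part will be precisely this endpoint bookkeeping: distributing each frequency--modulation packet to the correct summand of $F$ through the resonance relation \re{resonance}, and controlling the case where the resonance is realized by an input modulation without having any a priori Bourgain control on $u$, only its $L^\infty_T L^2$-norm. The exact value $-3/8$ of the dual modulation weight and the threshold $\Lambda$ are chosen so that this single interaction closes at $s=1-\frac\alpha2$; everything else is a routine, if lengthy, dyadic summation.
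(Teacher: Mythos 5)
Your overall strategy is the paper's: extend by $\rho_T$ from \eqref{defrho}, use Duhamel and the linear restriction-space estimate to reduce everything to the nonlinear term, handle all cases except $(s,\alpha)=(0,2)$ through the Sobolev product estimate $\|u^2\|_{L^2_TH^{s+\frac{1-\alpha}2}}\lesssim \|u\|_{L^\infty_TH^s}\|u\|_{L^\infty_TH^{1-\frac\alpha2}}$ (which you re-derive by Littlewood--Paley; the paper simply cites it, with the same exponent bookkeeping $s+1-\frac\alpha2>0$ and gap $1/2$), and at the endpoint decompose the high-high-to-low interaction in modulation via Lemma \ref{resonance.lem}, sending the output-modulation-dominant piece to the second summand $X^{-3/8,-3/8}$ of $F^{0,2,-1/2}$, exactly as in \eqref{lk2}. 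Up to that point your proposal matches \eqref{deco}--\eqref{lk2}.

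The gap is in the region where the dominant modulation sits on an \emph{input} factor. The inequality $\|Q_{\gtrsim\Lambda}P_{N_i}u\|_{L^2}\lesssim\Lambda^{-1}\|P_{N_i}(uu_x)\|_{L^2}$ cannot be used as written: the projection $Q_{\gtrsim\Lambda}$ involves the time Fourier transform over all of $\R$, so it necessarily acts on an extension $\ut$ of $u$, and no admissible extension (certainly not $\rho_Tu$) solves \eqref{bo} outside $]0,T[$; hence the substitution $\cro{\tau-p_{\alpha+1}(\xi)}\,\widehat{\ut}\sim\widehat{\ut\,\ut_x}$ is simply false for the object you are projecting, and you have, as you yourself note, no a priori Bourgain control to fall back on. The paper fills exactly this hole with a preliminary step: the crude a priori bound \eqref{XX}, $\|u\|_{X^{-7/4,1}_T}\lesssim\|u\|_{L^\infty_TL^2_x}(1+\|u\|_{L^\infty_TL^2_x})$, obtained \emph{in the restriction space} from Duhamel, $L^1_x\hookrightarrow H^{-3/2-}_x$ and Bernstein; one then chooses an extension nearly attaining this norm, on which the high input modulation is legitimately traded for the quadratic factor via $\|Q_{\gtrsim\Lambda}\ut_{N_1}\|_{L^2}\lesssim\Lambda^{-1}N_1^{7/4}\|\ut_{N_1}\|_{X^{-7/4,1}}$, as in \eqref{lk3}. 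This intermediate bound is where the equation is actually "re-injected", and, being quadratic, it is also the source of the cubic right-hand side of \eqref{Y2}. Your numerology ($\Lambda^{-1}N_1^{3/2}$ per such factor) agrees with the paper's, so the repair requires no change elsewhere in your scheme; but without this restriction-norm intermediary your endpoint argument does not close.
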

\begin{proof}
As in Lemma \ref{lem1} we will work with the extension $ \tilde{u}=\rho_T u $ of $ u $ (see \eqref{defrho}). Recall that $ \supp \ut \subset [-2,2]\times \R $
 and that
 $$
 \|\ut \|_{L^\infty_t H^s} \lesssim \|u\|_{L^\infty_T H^s} \quad  \mbox{ and }\quad
  \|\ut \|_{X^{\theta,b}} \lesssim  \|u \|_{X^{\theta,b}_T}\quad ,
 $$
 for any $ (\theta,b) \in \R \times ]-\infty,1] $.
 It thus remains to control the $ F^{s,\alpha,1/2}_T $-norm of $ u $. In the case $ (s,\alpha)\neq (0,2) $ we actually  simply control the
 $X^{s-\frac{\alpha+1}2,1}_T$-norm of $ u$.
  Using the integral formulation, standard linear estimates in Bourgain's spaces  and standard product estimates in Sobolev spaces  we infer that
\begin{eqnarray*}
\|u\|_{X^{s-\frac{1+\alpha}{2},1}_T}  &\lesssim & \|u_0\|_{H^{s-\frac{1+\alpha}{2}}} + \|\partial_x(u^2)\|_{X^{s-\frac{1+\alpha}{2},0}_T}\\
& \lesssim & \|u_0\|_{H^{s-\frac{1+\alpha}{2}}} +\|u^2\|_{L^2_T H^{s+\frac{1-\alpha}{2}}} \\
 & \lesssim & \|u_0\|_{H^{s-\frac{1+\alpha}{2}}} + \|u\|_{L^\infty_T H^{1-\frac{\alpha}{2}}} \|u\|_{L^\infty_T H^{s}}\; ,
\end{eqnarray*}
since for  $ 1<\alpha\le 2 $ and $ s\ge 1-\frac{\alpha}{2}$ with $ (s,\alpha)\neq(0,2) $, it holds $ s+ 1-\frac{\alpha}{2}>0 $ and $ s+ 1-\frac{\alpha}{2}-(s+\frac{1-\alpha}{2})=1/2 $. \\
Let us now tackle  the case $ (s,\alpha)=(0,2) $. First we notice that since $ L^1(\R) \hookrightarrow H^{-\frac{3}{2}-}(\R) $, we have
\begin{equation}\label{XX}
\|u\|_{X^{-\frac{7}{4},1}_T}\lesssim \|u_0\|_{H^{-\frac{7}{4}} }+ \|u^2\|_{L^2_t H^{-\frac{3}{4}}}\lesssim \|u\|_{L^\infty_T L^2_x} (1+\|u\|_{L^\infty_T L^2_x} ) \; .
\end{equation}
To bound the $ F^{0,2,\frac{1}{2}} $-norm of $ u$, we decompose $ u^2 $ as
\begin{equation} \label{deco}
u^2= P_{\le 2} u^2+ \sum_{N>2} \Bigl( P_N(P_{\ll N} u    \, u_{\sim N}) + \sum_{N_1'\sim N_1\gtrsim N}P_N( u_{N_1} u_{N_1'})
 \Bigr) \, .
\end{equation}
The contribution of the  first term in the right hand side is easily controlled by $\|u\|_{L^\infty_T L^2_x}^2$.
  The contribution of the   $(LH)$- interactions is easily estimated  by
 \begin{align}
 \Bigl\|  \sum_{N>2}  \partial_xP_N(P_{\ll N} u   \, u_{\sim N}) \Bigr\|_{F^{0,2,-\frac{1}{2}}_T} & \lesssim
   \Bigl\|  \sum_{N>2} P_N\partial_x (P_{\ll N} u     \, u_{\sim N}) \Bigr\|_{X^{-\frac{3}{2},0}_T} \nonumber \\
& \lesssim \Bigl( \sum_{N>2} \|P_N(P_{\ll N}u    \, u_{\sim N})\|_{L^2_T L^1_x}^2\Bigr)^{1/2}  \nonumber \\
& \lesssim
\Bigl( \sum_{N\ge 1 } \| u_N\|_{L^2_T L^2_x}^2  \|P_{\ll N} u \|_{L^\infty_T L^2_x }^2 \Bigr)^{1/2} \nonumber \\
 & \lesssim   \|u\|_{L^\infty_T L^2} \|u\|_{L^\infty_T L^2_x}\label{p1} \quad .
\end{align}
To estimate the (HH)-interactions, we take advantage of the $X^{-\frac{3}{8},-\frac 38} $-part of $ F^{0,2,-\frac 12} $. For $ N>2 $ we have
\begin{multline}\label{lk1}
\sum_{N_1'\sim N_1 \gtrsim N}\|\partial_xP_N(P_{N_1} u P_{N_1'}u)\|_{F^{0,2,-\frac 12}_T} \\ \lesssim \sum_{N_1'\sim N_1\gtrsim N} N \left\|
\sum_{(L,L_1,L_2)\mbox{ \tiny{satisfying} } \eqref{resonance}} \hspace*{-10mm}\partial_xP_NQ_L(Q_{L_1}\ut_{N_1}Q_{L_2}\ut_{N_1'})\right\|_{X^{-\frac{3}{8},-\frac 38}}.
\end{multline}
For the contribution of the sum over $ L\gtrsim N N_1^2 $ in \eqref{lk1} we obtain
\begin{align}
\sum_{N_1\sim N_1'\gtrsim N} \|\partial_xP_N & Q_{\gtrsim N N_1^2}(\ut_{N_1}\ut_{N_1'})\|_{X^{-\frac{3}{8},-\frac 38}} \nonumber \\
 & \lesssim
\sum_{N_1\sim N_1'\gtrsim N} N^\frac{5}{8}N^{1/2} (N N_1^2)^{-3/8} \|\ut_{N_1} \|_{L^2_{tx}}  \|\ut_{N_1'} \|_{L^\infty_{t}L^2_x}\nonumber\\
& \lesssim \|\ut \|_{L^\infty_t L^2_x}  \sum_{N_1\gtrsim N} \Bigl( \frac{N}{N_1} \Bigr)^{3/4}  \|\ut_{N_1} \|_{L^2_{tx}}\nonumber \\
& \lesssim \gamma_N \, \|\ut \|_{L^\infty_t L^2_x}^2 \; , .\label{lk2}
\end{align}
with $ \|(\gamma_{2^j})\|_{l^2(\N)} \le 1 $.  The contribution of the region ( $ L \ll N N_1^2 $ and  $ L_1\gtrsim N N_1^2 $ )
  in \eqref{lk1} is  controlled  by
\begin{align}
\sum_{N_1\sim N_1'\gtrsim N}  \|\partial_xP_N & Q_{\ll N N_1^2} (Q_{\gtrsim N N_1^2} \ut_{N_1}\ut_{N_1'})\|_{X^{-\frac{3}{8},-\frac 38}}\nonumber \\
& \lesssim
\sum_{N_1\sim N_1'\gtrsim N} N^\frac{5}{8}N^{1/2} (N N_1^2)^{-1} N_1^{\frac{7}{4}} \|\ut_{N_1} \|_{X^{-\frac{7}{4},1}}  \|\ut_{N_1'} \|_{L^\infty_{t}L^2_x}
\nonumber\\
& \lesssim N^{-1/8}\,  \|\ut \|_{L^\infty_t L^2_x} \|\ut \|_{X^{-\frac{7}{4},1}} \; .\label{lk3}
\end{align}
Finally, the contribution of the last region  ( $ L,L_1 \ll N N_1^2 $ and  $ L_2\sim N N_1^2 )$  in \eqref{lk1} is controlled   in the same way.
Gathering \eqref{XX} and \eqref{lk1}-\eqref{lk3}, we obtain the desired result for the case $ (s,\alpha)=(0,2)$.
\end{proof}
In the sequel we will need the following straightforward estimates.
\begin{lemma}\label{lemB}
Let  $  \alpha\ge 0  $ and $ w\in F^{0,\frac{1}{2}}$.
For $ 1\le B\lesssim  N^{\alpha+1} $ it holds
\begin{equation}\label{B1}
\| Q_{\gtrsim B} w_N \|_{L^2} \lesssim B^{-1} N^{\frac{1+\alpha}{2}} \|Q_{\gtrsim B} w_N\|_{F^{0,\frac{1}{2}}}
\end{equation}
and, for $ B\gtrsim \cro{N}^{\alpha+1} $, it holds
\begin{equation}\label{B2}
\| Q_{\gtrsim B} w_N \|_{L^2} \lesssim B^{-5/8} \cro{N}^{\frac{1+\alpha}{8}} \|Q_{\gtrsim B} w_N\|_{F^{0,\frac{1}{2}}}\; .
\end{equation}
\end{lemma}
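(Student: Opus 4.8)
The plan is to read off the estimate directly from the two summands defining $F^{0,\frac12}$, trading the modulation weight $\langle\tau-p_{\alpha+1}(\xi)\rangle$ and the spatial weight $\langle\xi\rangle$ against the $L^2$ norm. Recall that $F^{0,\frac12}=X^{-\frac{\alpha+1}{2},1}+X^{-\frac{1+\alpha}{8},\frac58}$, so for any $\delta>0$ I may select a decomposition $Q_{\gtrsim B}w_N=f+g$ with $f\in X^{-\frac{\alpha+1}{2},1}$, $g\in X^{-\frac{1+\alpha}{8},\frac58}$ and $\|f\|_{X^{-\frac{\alpha+1}{2},1}}+\|g\|_{X^{-\frac{1+\alpha}{8},\frac58}}\le(1+\delta)\|Q_{\gtrsim B}w_N\|_{F^{0,\frac12}}$.

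First I would reduce to the case where $f$ and $g$ carry the same space-time localization as $Q_{\gtrsim B}w_N$ itself. Since the latter has space Fourier support in $\{|\xi|\sim N\}$ and space-time Fourier support in $\{\langle\tau-p_{\alpha+1}(\xi)\rangle\gtrsim B\}$, there are fattened multipliers $\widetilde P_{\sim N}$ and $\widetilde Q_{\gtrsim B}$, with symbols bounded by $1$ and equal to $1$ on these supports, such that $\widetilde P_{\sim N}\widetilde Q_{\gtrsim B}(Q_{\gtrsim B}w_N)=Q_{\gtrsim B}w_N$. Being Fourier multipliers in $\xi$ and in $\tau-p_{\alpha+1}(\xi)$ respectively, they are bounded on every $X^{s,b}$ uniformly in $N$ and $B$ (as in Lemma \ref{continuiteQ}). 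Applying them to the decomposition and using the triangle inequality reduces the problem to bounding $\widetilde P_{\sim N}\widetilde Q_{\gtrsim B}f$ and $\widetilde P_{\sim N}\widetilde Q_{\gtrsim B}g$, which now have frequency $\sim N$ and modulation $\gtrsim B$.

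The core step is then an elementary Plancherel comparison: on the support of a function with frequency $\sim N$ and modulation $\gtrsim B$ one has $\langle\xi\rangle\sim\langle N\rangle$ and $\langle\tau-p_{\alpha+1}(\xi)\rangle\gtrsim B$, so that factoring out the weights $\langle\tau-p_{\alpha+1}(\xi)\rangle^{-1}\langle\xi\rangle^{\frac{\alpha+1}{2}}$, respectively $\langle\tau-p_{\alpha+1}(\xi)\rangle^{-5/8}\langle\xi\rangle^{\frac{1+\alpha}{8}}$, and bounding them pointwise yields
$$\|\widetilde P_{\sim N}\widetilde Q_{\gtrsim B}f\|_{L^2}\lesssim B^{-1}\langle N\rangle^{\frac{\alpha+1}{2}}\|f\|_{X^{-\frac{\alpha+1}{2},1}},\qquad \|\widetilde P_{\sim N}\widetilde Q_{\gtrsim B}g\|_{L^2}\lesssim B^{-\frac58}\langle N\rangle^{\frac{1+\alpha}{8}}\|g\|_{X^{-\frac{1+\alpha}{8},\frac58}}.$$

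It remains to compare the two prefactors, which is the only point where the hypotheses on $B$ are used. Since
$$\frac{B^{-1}\langle N\rangle^{\frac{\alpha+1}{2}}}{B^{-5/8}\langle N\rangle^{\frac{1+\alpha}{8}}}=\Big(\frac{\langle N\rangle^{\alpha+1}}{B}\Big)^{3/8},$$
in the regime $B\lesssim N^{\alpha+1}$ (where necessarily $N\gtrsim1$, hence $\langle N\rangle\sim N$) the first prefactor dominates, giving \eqref{B1}, while in the regime $B\gtrsim\langle N\rangle^{\alpha+1}$ the second prefactor dominates, giving \eqref{B2}; in each case one sums the $f$- and $g$-contributions, lets $\delta\to0$ and takes the infimum over decompositions. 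There is thus no genuine analytic obstacle here: the whole content is the weight trade-off above, and the only thing requiring care is checking that the threshold $\langle N\rangle^{\alpha+1}$ is exactly the crossover between the gain coming from $X^{-\frac{\alpha+1}{2},1}$ and the gain coming from $X^{-\frac{1+\alpha}{8},\frac58}$.
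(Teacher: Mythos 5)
Your proof is correct and is essentially the paper's own argument: the paper likewise writes $F^{0,\frac12}=X^{-\frac{1+\alpha}{2},1}+X^{-\frac{1+\alpha}{8},\frac58}$, bounds $\|Q_{\gtrsim B}w_N\|_{L^2}$ by $\max\bigl(B^{-1}\cro{N}^{\frac{1+\alpha}{2}},\,B^{-5/8}\cro{N}^{\frac{1+\alpha}{8}}\bigr)\|Q_{\gtrsim B}w_N\|_{F^{0,\frac12}}$, and then identifies $\bigl(\cro{N}^{1+\alpha}/B\bigr)^{3/8}$ as the crossover factor deciding which regime gives which estimate. The only difference is that you spell out the reduction (near-optimal decomposition plus fattened frequency/modulation projections bounded on $X^{s,b}$) that the paper compresses into ``it is direct to check.''
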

\begin{proof}
Noticing that $ F^{0,\frac 12}=F^{0,\alpha,\frac 12}= X^{-\frac{1+\alpha}{2},1} + X^{-\frac{1+\alpha}{8}, \frac{5}{8}} $, it is direct to check that
\begin{eqnarray*}
\| Q_{\gtrsim B} w_N \|_{L^2} &\lesssim &\max(B^{-1} \cro{N}^{\frac{1+\alpha}{2}}, B^{-5/8} \cro{N}^{\frac{1+\alpha}{8}}\Bigr)
\| Q_{\gtrsim B} w_N \|_{F^{0,\frac{1}{2}}}\\
& \lesssim & B^{-5/8} \cro{N}^{\frac{1+\alpha}{8}} \max\Bigl( (\cro{N}^{1+\alpha}/B)^\frac{3}{8} ,1\Bigr)  \| Q_{\gtrsim B} w_N \|_{F^{0,\frac{1}{2}}}
\end{eqnarray*}
which leads to the desired result.
\end{proof}
Now we rewrite Lemma \ref{lemtriest} in the context of the $F^{s,b}$ spaces.
\begin{lemma}\label{lemtriestY}
Assume $u_i\in Y^0$, $i=1,2,3$ are functions with spatial Fourier support in $\{|\xi|\sim N_i\}$ with $N_i>0$ dyadic satisfying $N_1\le N_2\le N_3$. \\
If $N_3>2^9$ and $N_1\gtrsim N_3^{\frac 23(1-\alpha)}\wedge 2^9$, it holds for $(p,q)\in \{(2,\infty), (\infty,2)\}$
\begin{align*}
|I_t(u_1,u_2,u_3)|\lesssim & \sum_{l\ge -4} 2^{-l} N_1^{-1/2} N_3^{\frac{1-\alpha}2} \|u_1\|_{L^p_tL^2_x} \|Q_{2^lN_1N_3^\alpha}u_2\|_{F^{0,\frac 12}} \|u_3\|_{L^q_tL^2_x}\\
&+ N_1^{-1/2} N_3^{\frac{1-\alpha}2} \|u_1\|_{L^p_tL^2_x} \|u_2\|_{L^q_tL^2_x} \|Q_{\sim N_1N_3^\alpha}u_3\|_{F^{0,\frac 12}}\\
&+ N_1^{-1/8}\cro{N_1}^{\frac{1+\alpha}8}N_3^{-5\alpha/8} \|u_1\|_{F^{0,\frac 12}} \|u_2\|_{L^2_{tx}} \|u_3\|_{L^\infty_tL^2_x}\\
&+ N_1^{-1/4}N_3^{\frac 18-\frac \alpha 2} \|u_1\|_{L^\infty_tL^2_x} \|u_2\|_{L^\infty_tL^2_x} \|u_3\|_{L^\infty_tL^2_x}.
\end{align*}
\end{lemma}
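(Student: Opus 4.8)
The plan is to transcribe the proof of Lemma \ref{lemtriest} into the $F^{0,\frac12}$ framework, the only new ingredient being the conversion lemma \ref{lemB} that trades a power of the modulation for the $F^{0,\frac12}$-norm. First, $I_t$ vanishes unless $N_2\sim N_3$, so I assume this throughout. Exactly as in \eqref{decIt} I fix a threshold $R$ and split $I_t(u_1,u_2,u_3)=I_\infty(1_{t,R}^{high}u_1,u_2,u_3)+I_\infty(1_{t,R}^{low}u_1,u_2,u_3)=:I_t^{high}+I_t^{low}$, this time with $R\sim N_1^{3/4}N_3^{\frac\alpha2-\frac18}$, reverse-engineered so that the high-frequency part reproduces the last term: by Lemma \ref{ihigh-lem} one has $\|1_{t,R}^{high}\|_{L^1}\lesssim R^{-1}$, and applying the bilinear bound \eqref{estPi} pointwise in time (gaining $N_1^{1/2}$) together with H\"older placing all three factors in $L^\infty_tL^2_x$ gives $I_t^{high}\lesssim N_1^{1/2}R^{-1}\prod_i\|u_i\|_{L^\infty_tL^2_x}=N_1^{-1/4}N_3^{\frac18-\frac\alpha2}\prod_i\|u_i\|_{L^\infty_tL^2_x}$. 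The standing hypothesis $N_1\gtrsim N_3^{\frac23(1-\alpha)}\wedge 2^9$ fixes the regime in which these particular estimates are used; in it the crucial separation $R\ll N_1N_3^\alpha$ holds, which is what lets me invoke Lemma \ref{ilow-lem} below.

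For $I_t^{low}$, since $R\ll N_1N_3^\alpha$ and $N_2\sim N_3$, Lemma \ref{resonance.lem} yields, just as in \eqref{AA}, a decomposition into three pieces according to which modulation realizes the maximum $\sim N_1N_3^\alpha$: high modulation on $u_1$, on $u_2$, or on $u_3$ (the last one being pinned to $\sim N_1N_3^\alpha$). In each piece I apply \eqref{estPi} and H\"older in time with the exponents $(p,q)$, putting the two low-modulation factors in $L^p_tL^2_x$ and $L^q_tL^2_x$; I use Lemma \ref{continuiteQ} to discard the harmless low-modulation cut-offs and Lemma \ref{ilow-lem} to move $1_{t,R}^{low}$ off of $u_1$; finally I convert the surviving $L^2_{tx}$-norm of the high-modulation factor into an $F^{0,\frac12}$-norm through Lemma \ref{lemB}.

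The choice between \eqref{B1} and \eqref{B2} is dictated by whether the threshold $B=N_1N_3^\alpha$ lies below or above $\cro{N}^{\alpha+1}$ at the frequency $N$ of the high-modulation factor. For the pieces in which the high-modulation factor sits at frequency $N_3$ (those on $u_2$ and on $u_3$), the inequality $N_1\le N_3$ gives $B\lesssim N_3^{\alpha+1}$, so \eqref{B1} applies and produces the gain $(N_1N_3^\alpha)^{-1}N_3^{\frac{1+\alpha}2}=N_1^{-1}N_3^{\frac{1-\alpha}2}$; multiplied by the $N_1^{1/2}$ from \eqref{estPi} this yields the coefficient $N_1^{-1/2}N_3^{\frac{1-\alpha}2}$ of the first two terms, $u_3$ contributing the pinned block $Q_{\sim N_1N_3^\alpha}u_3$. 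For the piece in which the high-modulation factor sits at frequency $N_1$ (the one on $u_1$), one instead has $B\gtrsim N_1^{\alpha+1}$, so \eqref{B2} applies and, after using Lemma \ref{ilow-lem}, delivers $N_1^{1/2}(N_1N_3^\alpha)^{-5/8}\cro{N_1}^{\frac{1+\alpha}8}=N_1^{-1/8}\cro{N_1}^{\frac{1+\alpha}8}N_3^{-\frac{5\alpha}8}$, i.e. the third term.

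The genuinely delicate point is the $u_2$-piece, where $u_2$ carries a modulation $L_2=2^lN_1N_3^\alpha$ with $l\ge-4$ ranging over the whole dyadic scale, forcing me to keep the explicit sum $\sum_l2^{-l}$ in the first term rather than summing $Q_{\gtrsim N_1N_3^\alpha}u_2$ in one stroke. For $2^l\lesssim N_3/N_1$ one stays in the \eqref{B1} regime and each block is bounded exactly by $2^{-l}N_1^{-1/2}N_3^{\frac{1-\alpha}2}\|Q_{2^lN_1N_3^\alpha}u_2\|_{F^{0,\frac12}}$, whereas for $2^l\gtrsim N_3/N_1$ the gain switches to \eqref{B2}; there the resonance relation \eqref{resonance} simultaneously forces $u_3$ into a comparable high modulation $L_3\sim L_2$, and it is this extra decay from the third factor, combined with the summing weight $2^{-l}$, that restores control and lets one close the sum by Cauchy--Schwarz against $\|u_2\|_{F^{0,\frac12}}$. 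Handling this tail correctly, together with the verification of $R\ll N_1N_3^\alpha$, is the only step requiring care; the remainder is a direct analogue of Lemma \ref{lemtriest}.
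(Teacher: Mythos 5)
Your proposal reproduces the paper's own argument almost line by line: the same cutoff scale $R=N_1^{3/4}N_3^{\frac{\alpha}{2}-\frac18}$, the same high/low splitting of the time cutoff, the decomposition \eqref{AA}, the bilinear bound \eqref{estPi} plus H\"older with the exponents $(p,q)$, and the same assignment of \eqref{B2} to the $u_1$-piece (third term) and \eqref{B1} to the $u_2$- and $u_3$-pieces (first and second terms). Up to that point everything matches the paper, which — for the record — simply applies the \eqref{B1}-coefficient $(2^lN_1N_3^\alpha)^{-1}N_3^{\frac{\alpha+1}{2}}$ to every block $l\ge -4$ without comment.

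Where you depart from the paper is the tail $2^lN_1\gg N_3$ of the $u_2$-piece, and there your rescue does not work as described. You are right that \eqref{B1} is out of range for those blocks; but the resonance relation \eqref{resonance}, while it does force $L_3\sim L_2$, buys nothing as long as $u_3$ is measured in $L^q_tL^2_x$: modulation projections are merely bounded on $L^q_tL^2_x$ (Lemma \ref{continuiteQ}) and carry no smallness, so there is no ``extra decay from the third factor'' available in the norms that appear in the first term of the statement. If instead you spend the $F^{0,\frac12}$-norm of $Q_{\sim L_2}u_3$, you produce a hybrid term with \emph{both} $u_2$ and $u_3$ in block $F^{0,\frac12}$-norms at modulation $\sim 2^lN_1N_3^\alpha$, which is none of the four terms of the statement — in particular not the second one, whose modulation is pinned at $\sim N_1N_3^\alpha$. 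The clean repair does not use $u_3$ at all: apply \eqref{B2} to the block $Q_{2^lN_1N_3^\alpha}u_2$ itself (legitimate there, since $2^lN_1N_3^\alpha\gtrsim N_3^{1+\alpha}$), which yields the per-block coefficient $N_1^{1/2}(2^lN_1N_3^\alpha)^{-5/8}N_3^{\frac{1+\alpha}{8}}=2^{-5l/8}N_1^{-1/8}N_3^{\frac{1-4\alpha}{8}}$. This is weaker, block by block, than the claimed $2^{-l}N_1^{-1/2}N_3^{\frac{1-\alpha}{2}}$ (off by the factor $(2^lN_1/N_3)^{3/8}$), so the block-wise form of the first term is not recovered; but summing the tail by Cauchy--Schwarz in $l$ (the $F^{0,\frac12}$ block norms are square-summable) gives $\sum_{2^l\gtrsim N_3/N_1}2^{-5l/8}\|Q_{2^lN_1N_3^\alpha}u_2\|_{F^{0,\frac12}}\lesssim (N_1/N_3)^{5/8}\|u_2\|_{F^{0,\frac12}}$, so the whole tail contributes at most $N_1^{1/2}N_3^{-\frac{1+\alpha}{2}}\|u_1\|_{L^p_tL^2_x}\|u_2\|_{F^{0,\frac12}}\|u_3\|_{L^q_tL^2_x}$, which is $\le N_1^{-1/2}N_3^{\frac{1-\alpha}{2}}\|u_1\|_{L^p_tL^2_x}\|u_2\|_{F^{0,\frac12}}\|u_3\|_{L^q_tL^2_x}$ since $N_1\le N_3$. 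In other words, what this route (and, implicitly, the paper's) actually establishes is the lemma with its first term replaced by its $l$-summed version — exactly the form in which it is invoked in Propositions \ref{YYY} and \ref{PP1}, so nothing downstream is affected; but the tail treatment you sketched is the one step that, as written, would fail.
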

\begin{proof}
For $R=N_1^{3/4}N_3^{\frac \alpha 2-\frac 18}$ we decompose $I_t$ as in (\ref{decIt}) and obtain from (\ref{estIthigh}) that
$$
|I_t^{high}| \lesssim N_1^{-1/4} N_3^{\frac 18-\frac \alpha 2} \prod_{i=1}^3 \|u_i\|_{L^\infty_tL^2_x}.
$$
To evaluate $I_t^{low}$ we use decomposition (\ref{AA}) and notice that
$$
R = N_1^{3/4}N_3^{\frac \alpha 2-\frac 18} \le N_1 N_2^{\frac{2\alpha}3 - \frac 7{24}}\ll N_1N_3^\alpha \textrm{ and } N_1 N_3^{\alpha}\gtrsim N_3^{\frac{2+\alpha}3}\gg 1.
$$
Therefore the  contribution  $  I_t^{1,low} $ of the first term of the RHS of (\ref{AA})  to $I_t^{low} $ is easily estimated thanks to Lemmas \ref{ilow-lem} and \ref{lemB} by
$$
 |I_t^{1,low}|    \lesssim   N_1^{1/2}(N_1N_3^\alpha)^{-5/8}\cro{N_1}^{\frac{\alpha+1}8} \|u_1 \|_{F^{0,\frac 12} } \|u_2 \|_{L^2_{tx}} \|u_3 \|_{L^\infty_t L^2_x} ,
 $$
 which is acceptable. Thanks to Lemmas \ref{continuiteQ}, \ref{ilow-lem} and \ref{lemB}, the contribution $ I_t^{2,low} $ of the second term can be handle in the following way
  \begin{align}
 |I_t^{2,low}| \lesssim &  \sum_{l\ge -4} N_1^{1/2}(2^lN_1 N_3^\alpha)^{-1} N_3^{\frac{\alpha+1}2}
 \|u_1 \|_{L^p_t L^2_x } \|Q_{2^lN_1N_3^\alpha} u_2 \|_{F^{0,\frac 12}} \|u_3 \|_{L^q_tL^2_x} \nonumber \\
   \lesssim & \sum_{l\ge -4} 2^{-l} N_1^{-1/2}N_3^{\frac{1-\alpha}2} \|u_1 \|_{L^p_t L^2_x } \|Q_{2^lN_1N_3^\alpha} u_2 \|_{F^{0,\frac 12}} \|u_3 \|_{L^q_tL^2_x} \; .
\end{align}
In the same way, we get that the contribution $I_t^{3,low}$ of the third term to $I_t^{low}$ is bounded by
\begin{align}
 |I_t^{3,low}| \lesssim &   N_1^{1/2}(N_1 N_3^\alpha)^{-1} N_3^{\frac{\alpha+1}2}
 \|u_1 \|_{L^p_t L^2_x } \| u_2 \|_{L^q_tL^2_x} \|Q_{\sim N_1N_3^\alpha} u_3 \|_{F^{0,\frac 12}} \nonumber \\
   \lesssim & N_1^{-1/2}N_3^{\frac{1-\alpha}2} \|u_1 \|_{L^p_t L^2_x } \| u_2 \|_{L^q_tL^2_x} \|Q_{\sim N_1N_3^\alpha} u_3 \|_{F^{0,\frac 12}} \; .
\end{align}
Gathering all these estimates, we obtain the desired bound.
\end{proof}
\begin{proposition} \label{YYY}
Let $ 0<T<2$, $ 1<\alpha\le 2 $, $ s\ge 1-\alpha/2 $   and $ u\in L^\infty_T H^s $  be a solution to \eqref{bo}. Then  $ u $ belongs to
 $ \widetilde{L^\infty_T} H^s $ and it holds
\begin{equation}\label{estHsnonregular}
\|u\|_{\widetilde{L^\infty_T} H^s}  \lesssim \|u_0\|_{H^s}  +  \|u\|_{L^\infty_T H^{1-\frac{\alpha}{2}}} \|u\|_{Y^s_T} + \|u\|_{L^\infty_T H^s}\|u\|_{Y_T^{1-\frac{\alpha}{2}}}
 \;.
\end{equation}
\end{proposition}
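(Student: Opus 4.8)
The plan is to run the energy argument of Proposition \ref{prou} almost verbatim, but now summing the frequency-localized estimates in $\ell^2_N$ so as to produce the $\widetilde{L^\infty_T}H^s$ norm, and replacing the trilinear bound of Lemma \ref{lemtriest} by its $F^{s,b}$-version, Lemma \ref{lemtriestY}. Applying $P_N$ to \eqref{bo}, taking the $H^s$ scalar product with $P_Nu$ and integrating on $]0,t[$ gives, for each dyadic $N$,
$$
\|P_N u\|^2_{L^\infty_T H^s}\lesssim\|P_N u_0\|_{H^s}^2 + \sup_{t\in]0,T[}\langle N\rangle^{2s}\Bigl|\int_0^t\int_\K P_N(u^2)\,\partial_x P_N u\Bigr|,
$$
and summing over $N$ reduces everything to the control of $I:=\sum_N\langle N\rangle^{2s}\sup_t|\int_0^t\int_\K P_N(u^2)\partial_xP_Nu|$. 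After replacing $u$ by an extension $\ut$ with $\|\ut\|_{Y^s}\lesssim\|u\|_{Y^s_T}$, the decompositions \eqref{decdy}, \eqref{taylorphi} and the pseudo-product rewriting \eqref{chi1}--\eqref{chi3} are exactly those of Proposition \ref{prou} and yield, as in \eqref{estI1},
$$
I\lesssim\sum_{N}\sum_{N_1\gtrsim N}N\langle N_1\rangle^{2s}\sup_{t\in]0,T[}|I_t(u_N,u_{\sim N_1},u_{N_1})|.
$$

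I would then split this sum into $N\le 2^9$ and $N>2^9$. For $N\le2^9$ I use \eqref{estitlow} with $u_N$ in $L^\infty_tL^2$ and the two high-frequency factors in $L^2_{tx}$; since $N^{3/2}\lesssim1$ and $\|u_N\|_{L^\infty_tL^2}\lesssim\|u\|_{L^\infty_TH^{1-\frac{\alpha}{2}}}$, the key point is the Plancherel identity $\|u\|_{\widetilde{L^2_T}H^s}=\|u\|_{L^2_TH^s}\lesssim\|u\|_{L^\infty_TH^s}$, which collapses the two high factors into $\|u\|_{L^\infty_TH^s}^2$ instead of a spurious $\|u\|_{\widetilde{L^\infty_T}H^s}^2$. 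For $N>2^9$ one has $N_1\ge N>2^9\ge N_1^{\frac23(1-\alpha)}\wedge2^9$, so Lemma \ref{lemtriestY} applies with its triple $(N_1,N_2,N_3)$ taken equal to $(N,N_1,N_1)$, and its four terms are summed against the weight $N\langle N_1\rangle^{2s}$.

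The organizing principle is to always put the low-regularity norm ($H^{1-\frac{\alpha}{2}}$ or $Y^{1-\frac{\alpha}{2}}$) on the lowest-frequency factor $u_N$ and $H^s$-type norms ($F^{s,1/2}\subset Y^s$, $L^2_TH^s$, or $\widetilde{L^\infty_T}H^s$) on the two high factors. The first two terms of Lemma \ref{lemtriestY} carry a $\|\cdot\|_{F^{0,1/2}}$ norm on a high factor and, after a Cauchy--Schwarz in the modulation index $l$ to sum $\sum_l2^{-l}$ and rebuild the full $F^{s,1/2}$ norm, give a contribution of the type $\|u\|_{L^\infty_TH^{1-\frac{\alpha}{2}}}\|u\|_{Y^s_T}\|u\|_{\widetilde{L^\infty_T}H^s}$; the third term, which carries $\|u_N\|_{F^{0,1/2}}$, feeds $Y^{1-\frac{\alpha}{2}}$ and produces $\|u\|_{L^\infty_TH^s}\|u\|_{Y_T^{1-\frac{\alpha}{2}}}\|u\|_{\widetilde{L^\infty_T}H^s}$; the last, purely $L^\infty_tL^2$, term is again of the first type. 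Gathering these bounds into $\|u\|_{\widetilde{L^\infty_T}H^s}^2\lesssim\|u_0\|_{H^s}^2+I$ and using Young's inequality to absorb the single factor $\|u\|_{\widetilde{L^\infty_T}H^s}$ gives \eqref{estHsnonregular}; the qualitative statement $u\in\widetilde{L^\infty_T}H^s$ (and the legitimacy of the absorption) is obtained by first running the estimate on smooth solutions and passing to the limit.

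The main obstacle is the bookkeeping of the dyadic exponents. After rescaling the three factors to their target norms, one must verify that the residual powers of $N$ and $N_1$ coming from each of the four terms of Lemma \ref{lemtriestY}, combined with the weight $N\langle N_1\rangle^{2s}$, form a genuinely summable double series. Each term in fact produces an off-diagonal gain $(N/N_1)^{\delta}$ with $\delta>0$ together with a net negative power of $N_1$ (for instance a factor $2^{-j(\alpha-1)/2}$ with $N=2^{-j}N_1$ for the first term, and $N_1^{-1/8}$ for the last), and it is exactly these gains --- available because $\alpha>1$ makes the resonance relation \eqref{resonance} strong enough --- that let a Cauchy--Schwarz in the dyadic frequencies reconstruct the $\ell^2$ norms $\widetilde{L^\infty_T}H^s$ and $Y^s$ with no logarithmic loss. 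Checking that this closes all the way down to the endpoint $s=1-\frac{\alpha}{2}$ is the only delicate computation; the strictness $\alpha>1$ is what guarantees convergence of the resulting geometric series.
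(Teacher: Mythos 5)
Your overall architecture (energy identity, extension, reduction to the trilinear sum $I$, the splitting $N\le 2^9$ versus $N>2^9$, Lemma \ref{lemtriestY} applied to the triple $(N,N_1,N_1)$) coincides with the paper's proof, and your low-frequency and exponent computations (the gains $(N/N_1)^{(\alpha-1)/2}$, $(N/N_1)^{5\alpha/8}$, $N_1^{-1/8}$) are correct. The gap is in how you close the high-frequency sums. By placing the $\ell^2_{N_1}$-summation on the factors $\|u_{N_1}\|_{L^\infty_t H^s}$, you arrive at a bound of the form
$$
\|u\|^2_{\widetilde{L^\infty_T}H^s}\lesssim\|u_0\|^2_{H^s}+\Bigl(\|u\|_{L^\infty_TH^{1-\frac\alpha2}}\|u\|_{Y^s_T}+\|u\|_{L^\infty_TH^s}\|u\|_{Y^{1-\frac\alpha2}_T}\Bigr)\|u\|_{\widetilde{L^\infty_T}H^s},
$$
which is self-referential: absorbing the factor $\|u\|_{\widetilde{L^\infty_T}H^s}$ by Young's inequality requires knowing beforehand that this quantity is finite, and that finiteness is precisely the qualitative assertion of the proposition. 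Your remedy --- prove the estimate for smooth solutions and pass to the limit --- is circular for the purpose this proposition serves: it must apply to an \emph{arbitrary} solution $u\in L^\infty_TH^s$ (this is what makes the uniqueness of Section \ref{section42} unconditional), and such a solution cannot be realized as a limit of smooth solutions before uniqueness and continuous dependence are established. Nor does Lemma \ref{YY} give the finiteness for free: $F^{s,1/2}$ only controls $\widetilde{L^\infty_t}H^{\sigma}$ at a lower regularity $\sigma$ (a loss of up to $\frac{1+\alpha}{2}$ derivatives), and recovering that loss is exactly the job of the energy estimate.

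The paper's proof avoids this by a different bookkeeping of the dyadic sums, and this is the one idea your write-up is missing. In \eqref{416} the discrete Young inequality in $N_1$ followed by Cauchy--Schwarz in $N$ is arranged so that the $\ell^\infty_{N_1}$ norm (not the $\ell^2_{N_1}$ norm) falls on the factors $\|u_{N_1}\|_{L^\infty_tH^s}$, while the $\ell^2$ summations fall on factors that are square-summable for free: the $L^2_t$-based norms, via Plancherel and the compact time support of the extension ($\sum_N\|u_N\|^2_{L^2_tH^\sigma}\lesssim\|u\|^2_{L^\infty_tH^\sigma}$), and the $F^{\sigma,1/2}$ norms, which are $\ell^2$-based in frequency by construction. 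This is precisely why Lemma \ref{lemtriestY} is stated with the flexibility $(p,q)\in\{(2,\infty),(\infty,2)\}$. The resulting bound \eqref{N} involves only $\|u\|_{L^\infty_TH^s}$, $\|u\|_{Y^s_T}$, $\|u\|_{L^\infty_TH^{1-\frac\alpha2}}$ and $\|u\|_{Y^{1-\frac\alpha2}_T}$ on the right-hand side --- all finite by hypothesis and Lemma \ref{YY} --- so both the membership $u\in\widetilde{L^\infty_T}H^s$ and \eqref{estHsnonregular} follow at once, with no absorption and no approximation argument. Rearranging which factor carries the $\ell^\infty_{N_1}$ bound and which carry the $\ell^2$ bounds turns your argument into a complete proof.
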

\begin{proof}
Applying the operator $ P_N $ with $ N>0  $ dyadic  to equation \eqref{bo}, taking the $H^s$ scalar product with $ P_N u $ and integrating
 on $ ]0,t[ $  we obtain
\begin{equation}\label{415}
\|P_N u\|^2_{L^\infty_T H^s} \lesssim \|P_N u_0\|_{H^s}^2 + \sup_{t\in ]0,T[}  \langle N\rangle^{2s} \Bigl| \int_0^t \int_{\R}  P_N(u^2) \partial_xP_N u \Bigr| .
\end{equation}
We take an extension  $\tilde{u} $ of $ u $ supported in time in $ ]-4,4[ $  such that $ \|\tilde{u}\|_{Y^s} \lesssim \|u\|_{Y^s_T} $ . To simplify the notation we drop the tilde in the sequel.\\
We infer from (\ref{estI1}) that it suffices to estimate
$$
I = \sum_{N>0}\sum_{N_1\gtrsim  N}  N\langle N_1\rangle^{2s}\sup_{t\in ]0,T[} |I_t(u_N, u_{\sim N_1}, u_{N_1})|.
$$
The low frequencies part $N\le 2^9$ is estimated exactly as in (\ref{estI2}) by
$$
\|u\|_{L^\infty_tL^2_x}\|u\|_{L^\infty_tH^s}^2.
$$
On  the other hand, the contribution of the sum over $N>2^9$ is controlled thanks to Lemma \ref{lemtriestY} by
\begin{align}
\sum_{N>2^9}\sum_{N_1\gtrsim N}& \Big[ \left(\frac N{N_1}\right)^{\frac{\alpha-1}2} \|u_N\|_{L^2_tH^{1-\frac \alpha 2}} \|u_{N_1}\|_{L^\infty_tH^s} \|u_{N_1}\|_{F^{s,\frac 12}} \nonumber \\
&\quad + \left(\frac N{N_1}\right)^{5\alpha/8} \|u_N\|_{F^{1-\frac \alpha 2,\frac 12}} \|u_{N_1}\|_{L^2_tH^s} \|u_{N_1}\|_{L^\infty_t H^s}\nonumber \\
&\quad + N^{\frac \alpha 2-\frac 14} N_1^{\frac 18-\frac \alpha 2} \|u_N\|_{L^\infty_tH^{1-\frac \alpha 2}} \|u_{N_1}\|_{L^\infty_t H^s}^2 \Big]\ \nonumber\\
&\lesssim \|u\|_{Y^{1-\frac{\alpha}{2}}} \|u\|_{L^\infty_t H^s}^2 + \|u\|_{L^\infty_t H^{1-\frac{\alpha}{2}}}
 \|u\|_{L^\infty_t H^s} \|u\|_{Y^s}, \label{416}
\end{align}
where we use discrete Young's inequality in $N_1$ and then Cauchy-Schwarz in $N$ to bound the first two terms.\\
Gathering the above estimates we eventually obtain
\begin{equation}\label{N}
I\lesssim \|u\|_{Y^{1-\frac{\alpha}{2}}_T} \|u\|_{L^\infty_T H^s}^2 + \|u\|_{L^\infty_T H^{1-\frac{\alpha}{2}}}
 \|u\|_{L^\infty_T H^s} \|u\|_{Y^s_T} \, ,
\end{equation}
which  completes the proof of the proposition.

\end{proof}
\subsection{Estimates on the difference of two solutions}
First we introduce the function spaces where we will estimate the difference of two solutions of \eqref{bo}. Contrary to the regular case, we will have to work in a function space that put a weight on the very low frequencies.
For $ \theta\in \R $ we denote by $ \overline{H}^\theta $ the completion of $  {\mathcal S}(\R) $  for the norm
$$
\|\varphi\|_{\overline{H}^\theta} := \| \langle |\xi|^{-1/2}\rangle \langle \xi \rangle^\theta \hat{\varphi} \|_{L^2}
$$
Then we define the space  $ \widetilde{L^{\infty}_t} \overline{H}^{\theta} $ by
\begin{equation}
\|w\|_{\widetilde{L^{\infty}_t}\overline{H}^{\theta}} :=\Bigr( \sum_{N-dyadic>0}  \|w_N \|_{L^\infty_t  \overline{H}^\theta}^2 \Bigr)^{1/2} \; .
\end{equation}
We then define the function spaces $ \tilde{Y}^\theta $ and $ Z^\theta $, $ \theta\in\R $, by respectively
$$
 \tilde{Y}^\theta= \widetilde{L^\infty_t} H^\theta\cap F^{\theta,1/2} \;  \mbox{ and }
Z^\theta = \widetilde{L^\infty_t}\overline{H}^\theta\cap F^{\theta,1/2} \; ,
$$
with $ F^{\theta,b} $ defined in \eqref{defF}.

If $u,v\in L^\infty_T H^s $ are two solutions of (\ref{bo}) with $s\ge 1-\alpha/2$,  then according to  Lemma \ref{YY} and Proposition \ref{YYY} we know that
 $ u $ and $ v$ belong to $ Y^s_T \cap  \widetilde{L^\infty_T} H^s $. Moreover, using again the extension operator $ \rho_T $ it is easy to check that
 \begin{equation}\label{tilde}
 Y^s_T \cap  \widetilde{L^\infty_T} H^s \hookrightarrow \tilde{Y}^s_T
 \end{equation}
 with an embedding constant that does not depend on $ 0<T\le 2 $. Hence, $ u $ and $ v $ belong to $\tilde{Y}^s_T$. Assuming that $ u_0-v_0 \in 
  \overline{H}^s $, we claim  that the difference $ u-v $ belongs to $ Z^s_T $. Indeed, according to the above definitions of
   $ \tilde{Y}^s $ and $ Z^s $ , it suffices to check that $ P_{1} (u-v) $ belongs to $ \widetilde{L^\infty_T} \overline{H}^s$. But it is straightforward, since by the Duhamel formula for any dyadic integer $ 0<N<1 $ it holds 
   $$
   \|P_N(u-v)\|_{L^\infty_T \overline{H}^s} \lesssim \|u_0-v_0 \|_{  \overline{H}^s}+ N (\|u\|^2_{L^\infty_T L^2_x} + \|v\|^2_{L^\infty_T L^2_x}) \; .
   $$
 We are thus allowed to estimate the difference $w=u-v$ in the space $Z^{s-\frac{3}{2}+\frac{\alpha}{2}}_T$. 
\begin{proposition}\label{prop-wF}
Let $ 0<T<1$, $1<\alpha\le 2$, $s\ge 1-\alpha/2$ and $u,v \in L^\infty_T H^s $  be two solutions to (\ref{bo}) on $]0,T[$. Then we have
\begin{equation}\label{wF1}
\|u-v\|_{Z^{s-\frac{3}{2}+\frac{\alpha}{2}}_T} \lesssim \|u-v\|_{L^\infty_T \overline{H}^{s-\frac{3}{2}+\frac{\alpha}{2}}} + \|u+v\|_{\tilde{Y}^s_T} \|u-v\|_{Z^{-1/2}_T}
+ \|u+v\|_{\tilde{Y}^{1-\frac{\alpha}{2}}}\|u-v\|_{Z^{s-\frac{3}{2}+\frac{\alpha}{2}}_T}\, .
\end{equation}
\end{proposition}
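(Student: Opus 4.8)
The plan is to follow the scheme of Proposition \ref{prodif}, adapted to the non-regular framework and to the low-frequency weight carried by the spaces $\overline{H}^\theta$. Write $w=u-v$ and $z=u+v$; subtracting the two copies of \eqref{bo} shows that $w$ solves \eqref{eq-diff}, that is
\begin{equation*}
w_t+L_{\alpha+1}w=\partial_x(zw)\, .
\end{equation*}
Set $\theta:=s-\frac32+\frac\alpha2$, so that $\theta\ge-\frac12$ when $s\ge1-\frac\alpha2$. Since $Z^\theta=\widetilde{L^\infty_t}\overline{H}^\theta\cap F^{\theta,1/2}$, I would estimate separately the Bourgain component $\|w\|_{F^{\theta,1/2}_T}$ and the energy component $\|w\|_{\widetilde{L^\infty_T}\overline{H}^\theta}$.

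For the $F^{\theta,1/2}_T$-part I would argue exactly as in the first part of Lemma \ref{YY}: after extending $w$ off $]0,T[$ by $\rho_T$, the Duhamel formula and the standard linear estimates in Bourgain's spaces reduce the problem to bounding $\partial_x(zw)$ in $X^{\theta-\frac{1+\alpha}2,0}_T$, hence $zw$ in $L^2_TH^{\theta+\frac{1-\alpha}2}$. Splitting $zw$ into $(LH)$, $(HL)$ and $(HH)$ dyadic pieces and using product estimates in Sobolev spaces (as in \eqref{p1}) produces the two bilinear bounds $\|z\|_{\tilde Y^s_T}\|w\|_{Z^{-1/2}_T}$ and $\|z\|_{\tilde Y^{1-\frac\alpha2}}\|w\|_{Z^\theta_T}$, which are of the shape allowed in the right-hand side of \eqref{wF1}.

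For the energy part I would run the frequency-localized estimate of Proposition \ref{YYY}, but in $\overline{H}^\theta$. Applying $P_N$, taking the $\overline{H}^\theta$ inner product with $P_Nw$ and integrating on $]0,t[$ (the skew-adjoint part $L_{\alpha+1}$ drops out) yields
\begin{equation*}
\|P_Nw\|_{L^\infty_T\overline{H}^\theta}^2\lesssim\|P_Nw_0\|_{\overline{H}^\theta}^2+\cro{N}^{2\theta}\cro{N^{-1/2}}^2\sup_{t\in]0,T[}\Bigl|\int_0^t\int_\R P_N(zw)\,\partial_xP_Nw\Bigr|\, .
\end{equation*}
I would then insert the decomposition \eqref{dec1} of $P_N(zw)$, symmetrize via \eqref{PiSym}, and reduce to the three trilinear sums $J_1,J_2,J_3$ of \eqref{J1} in which the derivative is placed on the lowest-frequency factor and the roles of $z$ and the two copies of $w$ are permuted. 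The output frequencies $N\le2^9$ are handled with the crude bound \eqref{estitlow}, while for $N>2^9$ I would apply Lemma \ref{lemtriestY} and then sum by discrete Young in $N_1$ and Cauchy--Schwarz in $N$. Distributing the regularity budget so that either $z$ sits at the high level $s$ against a copy of $w$ at the low level $-\frac12$, or $z$ sits at the low level $1-\frac\alpha2$ against a copy of $w$ at level $\theta$, reproduces precisely the last two terms of \eqref{wF1}.

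The genuinely new difficulty, absent from Proposition \ref{prodif}, is the weight $\cro{|\xi|^{-1/2}}$ built into $\overline{H}^\theta$ and $Z^\theta$. Whenever the output frequency $N$ is $\lesssim1$ this weight behaves like $N^{-1/2}$ and has to be compensated; here the derivative $\partial_x$ gains a favourable power of $N$ at low frequency, and the high-high interactions feeding a low output are controlled using the embedding \eqref{tilde} that places $z$ in $\tilde Y^s_T$. The delicate bookkeeping is precisely this apportioning of the regularity between the factors, together with checking that the frequency regions not covered by the hypotheses of Lemma \ref{lemtriestY} fall under \eqref{estitlow}; once this is done the three contributions match \eqref{wF1}, the term $\|z\|_{\tilde Y^{1-\frac\alpha2}}\|w\|_{Z^\theta_T}$ being the one that will later be absorbed in the left-hand side for $T$ small.
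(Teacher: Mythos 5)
There is a genuine gap, and it sits exactly at the heart of the paper's proof: your treatment of the $F^{\theta,1/2}$-component ($\theta=s-\frac32+\frac\alpha2$) by ``product estimates in Sobolev spaces'' cannot work for the high-high interactions. After Duhamel you must bound $\sum_{N_1\gg N}P_N(z_{N_1}w_{N_1})$ in (essentially) $L^2_TH^{s-1}$, with $z$ at regularity $s$ and $w$ only at regularity $\theta$. A Bernstein/H\"older bound gives
\begin{equation*}
\langle N\rangle^{s-1}N^{1/2}\|P_N(z_{N_1}w_{N_1})\|_{L^2_x}\lesssim \langle N\rangle^{s-1}N^{1/2}\,N_1^{-(s+\theta)}\|z_{N_1}\|_{H^s}\|w_{N_1}\|_{H^\theta},
\end{equation*}
and $s+\theta=2s-\frac{3-\alpha}{2}$ is \emph{negative} in the range you must cover: at the endpoint $s=1-\frac\alpha2$ one has $s+\theta=\frac{1-\alpha}{2}<0$, so the factor $N_1^{-(s+\theta)}=N_1^{\frac{\alpha-1}{2}}$ grows and the sum over $N_1\gg N$ diverges. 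No Sobolev product estimate can close this (for $s+\theta<0$ the product of the two dyadic pieces is not even summable), which is precisely why the paper spends the bulk of the proof of Proposition \ref{prop-wF} on this term: it invokes Lemma \ref{resonance.lem} to split into the modulation regions $L\gtrsim NN_1^\alpha$, $L_1\gtrsim NN_1^\alpha$, $L_2\sim NN_1^\alpha$, uses the embedding $X^{s-\frac{13}{8}+\frac{3\alpha}{8},-\frac38}\hookrightarrow F^{s-\frac32+\frac\alpha2,-1/2}$ to convert largeness of the modulation into a gain of spatial derivatives (the factor $(NN_1^\alpha)^{-3/8}$ beats $N_1^{\frac{\alpha-1}{2}}$ since $\alpha<4$), and applies Lemma \ref{lemB} (estimate \eqref{B1}) when the large modulation falls on an input factor. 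Note that your citation of \eqref{p1} is misleading even as an analogy: in Lemma \ref{YY}, \eqref{p1} handles only the $(LH)$ interactions, and the $(HH)$ ones there already required the modulation analysis \eqref{lk1}--\eqref{lk3}. Your $(LH)$ and $(HL)$ estimates, by contrast, are fine and agree with the paper's.

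A secondary, structural point: the paper's proof of Proposition \ref{prop-wF} contains \emph{no} energy estimate at all. The $\widetilde{L^\infty_T}\overline{H}^\theta$ component of the $Z^\theta_T$-norm is simply carried as the first term on the right-hand side of \eqref{wF1}; its estimate in terms of the initial data is the object of the separate Proposition \ref{PP1}, and the two are combined afterwards. Your plan to run the frequency-localized $\overline{H}^\theta$ energy estimate with the trilinear sums $J_1,J_2,J_3$ inside this proof essentially re-proves Proposition \ref{PP1}; moreover it produces bounds quadratic in the norms of $w$ (energy estimates control $\|w\|^2$), so recovering the bilinear form of \eqref{wF1} would force additional Young/absorption arguments. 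That deviation is not fatal, but it obscures the fact that the only real content of Proposition \ref{prop-wF} is the Bourgain-norm estimate --- and that is exactly the part your proposal does not close.
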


\begin{proof}
Recall that $ w=u-v $ satisfies  \eqref{eq-diff} with $z=u+v$.
We extend $ w $ from $ (0,T) $ to $ \R$ by using the extension operator $ \rho_T $ defined in \eqref{defrho}. On account of the uniform bounds on $
\rho_T $ (see the paragraph just after \eqref{defrho}), it  remains to estimate the $ F^{s-\frac{3}{2}+\frac{\alpha}{2},\alpha,\frac{1}{2}}_T $-norm  of $ w $.
From classical linear estimates in the framework of Bourgain's spaces, the Duhamel formulation associated with (\ref{eq-diff})  leads  to
\begin{equation}\label{est-wF}
\|w\|_{F^{s-\frac{3}{2}+\frac{\alpha}{2}, 1/2}_T} \lesssim \|w_0\|_{H^{s-\frac{3}{2}+\frac{\alpha}{2}}} + \|\partial_x(zw)\|_{F^{s-\frac{3}{2}+\frac{\alpha}{2},-1/2}_T}.
\end{equation}
Let $\zt$ and $\wt$ be time extensions of $z$ and $w$ satisfying $\|\zt\|_{\tilde{Y}^s}\lesssim \|z\|_{\tilde{Y}^s_T}$ and $\|\wt\|_{Z^{s-\frac{3}{2}+\frac{\alpha}{2}}}\lesssim \|w\|_{Z^{s-\frac{3}{2}+\frac{\alpha}{2}}_T}$. To simplify the notation we drop the tilde in the sequel. From (\ref{est-wF}) we see that it suffices to estimate
$$
\|\partial_x(zw)\|_{F^{s-\frac{3}{2}+\frac{\alpha}{2},\alpha,-1/2}} \lesssim \left(\sum_N \|P_N\partial_x (zw)\|_{F^{s-\frac{3}{2}+\frac{\alpha}{2},-1/2}}^2\right)^{1/2}.
$$
We first estimate the (low-high) contribution  $P_N(P_{\lesssim N}z P_Nw)$:
\begin{align*}
\|\partial_xP_N(P_{\lesssim N}z P_Nw)\|_{F^{s-\frac{3}{2}+\frac{\alpha}{2},\alpha,-1/2}} &\lesssim  \sum_{N_1\lesssim N}N\|P_N(P_{N_1}z P_Nw)\|_{X^{s-2,0}}\\
&\lesssim \sum_{N_1\lesssim N}N_1^{1/2}N \cro{N}^{s-2} \|P_{N_1}z\|_{L^\infty_t L^2_x} \|P_Nw\|_{L^2_t L^2_x}\\
&\lesssim  \|P_Nw\|_{L^2_t H^{s-\frac{3}{2}+\frac{\alpha}{2}}}
\sum_{N_1\lesssim N}\Bigl( \frac{N_1}{\cro{N}}\Bigr)^{\frac{\alpha-1}{2}} \|P_{N_1} z\|_{L^\infty_tH^{1-\frac{\alpha}{2}}}\\
&\lesssim \|z\|_{L^\infty_tH^{1-\frac{\alpha}{2}}} \|P_Nw\|_{L^\infty_t {H}^{s-\frac{3}{2}+\frac{\alpha}{2}}}.
\end{align*}
Similarly, the (high-low) interactions are estimated as follows:
\begin{align*}
\|\partial_xP_N(P_Nz P_{\lesssim N}w)\|_{F^{s-\frac{3}{2}+\frac{\alpha}{2},-1/2}} &\lesssim N\|P_N(P_Nz P_{\lesssim N}w)\|_{X^{s-2,0}}\\
&\lesssim   \|P_Nz\|_{L^2_tH^s}  \sum_{N_1\lesssim N} \Bigl( \frac{N_1}{\cro{N}}\Bigr)^{1/2}  \|P_{N_1}w\|_{L^\infty_t H^{-1/2}}\\
&\lesssim \|P_Nz\|_{L^2_tH^s} \|w\|_{L^\infty_t {H}^{-\frac{1}{2}}}.
\end{align*}
Now we deal with the high-high interactions term
\begin{multline*}
\|\partial_xP_N(P_{\gg N}z P_{\gg N}w)\|_{F^{s-\frac{3}{2}+\frac{\alpha}{2},-\frac 12}} \\ \lesssim \sum_{N_1\gg N} N \left\|
\sum_{(L,L_1,L_2)\mbox{ \tiny{satisfying} } \eqref{resonance}} \hspace*{-10mm}\partial_xP_NQ_L(Q_{L_1}z_{N_1}Q_{L_2}w_{N_1})\right\|_{F^{s-\frac{3}{2}+\frac{\alpha}{2},-\frac 12}},
\end{multline*}
We may assume that $N_1\gg 1$ since otherwise, it holds $N\ll N_1\lesssim 1$ and we have
$$
\|P_{\lesssim 1}\partial_x(P_{\lesssim 1}z P_{\lesssim 1}w)\|_{F^{s-\frac{3}{2}+\frac{\alpha}{2},-1/2}} \lesssim
 \|P_{\lesssim 1}z\|_{L^\infty_t L^2} \|P_{\lesssim 1}w\|_{L^\infty_t H^{-\frac{1}{2}}}.
$$
For  $ N_1 \gg 1 $, we will  take advantage of the fact that $ X^{s-\frac{13}{8}+\frac{3\alpha}{8},-\frac{3}{8}}\hookrightarrow F^{s-\frac{3}{2}+\frac{\alpha}{2},-1/2}$. The  contribution of the sum over   $L\gtrsim NN_1^\alpha$ can be thus controlled by
\begin{align*}
&\sum_{N_1\gg N}\|\partial_xP_NQ_{\gtrsim NN_1^\alpha}(z_{N_1} w_{N_1})\|_{F^{s-\frac{3}{2}+\frac{\alpha}{2},-1/2}} \\
&\quad \lesssim \sum_{N_1\gg N} N \|P_NQ_{\gtrsim NN_1^\alpha}(z_{N_1} w_{N_1})\|_{X^{s-\frac{13}{8}+\frac{3\alpha}{8},-3/8}} \\
&\quad \lesssim \sum_{N_1\gg N}\sum_{L\gtrsim NN_1^\alpha} N \cro{N}^{s-\frac{13}{8}+\frac{3\alpha}{8}} L^{-3/8} \|P_NQ_L(z_{N_1}w_{N_1})\|_{L^2}\\
&\quad \lesssim \sum_{N_1\gg N} N^{3/2} \cro{N}^{s-\frac{13}{8}+\frac{3\alpha}{8}} (NN_1^\alpha)^{-3/8} N_1^{\frac{1}{2}-s}\|z_{N_1}\|_{L^2_tH^s} \|w_{N_1}\|_{L^\infty_t H^{-1/2}}\\
&\quad \lesssim \sum_{N_1\gg N} (N/N_1)^{1/2-\alpha/8} \Bigl( \frac{\cro{N}}{\cro{N_1}}\Bigr)^{s-1+\frac{\alpha}{2}}   \|z_{N_1}\|_{L^2_tH^s} \|w_{N_1}\|_{L^\infty_tH^{-1/2}} \\
& \quad \lesssim  \delta_N \| z\|_{L^2_t H^s}  \|w\|_{L^\infty_t H^{-1/2}},
\end{align*}
where $ \|(\delta_{2^j})_{j} \|_{l^2(\Z)} \lesssim 1 $.
The contribution of  the region ($ L \ll NN_1^\alpha $  and $L_1\gtrsim NN_1^\alpha$) is estimated thanks to \eqref{B1} by
\begin{align*}
&\sum_{N_1\gg N}\|\partial_xP_NQ_{\ll NN_1^\alpha}(Q_{\gtrsim NN_1^\alpha}z_{N_1} w_{N_1})\|_{X^{s-\frac{13}{8}+\frac{3\alpha}{8},-3/8}} \\
&\quad \lesssim \sum_{N_1\gg N} N \cro{N}^{s-\frac{13}{8}+\frac{3\alpha}{8}} \|P_N(Q_{\gtrsim NN_1^\alpha}z_{N_1}w_{N_1})\|_{L^2}\\
&\quad \lesssim \sum_{N_1\gg N}  N^{3/2} \cro{N}^{s-\frac{13}{8}+\frac{3\alpha}{8}}(N N_1^\alpha)^{-1}N_1^{1-s+\frac{\alpha}{2}}
\|Q_{\gtrsim NN_1^\alpha}z_{N_1}\|_{F^{s,\frac{1}{2}}} \|w_{N_1}\|_{L^\infty_tH^{-1/2}} \\
&\quad \lesssim \sum_{N_1\gg N}\Bigl( \frac{N}{\cro{N}}\Bigr)^{1/2} \cro{N}^{-\frac{1+\alpha}{8}}
  \Bigl( \frac{\cro{N}}{\cro{N_1}}\Bigr)^{s-1+\frac{\alpha}{2}}
\|Q_{\gtrsim NN_1^\alpha}z_{N_1}\|_{F^{s,\frac{1}{2}}} \|w_{N_1}\|_{L^\infty_tH^{-1/2}} \\
 & \quad \lesssim  \delta_N  \| z\|_{Y^s} \|w\|_{\widetilde{L^\infty_t}H^{-1/2}}
\end{align*}
where $ \|(\delta_{2^j})_{j} \|_{l^2(\Z)} \lesssim 1 $.
Finally   the contribution of the last region  can be bounded thanks to \eqref{B1} by
\begin{align*}
&\sum_{N_1\gg N}\|\partial_xP_NQ_{\ll NN_1^\alpha}(Q_{\ll N N_1^{\alpha}} z{N_1}\, Q_{\sim NN_1^\alpha}w_{N_1})\|_{X^{s-\frac{13}{8}+\frac{3\alpha}{8},-\frac{3}{8}}} \\
&\quad \lesssim \sum_{N_1\gg N} N \cro{N}^{s-\frac{13}{8}+\frac{3\alpha}{8}} \|P_NQ_{\ll NN_1^\alpha}(Q_{\ll N N_1^{\alpha}} z_{N_1}\, Q_{\sim NN_1^\alpha}w_{N_1})\|_{L^2}\\
&\quad \lesssim \sum_{N_1\gg N}  N^{3/2} \cro{N}^{s-\frac{13}{8}+\frac{3\alpha}{8}} N_1^{-s} (N N_1^\alpha)^{-1} N_1^{1+\alpha/2}
\|Q_{\ll NN_1^\alpha}z_{N_1}\|_{L^\infty_t H^s} \|Q_{\sim NN_1^\alpha} w_{N_1}\|_{F^{-1/2,1/2}} \\
&\quad \lesssim \sum_{N_1\gg N}\Bigl( \frac{N}{\cro{N}}\Bigr)^{1/2} \cro{N}^{-\frac{1+\alpha}{8}}
  \Bigl( \frac{\cro{N}}{\cro{N_1}}\Bigr)^{s-1+\frac{\alpha}{2}}
\|z_{N_1}\|_{L^\infty_t H^s} \|w_{N_1}\|_{F^{-1/2,1/2}} \\
& \quad \lesssim  \delta_N \|z\|_{\widetilde{L^\infty_t} H^s} \|w\|_{Z^{-1/2}}
\end{align*}
which is acceptable. This concludes the proof of Proposition \ref{prop-wF}.
\end{proof}

\begin{proposition}\label{PP1}
Let $ 1\le \alpha \le 2 $, $ 0<T<2$ and $ u,v \in L^\infty_T H^s $ with $ s\ge 1-\alpha/2 $ be two solutions to \eqref{bo}. Then it holds\footnote{ We include the case $ \alpha=1 $ here  since it does not lead to additional difficulties and  will be useful  in the appendix to prove LWP for $ (\alpha,s)=(1,1/2)$.}
\begin{equation}\label{estdiffHscritical}
\|u-v\|_{\widetilde{L^{\infty}_T}\overline{H}^{s-\frac{3}{2}+\frac{\alpha}{2}} }  \lesssim \|u_0-v_0\|_{\overline{H}^{s-\frac{3}{2}+\frac{\alpha}{2}} }  +
\|u+v\|_{Y^s_T}
\|u-v\|_{Z^{s-\frac{3}{2}+\frac{\alpha}{2}}_T} \;.
\end{equation}
\end{proposition}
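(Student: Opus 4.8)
The plan is to run the weighted energy method directly on the difference equation \eqref{eq-diff}, in exact parallel with Propositions \ref{prou}, \ref{prodif} and \ref{YYY}; the only genuinely new feature is the low-frequency weight $\langle|\xi|^{-1/2}\rangle$ built into the $\overline{H}^{\theta}$ norm, where I abbreviate $\theta:=s-\frac{3}{2}+\frac{\alpha}{2}$. Recall that $w=u-v$ solves $w_t+L_{\alpha+1}w=\partial_x(zw)$ with $z=u+v$. After extending $z$ and $w$ in time by the operator $\rho_T$ of \eqref{defrho} (so that, thanks to Lemma \ref{YY}, Proposition \ref{YYY} and the embedding \eqref{tilde}, $z\in\tilde{Y}^s_T$ and $w\in Z^\theta_T$ with the corresponding bounds), I would apply $P_N$, take the $\overline{H}^{\theta}$ scalar product with $P_Nw$ and integrate on $]0,t[$. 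Since $L_{\alpha+1}$ is the multiplier by the purely imaginary odd symbol $ip_{\alpha+1}$, it is skew-adjoint for the $\overline{H}^\theta$ inner product and its contribution vanishes, leaving
\[
\|P_Nw\|_{L^\infty_T\overline{H}^\theta}^2\lesssim\|P_Nw_0\|_{\overline{H}^\theta}^2+m_N^2\sup_{t\in]0,T[}\Bigl|\int_0^t\!\!\int_\R P_N(zw)\,\partial_xP_Nw\Bigr|,
\]
where $m_N\sim\cro{N^{-1/2}}\cro{N}^{\theta}$ is the size of the $\overline{H}^\theta$ weight on the block $\{|\xi|\sim N\}$, so $m_N\sim N^{-1/2}$ for $N\lesssim1$ and $m_N\sim N^{\theta}$ for $N\gtrsim1$. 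It then remains to bound $J:=\sum_N m_N^2\sup_t\bigl|\int_0^t\int_\R P_N(zw)\partial_xP_Nw\bigr|$.

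Second, I would reduce $J$ to the trilinear forms $I_t$ exactly as in the proofs of Propositions \ref{prou} and \ref{prodif}. Decomposing $P_N(zw)$ into its low-high, high-low and high-high pieces as in \eqref{dec1}, and using the Taylor expansion \eqref{taylorphi} to transfer the output derivative onto the lowest-frequency factor in the first two pieces (replacing products by the pseudo-products $\Pi_{\chi_i}$ with the bounded symbols \eqref{chi1}--\eqref{chi3}, into which I also absorb the slowly varying ratio $m(\xi)^2/m_N^2$ on the output block), I obtain three contributions $J_1,J_2,J_3$ of the same shape as in \eqref{J1}, now weighted by $m_N^2$ in place of the factor $\cro{N}^{2(s-1)}$ appearing there. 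The region $N\le2^9$ is harmless: it is controlled by \eqref{estPi}--\eqref{estitlow} and Cauchy--Schwarz, the derivative gain and the factor $N_{\min}^{1/2}$ compensating the singular weight $m_N$ for $N\lesssim1$. For $N>2^9$, and as long as the smallest frequency involved is itself large enough, Lemma \ref{lemtriestY} applies directly; a discrete Young inequality in $N_1/N$ followed by Cauchy--Schwarz in $N$ then bounds these contributions by $\|z\|_{\tilde{Y}^s_T}\|w\|_{Z^\theta_T}\|w\|_{\widetilde{L^\infty_T}\overline{H}^\theta}$.

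Third, and this is where the difficulty lies, come the high-high to low interactions ($J_3$ and part of $J_2$), in which $z_{N_1},w_{N_1}$ with $N_1\gg N$ produce the output $w_N$ carrying the amplified weight $m_N\sim N^{-1/2}$, and for which Lemma \ref{lemtriestY} is unavailable, its hypothesis $N\gtrsim N_1^{\frac23(1-\alpha)}$ failing for small $N$. Here I would transcribe the modulation analysis of Proposition \ref{prop-wF} into the $\overline{H}^\theta$ norm: by Lemma \ref{resonance.lem} the integrand vanishes unless \eqref{resonance} holds, so at least one of the three modulations is $\gtrsim NN_1^\alpha$. Splitting $1_{]0,t[}=1_{t,R}^{low}+1_{t,R}^{high}$ at a scale $R\ll NN_1^\alpha$ (Lemmas \ref{ihigh-lem}, \ref{ilow-lem} and \ref{continuiteQ}) and estimating the high-modulation factor in $L^2$ through Lemma \ref{lemB}, one trades a power of the large modulation $NN_1^\alpha$ against the $F^{\theta,\frac12}$-norm. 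The bookkeeping to be checked is that all the powers combine into a prefactor $\delta_N$ with $\|(\delta_{2^j})\|_{\ell^2(\Z)}\lesssim1$ times a genuine positive power of $\cro{N}/\cro{N_1}$: the derivative gain $N$ from $\partial_x$, the two powers of $N_1$ supplied by the two high frequencies, and precisely the modulation gain of Lemma \ref{lemB} are what absorb the singular weight $m_N\sim N^{-1/2}$ at low output frequencies. Summing then yields $J_2,J_3\lesssim\|z\|_{\tilde{Y}^s_T}\|w\|_{Z^\theta_T}\|w\|_{\widetilde{L^\infty_T}\overline{H}^\theta}$.

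Collecting all contributions gives $X^2\lesssim\|w_0\|_{\overline{H}^\theta}^2+\|u+v\|_{Y^s_T}\|w\|_{Z^\theta_T}\,X$ with $X=\|w\|_{\widetilde{L^\infty_T}\overline{H}^\theta}$, and \eqref{estdiffHscritical} follows from the elementary implication $X^2\lesssim A^2+BX\Rightarrow X\lesssim A+B$. I expect the main obstacle to be exactly the weighted high-high to low term of the third step: one must verify that the strong non-resonance \eqref{hyp2}, through the modulation gain of Lemma \ref{lemB}, leaves enough room to overcome the $\langle|\xi|^{-1/2}\rangle$ blow-up of the $\overline{H}^\theta$ norm at low output frequencies, uniformly for $1\le\alpha\le2$ (including the endpoint $\alpha=1$ needed in the appendix).
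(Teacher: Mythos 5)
Your overall architecture is the paper's: the weighted energy inequality with weight $\cro{N^{-1}}\cro{N}^{2\theta}$, the reduction via \eqref{dec1}--\eqref{taylorphi} and pseudo-products to three sums $J_1,J_2,J_3$ of trilinear forms $I_t$, and the use of Lemma \ref{lemtriestY} plus Cauchy--Schwarz wherever its hypothesis $N\gtrsim N_1^{\frac23(1-\alpha)}$ holds. The gap is in your third step, and it is exactly at the point you yourself flag as "to be checked": in the high-high to low region where Lemma \ref{lemtriestY} is unavailable, the modulation mechanism you propose does not exist on a whole subregion. Since dyadic $N$ ranges down to $0$, the region $N\ll N_1^{\frac23(1-\alpha)}$ contains all $N\lesssim N_1^{-\alpha}$, where the resonance size $NN_1^\alpha$ is $\lesssim 1$. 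There, Lemma \ref{resonance.lem} imposes no constraint (modulations of order $1$ already satisfy \eqref{resonance}), Lemma \ref{lemB} is inapplicable (it requires $B\ge 1$, and gives a gain only for $B\gg1$), and the splitting $1_{]0,t[}=1^{low}_{t,R}+1^{high}_{t,R}$ at a scale $R\ll NN_1^\alpha\lesssim 1$ produces no smallness from \eqref{high} since $\|1^{high}_{t,R}\|_{L^1}\lesssim T\wedge R^{-1}=T$. So "trading a power of the large modulation $NN_1^\alpha$ against the $F^{\theta,\frac12}$-norm" is impossible there, and even in the intermediate range $N_1^{-\alpha}\ll N\ll N_1^{\frac23(1-\alpha)}$ the bookkeeping with \eqref{B1} at the endpoint $s=1-\frac\alpha2$ comes out borderline (powers of $N$ and $N_1$ both equal to $0$), so it does not close by itself.

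The paper's resolution of this region is elementary and is the step your plan is missing. First, $J_3$ is reduced to $J_2$ by the weight monotonicity $N\cro{N^{-1}}\cro{N}^{2\theta}\lesssim N_1\cro{N_1^{-1}}\cro{N_1}^{2\theta}$ for $N\lesssim N_1$, $N_1>2^9$, which holds precisely because $1+2\theta\ge 0$, i.e. $s\ge 1-\frac\alpha2$. Then, in $J_2$, the whole region $N\le N_1^{\frac23(1-\alpha)}$, $N_1>2^9$ is handled by the trivial bound \eqref{estitlow} with no modulation analysis at all: the factor $N_{\min}^{1/2}=N^{1/2}$ from \eqref{estPi} exactly cancels the singular weight $N^{-1/2}$ of $\overline{H}^{-1/2}$ on the low-frequency factor $w_N$, leaving a summand of size $N\,N_1^{\frac{\alpha-1}2}\|z_{N_1}\|_{L^2_TH^s}\|w_N\|_{L^\infty_T\overline{H}^{-1/2}}\|w_{N_1}\|_{L^2_TH^{\theta}}$; the geometric sum over $N\le N_1^{\frac23(1-\alpha)}$ then yields $N_1^{\frac23(1-\alpha)+\frac{\alpha-1}2}=N_1^{\frac{1-\alpha}6}\le 1$ for $\alpha\ge1$, and Cauchy--Schwarz in $N_1$ closes the estimate. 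In other words, the cutoff $N_1^{\frac23(1-\alpha)}$ is engineered so that below it the crude estimate absorbs both the weight and the endpoint loss $N_1^{\frac{\alpha-1}2}$, and above it Lemma \ref{lemtriestY} takes over; no analogue of Proposition \ref{prop-wF}'s modulation decomposition is needed (or possible) in the energy estimate for that region.
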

\begin{proof}
 Recall that the difference $w=u-v$ satisfies \eqref{eq-diff} with $ z=u+v$. \\
 Applying the operator $ P_N $ with $ N>0  $ dyadic  to equation \eqref{eq-diff}, taking the $H^s$ scalar product with $ P_N w $ and integrating
 on $ ]0,t[ $  we obtain
$$
\|w_N\|_{L^\infty_T \overline{H}^{s-\frac{3}{2}+\frac{\alpha}{2}}}^2\lesssim \|P_N w_0\|_{\overline{H}^{s-\frac{3}{2}+\frac{\alpha}{2}}}
+ \cro{N^{-1}} \cro{N}^{2(s-\frac 32+\frac \alpha 2)} \sup_{t\in [0,T]} \Bigl| \int_0^t\int_{\R}  P_N ( zw) \partial_x w_N \Bigr|.
$$
Therefore we have to estimate
$$
J:= \sum_{N>0} \cro{N^{-1}} \cro{N}^{2(s-\frac {3-\alpha}2)} \sup_{t\in [0,T]} \Bigl| \int_0^t\int_{\R}  P_N ( zw) \partial_x w_N \Bigr|.
$$
We  take extensions  $\tilde{z} $ and $ \tilde{w} $  of $ z $ and $ w$ supported in time in $]-4,4[ $   such that $ \|\tilde{z}\|_{Y^s} \lesssim \|u\|_{Y^s_T} $
 and  $ \|\tilde{w}\|_{Z^{s}} \lesssim \|w\|_{Z^{s}_T} $. To simplify the notation we drop the tilde in the sequel.\\
Proceeding as in (\ref{J1}) we get
\begin{align}
J &\lesssim \sum_{N>0}\sum_{N_1\gtrsim N} N\cro{N_1^{-1}} \langle N_1\rangle^{2(s-\frac {3-\alpha}2)} \sup_{t\in ]0,T[} |I_t(z_N, w_{\sim N_1}, w_{N_1})| \nonumber \\
&\quad + \sum_{N>0}\sum_{N_1\gtrsim N} N_1 \cro{N_1^{-1}} \langle N_1\rangle^{2(s-\frac {3-\alpha}2)} \sup_{t\in ]0,T[} |I_t(z_{\sim N_1}, w_N, w_{N_1})| \nonumber \\
&\quad + \sum_{N>0}\sum_{N_1\gtrsim N} N \cro{N^{-1}} \langle N\rangle^{2(s-\frac {3-\alpha}2)} \sup_{t\in ]0,T[} |I_t(z_{N_1}, w_{N_1}, w_N)| \nonumber \\
&:= J_1+J_2+J_3 \label{J2}.
\end{align}
{\it Estimates for $J_1$.} The contribution of the sum over $N\le 2^9$ in $J_1$ is estimated thanks to (\ref{estitlow}) by
\begin{align*}
&\sum_{N\le 2^9}\sum_{N_1\gtrsim N} N^{3/2} \|z_N\|_{L^\infty_tL^2_x} \|w_{N_1}\|_{L^\infty_t \overline{H}^{s-\frac{3-\alpha}2}}^2\\
&\quad \lesssim \|z\|_{L^\infty_tL^2_x} \|w\|_{\widetilde{L^\infty_T} \overline{H}^{s-\frac{3-\alpha}2}}^2.
\end{align*}
The contribution $N>2^9$ in $J_1$ can be controlled with Lemma \ref{lemtriestY} by
\begin{align*}
\sum_{N>2^9} \sum_{N_1\gtrsim N}& \Big[ \sum_{l\ge -4}2^{-l} \left(\frac N{N_1}\right)^{\frac{\alpha-1}2} \|z_N\|_{L^2_tH^{1-\frac \alpha 2}} \|Q_{2^l NN_1^\alpha}w_{N_1}\|_{F^{s-\frac {3-\alpha}2, \frac 12}} \|w_{N_1}\|_{L^\infty_t H^{s-\frac{3-\alpha}2}}\\
&\quad + \left(\frac N{N_1}\right)^{5\alpha/8} \|z_N\|_{F^{1-\frac \alpha 2,\frac 12}} \|w_{N_1}\|_{L^2_tH^{s-\frac{3-\alpha}2}} \|w_{N_1}\|_{L^\infty_tH^{s-\frac{3-\alpha}2}}\\
&\quad + N^{\frac \alpha 2-\frac 14} N_1^{\frac 18-\frac \alpha 2} \|z_N\|_{L^\infty_t H^{1-\frac \alpha 2}} \|w_{N_1}\|_{L^\infty_tH^{s-\frac{3-\alpha}2}}^2\Big]\\
&\lesssim \|z\|_{Y^{1-\frac \alpha 2}} \|w\|_{\widetilde{L^\infty_t}\overline{H}^{s-\frac{3-\alpha}2}} \|w\|_{Z^{s-\frac{3-\alpha}2}}
\end{align*}
where we used for the first term Cauchy-Schwarz in $(N,N_1)$ and then sum in $l$. Note that for $ \alpha>1 $ we could replace the $ \widetilde{L^\infty_t} H^{s-\frac{3}{2}+\frac{\alpha}{2}}$-norm by a standard $L^\infty_t H^{s-\frac{3}{2}+\frac{\alpha}{2}} $ by invoking the discrete Young inequality. \\
{\it Estimates for $J_2$.} We separate different contributions.  First, the contribution of the sum over $ N_1 \le 2^9 $ is directly estimated by
 $ \|z\|_{L^\infty_T L^2} \|w\|_{L^\infty_T H^{-\frac{1}{2}}}^2$.
The contribution of the sum over $ N\le N_1^{\frac{2}{3}(1-\alpha)} $ and $N_1>2^9$ is then easily estimated by
\begin{align}
  \sum_{N_1 >  2^9} &\sum_{N\le N_1^{\frac{2}{3}(1-\alpha)}  }  N N_1^{\frac{\alpha-1}2} \| z_{N_1}\|_{ L^2_T H^s}
 \| w_N\|_{L^\infty_T \overline{H}^{-1/2}}  \| w_{N_1}\|_{L^2_T H^{s-\frac{3}{2}+\frac{\alpha}{2}}}
 \nonumber \\
 & \lesssim \sum_{N_1 >  2^9}  N_1^{\frac{1-\alpha}6}
 \| z_{N_1}\|_{ L^2_T H^s}
 \| w\|_{L^\infty_T \overline{H}^{-1/2}}  \| w_{N_1}\|_{L^2_T H^{s-\frac{3}{2}+\frac{\alpha}{2}}}
 \nonumber \\
& \lesssim   \|z\|_{L^\infty_T H^s } \| w\|_{L^\infty_T \overline{H}^{-1/2}} \| w\|_{L^\infty_T H^{s-\frac{3}{2}+\frac{\alpha}{2}}} \; .
\end{align}
Finally the contribution of the sum over $N_1>2^9$ and $N\gg N_1^{\frac 23(1-\alpha)}$ is bounded thanks to Lemma \ref{lemtriestY} by
\begin{align*}
\sum_{N_1>2^9}\sum_{N\gg N_1^{\frac 23(1-\alpha)}} &\Big[ \sum_{l\ge -4} \|w_N\|_{L^\infty_t\overline{H}^{-1/2}}  \|Q_{2^lNN_1^\alpha} w_{N_1}\|_{F^{s-\frac{3-\alpha}2, \frac 12}}  \|z_{N_1}\|_{L^2_tH^s}\\
&\quad + \|w_N\|_{L^\infty_t\overline{H}^{-1/2}}  \|w_{N_1}\|_{L^2_tH^{s-\frac{3-\alpha}2}}  \|Q_{\sim NN_1^\alpha} z_{N_1}\|_{F^{s,\frac 12}}\\
&\quad + N^{-1/8}\cro{N}^{\frac{5+\alpha}8} N_1^{-\frac \alpha 8-\frac 12} \|w_N\|_{F^{-\frac 12,\frac 12}} \|w_{N_1}\|_{L^\infty_tH^{s-\frac{3-\alpha}2}}  \|z_{N_1}\|_{L^2_tH^s}\\
&\quad + N^{1/4} N_1^{-3/8} \|w_N\|_{L^\infty_t\overline{H}^{-1/2}} \|w_{N_1}\|_{L^\infty_tH^{s-\frac{3-\alpha}2}}  \|z_{N_1}\|_{L^\infty_tH^s}\\
&\lesssim \|z\|_{Y^s} (\|w\|_{\widetilde{L^\infty_t}\overline{H}^{-1/2}} \|w\|_{Z^{s-\frac{3-\alpha}2}} + \|w\|_{Z^{-1/2}} \|w\|_{\widetilde{L^\infty_t}\overline{H}^{-1/2}})
\end{align*}
where again we used Cauchy-Schwarz in $(N,N_1)$ and then sum over $l$.\\
{\it Estimates for $J_3$.} We first notice that for $N\lesssim N_1$ and $N_1>2^9$, since $1+2(s-\frac{3-\alpha}2)\ge 0$, it holds
$$
N\cro{N^{-1}} \cro{N}^{2(s-\frac{3-\alpha}2)} \lesssim N_1\cro{N_1^{-1}} \cro{N_1}^{2(s-\frac{3-\alpha}2)}.
$$
Therefore the contribution of this region to $J_3$ is controlled by $J_2$. Finally the contribution of $N\lesssim N_1\le 2^9$ is easily bounded by $\|z\|_{L^\infty_t L^2_x} \|w\|_{L^\infty_t \overline{H}^{-1/2}}^2$.\\

Gathering all the estimates, we eventually obtain
\begin{equation}\label{NN}
J\lesssim      \|z\|_{Y^s} \|w\|_{L^\infty_T \overline{H}^{-1/2} }  \|w \|_{Z^{s-\frac{3}{2}+\frac{\alpha}{2}}_T}+   \|z\|_{Y^{1-\frac{\alpha}{2}}_T}
  \|w\|_{\widetilde{L^\infty_T} H^{s-\frac{3}{2}+\frac{\alpha}{2}}}  \|w \|_{Z^{s-\frac{3}{2}+\frac{\alpha}{2}}_T}
\end{equation}

which completes the proof of  \eqref{estdiffHscritical}.
\end{proof}
\subsection{Unconditional well-posedness}\label{section42}
We argue as in Section \ref{unc}. We notice that $ 1-\frac{\alpha}{2} \ge 0 >s_c = \frac{1}{2}-\alpha $ that is the critical Sobolev exponent associated with  \eqref{bo}
 for dilation symmetry. Estimates \eqref{Y1}, \eqref{Y2}, \eqref{estHsnonregular}, dilations and continuity arguments ensure that  the  time of existence of a smooth solution is bounded from below by
 $ T=T(\|u_0\|_{H^{1-\frac{\alpha}{2}}})\sim  (1+ \|u_0\|_{H^{1-\frac{\alpha}{2}}})^{-\frac{2(\alpha+1)}{2\alpha-1}}$. Passing to the limit on a sequence of smooth solutions we construct a solution $ u\in \tilde{Y}^s_T $ to \eqref{bo} emanating from $ u_0\in H^s(\R) $.  On the other hand, Lemma \ref{YY}, Proposition \ref{YYY} and \eqref{tilde} ensure that any $ L^\infty_T H^s $-solution to \eqref{bo} on $ ]0,T[ $ belongs to $ \widetilde{Y}^s_T$. Therefore,  according to \eqref{wF1} and  \eqref{estdiffHscritical}, $u $  is the only solution emanating from $ u_0$ that belongs to $ L^\infty_{loc} H^s $.
Now the continuity of $ u $ with values in $H^{s}(\R)$ as well as the continuity of the flow-map in $H^{s}(\R^2)$ will follow
from the Bona-Smith argument.
   Let $ u_0 \in H^s $ with $ s\ge 1-\frac{\alpha}{2}$. We denote by $ u^N $ the solution of \eqref{bo} emanating from $ P_{\le N} u_0$ and we set
for $1\le N_1 \le N_2  $, we set
$$
w:=u^{N_1}-u^{N_2} \, .
$$
Let us notice that $ P_{\le 1 } w_0 = P_{\le 1 } (u^{N_1}-u^{N_2}) =0 $ and thus $ \|w_0\|_{\overline{H}^s}\sim \|w_0\|_{H^s} $.
It thus  follows from  \eqref{wF1} -\eqref{estdiffHscritical} and dilation arguments
 that for $ T\sim  (1+ \|u_0\|_{H^{1-\frac{\alpha}{2}}})^{-\frac{2(\alpha+1)}{2\alpha-1}}$ and any  $ -\frac{1}{2} \le r\le s $  it holds
\begin{equation}\label{to11}
\|w\|_{Z_T^{r}}\lesssim \|w(0)\|_{H^{r}} \lesssim N_1^{r-s}\varepsilon(N_1)
\end{equation}
with $ \varepsilon(y) \to 0 $ as $ y\to +\infty $ . Moreover, on account of  Lemma \ref{YY} and Proposition \ref{YYY}, for any $ r\ge 0 $ we have
\begin{equation}\label{to22}
\|u^{N_i}\|_{Y_T^{s+r}}
\lesssim
\|u^{N_i}_0\|_{H^{s+r}}
\lesssim N_i^{r} \|u_0\|_{H^s} \, .
\end{equation}
Next, observing that $w$ solves the equation
\begin{equation}\label{WW}
w_t+L_{\alpha +1} w =\frac{1}{2} \partial_x (w^2) +\partial_x (u^{N_1} w) \, ,
\end{equation}
we derive the following estimate on $ w$.
\begin{proposition}
Let $1<\alpha\le 2 $, $ 0<T<2$ and $ w\in L^\infty_T H^s $ with $ s\ge 1-\frac{\alpha}{2}$ be a solution to \eqref{WW}. Then it holds
\begin{eqnarray}
\|w\|_{L^\infty_TH^s}^2  &\lesssim & \|w_0\|_{H^s} ^2 +\|w\|_{Z^s_T}^3  \nonumber \\
& & +  \|u^{N_1} \|_{Y_T^s} \|w\|_{Z_T^s}^2
  + \|u^{N_1} \|_{Y_T^{s+\frac{3}{2}-\frac{\alpha}{2}}}  \|w\|_{Z_T^{s-\frac{3}{2}+\frac{\alpha}{2}}} \|w\|_{Z_T^s}
\;.
\label{estWW}
\end{eqnarray}
\end{proposition}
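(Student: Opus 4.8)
The plan is to mirror the proof of the regular-case estimate \eqref{estW}: the bound \eqref{estWW} is a consequence of the multilinear estimates already contained in the proofs of Propositions \ref{YYY} and \ref{PP1}, applied separately to the two nonlinear contributions of \eqref{WW}. First I would run the frequency-localized energy estimate: apply $P_N$ to \eqref{WW}, take the $H^s$ scalar product with $P_N w$ and integrate on $]0,t[$. Since $L_{\alpha+1}$ is skew-adjoint the linear term drops, and, after summing in $N$ (which controls $\|w\|_{L^\infty_T H^s}$ through the stronger $\widetilde{L^\infty_T}H^s$ norm), one is left with
\begin{equation*}
\|w\|_{L^\infty_T H^s}^2 \lesssim \|w_0\|_{H^s}^2 + \sum_{N>0}\langle N\rangle^{2s}\sup_{t\in]0,T[}\Bigl|\int_0^t\int_\R P_N\bigl(\tfrac12 w^2 + u^{N_1}w\bigr)\,\partial_x P_N w\Bigr|.
\end{equation*}
I would then split the trilinear term along the two summands $\tfrac12 w^2$ and $u^{N_1}w$ and estimate each.

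For the $\partial_x(w^2)$ piece, observe that $\sum_N \langle N\rangle^{2s}\sup_t|\int_0^t\int_\R P_N(w^2)\partial_x P_N w|$ is exactly the quantity $I$ bounded in \eqref{N} with $u$ replaced by $w$; crucially, \eqref{N} rests only on the multilinear Lemma \ref{lemtriestY} (through \eqref{416}) and not on $w$ solving \eqref{bo}, so it applies verbatim. This produces a bound by $\|w\|_{Y^{1-\alpha/2}_T}\|w\|_{L^\infty_T H^s}^2 + \|w\|_{L^\infty_T H^{1-\alpha/2}}\|w\|_{L^\infty_T H^s}\|w\|_{Y^s_T}$. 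Using the embeddings $Z^s_T\hookrightarrow L^\infty_T H^s$, $Z^s_T\hookrightarrow Y^s_T$ and, since $1-\tfrac\alpha2\le s$, $Z^s_T\hookrightarrow Z^{1-\alpha/2}_T\hookrightarrow Y^{1-\alpha/2}_T$ (the weight $\langle|\xi|^{-1/2}\rangle$ only strengthens the norm), every factor is controlled by $\|w\|_{Z^s_T}$, which yields the term $\|w\|_{Z^s_T}^3$.

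For the $\partial_x(u^{N_1}w)$ piece I would reuse, with $z=u^{N_1}$, the decomposition $J=J_1+J_2+J_3$ and the case analysis from the proof of Proposition \ref{PP1}, but now weighted by $\langle N\rangle^{2s}$ (the energy level $s$) instead of $\langle N^{-1}\rangle\langle N\rangle^{2(s-\frac{3-\alpha}2)}$. The same applications of Lemma \ref{lemtriestY}, Lemma \ref{lemB} and the low-frequency bound \eqref{estitlow}, followed by the Cauchy--Schwarz and discrete-Young bookkeeping in $(N,N_1)$ and in the modulation index $l$, give $\|u^{N_1}\|_{Y^s_T}\|w\|_{Z^s_T}^2 + \|u^{N_1}\|_{Y^{s+\frac32-\frac\alpha2}_T}\|w\|_{Z^{s-\frac32+\frac\alpha2}_T}\|w\|_{Z^s_T}$; combined with the previous paragraph and the trivial contribution of the low frequencies $N\le 2^9$ (handled by \eqref{estitlow}), this is \eqref{estWW}.

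The main obstacle is the routing of the spatial-derivative loss in the $\partial_x(u^{N_1}w)$ term. Because we are now at the energy level $s$ rather than at the lower difference level of Proposition \ref{PP1}, the derivative loss of order $\tfrac32-\tfrac\alpha2$ coming from the non-resonant gain (the modulation is $\sim N N_1^\alpha$, so $F^{s,1/2}$ buys back $(N N_1^\alpha)^{-1}$ or $(N N_1^\alpha)^{-5/8}$ via Lemma \ref{lemB}) can a priori fall on either $u^{N_1}$ or on a factor of $w$. To match the stated right-hand side one must consistently place it on $u^{N_1}$, which we control in the stronger space $Y^{s+\frac32-\frac\alpha2}_T$ (finite by \eqref{to22}, since $u^{N_1}$ solves \eqref{bo}), while keeping the two $w$-factors measured at levels $s$ and $s-\tfrac32+\tfrac\alpha2$. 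One then has to verify that after distributing the weight $\langle N\rangle^{2s}$ the dyadic sums still converge, i.e. that a strictly positive power of $N/N_1$ (or a square-summable sequence $\delta_N$) survives; this holds for $1<\alpha\le2$ and $s\ge1-\tfrac\alpha2$, exactly as in the $\delta_N$-estimates of Proposition \ref{PP1}.
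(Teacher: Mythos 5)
Your proposal is correct and follows essentially the same route as the paper: the paper likewise separates the contributions of $\partial_x(w^2)$ and $\partial_x(u^{N_1}w)$, bounds the first by applying \eqref{N} (from Proposition \ref{YYY}) to $w$, and bounds the second by applying \eqref{NN} (from Proposition \ref{PP1}) at the level $s$ with $z$ replaced by $u^{N_1}$, concluding with the embeddings $Z^s_T \hookrightarrow Y^s_T$ and $Z^{s-\frac{3}{2}+\frac{\alpha}{2}}_T \hookrightarrow Z^{-1/2}_T$ valid since $s-\frac{3}{2}+\frac{\alpha}{2}\ge -\frac{1}{2}$. Your observation that these estimates rest only on the trilinear lemmas and the dyadic bookkeeping, and not on the function in question solving \eqref{bo}, is precisely what makes this transfer legitimate.
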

\begin{proof}
 We separate the contribution of $ \partial_x(w^2) $ and $ \partial_x (u^{N_1} w) $. First   \eqref{N} leads to
 $$
 \sum_{N}N^{2s}  \Bigl| \int_0^t \int_{\R} P_N \partial_x(w^2) P_N w \Bigr| \lesssim \| w\|_{Y_T^s}^3  \; .
 $$
 Second, applying \eqref{NN} at the level $s $ with $z$ replaced by $ u^{N_1} $  we obtain
 $$
  \sum_{N} N^{2s} \Bigl| \int_0^t \int_{\R} P_N \partial_x(u^{N_1} w) P_N w \Bigr| \lesssim \|u^{N_1} \|_{Y_T^{s+\frac{3}{2}-\frac{\alpha}{2}}}
\|w\|_{Z^{-\frac{1}{2}}_T}    \|w\|_{Z_T^s}
  + \|u^{N_1} \|_{Y_T^{1-\frac{\alpha}{2}}} \|w\|_{Z_T^s}^2
 $$
which   leads to \eqref{estWW} since $ s-\frac{3}{2}+\frac{\alpha}{2} \ge -1/2$ for $ s\ge 1-\frac{\alpha}{2}$ and $ Z^s_T \hookrightarrow Y^s_T $.
\end{proof}
 Estimates \eqref{to11}-\eqref{to22} together with \eqref{estWW} lead to
\begin{eqnarray*}
\|w\|_{L^\infty_TH^s}^2  &\lesssim & \|w_0\|_{H^s} ^2 +\varepsilon(N_1)
+ N_1^{-\frac{3}{2}+\frac{\alpha}{2}} \,  N^{-(-\frac{3}{2}+\frac{\alpha}{2})}_1 \varepsilon(N_1)  \label{zq2}\\
 & \lesssim & \varepsilon(N_1) \nonumber    \; .
\end{eqnarray*}
This shows that $ \{u^N\} $ is a Cauchy sequence in $C([0,T];H^s) $ and thus $ \{u^N\} $ converges in $C([0,T];H^s) $ to a solution of \eqref{bo} emanating from $ u_0$. Therefore, the uniqueness result ensures that $ u\in C([0,T];H^s)$. The proof of the  continuity of the solution-map is exactly the same as
 in Subsection \ref{cont} as thus will be omitted .

\subsection{The periodic case}
We notice that all our estimates still hold in the periodic case and are uniform with respect to the period $ L \ge 1$  as soon as only  frequencies with modulus greater or equal to $ \frac{2\pi}{L} $ are involved.  We thus have only  to care about
 the contribution of the null frequencies. In the regular case, it is not too hard to check that all the estimates still hold when we also consider the contribution of the null frequencies. This is because we only use the resonance hypothesis \eqref{hyp2} for high input frequencies (see Remark \ref{remark2}).
In the non regular case, this is no longer true. Anyway, it is easy to check that  \eqref{bo}  preserves the mean-value and
it is well-known that the map $ u \mapsto v(t,x):=u(t,x-t{\int\hspace*{-3mm}-} u_0) -{\int\hspace*{-3mm}-} u_0 $ maps a solution of \eqref{bo} with mean-value
${\int\hspace*{-3mm}-} u_0 $ to a solution of \eqref{bo} with mean value zero. Therefore, up to this change of unknown, we may always assume that our solutions have mean-value zero and thus all the estimates still hold in the periodic setting.
 The proof of Theorem \ref{theo1} is  now completed.
 \section{Dissipative limits}\label{section5}
First, we notice that if $ u $ is solution to \eqref{bob}$_\varepsilon $ then $ u_\lambda $ defined by
 $ u_\lambda(t,x)=\lambda^{\alpha} u(\lambda^{1+\alpha} t, \lambda x) $ is solution to
 \begin{equation}\label{to111}
\partial_t u_\lambda +L_{\alpha+1}^\lambda u_\lambda +\varepsilon\lambda^{\alpha+1-\beta}  A_{\beta}^{\lambda} \, u_\lambda  +\frac{1}{2} \partial_x
  (u_\lambda)^2 =0 \,
 \end{equation}
  with
  $$
  \widehat{L_{\alpha+1}^\lambda v}(\xi)=i\lambda^{\alpha+1} p_{\alpha+1}(\lambda^{-1} \xi)\hat{v}(\xi) \, .
  $$
and
$$
  \widehat{ A_{\beta}^{\lambda} v}(\xi)=\lambda^{\beta} q_{\beta}( \lambda^{-1}\xi)\hat{v}(\xi), \; \forall \xi \in \R \, .
  $$
  Therefore, as in the preceding section, up to this change of unknown, of parameter $ \varepsilon $  and of operators we may assume that $ u $ satisfies
   \eqref{bob} with $ L_{\alpha+1} $ and $ A_\beta $ that verify Hypotheses \ref{hyp1} and \ref{hypdis}  for all $ 0<\lambda\le 1$. \\
Second, we notice  that Hypothesis \ref{hypdis} now ensures that for $ 0<\lambda \le 1 $ and $ N\gg 1 $ dyadic,
 \begin{equation}\label{A1}
 (A_{\beta}^{\lambda} P_N v, P_N v)_{L^2} \gtrsim N^{\beta/2} \|P_N v\|_{L^2}^2
 \end{equation}
 and
 \begin{equation}\label{A2}
 \|A_{\beta}^{\lambda} P_N v \|_{L^2} \lesssim N^{\beta} \|P_N v\|_{L^2} \, .
 \end{equation}
 The main point is now to prove that the Cauchy problem \eqref{bob} is locally well-posed in $ H^s $ uniformly  in $ \varepsilon>0 $.
   \begin{proposition}\label{prop2}
Let $ 1\le \alpha\le 2,\,  0\le \beta \le 1+\alpha $ and $ s\ge 1-\frac{\alpha}{2}$.\\
 For any   $ \varphi\in H^s(\R) $ there exists $ T\sim (1+\|u_0\|_{H^{1-\frac{\alpha}{2}}})^{-\frac{2(\alpha+1)}{2\alpha-1}}  $ and a  solution $ u_\varepsilon\in  C([0,T]; H^{s}) $  to \eqref{bob}$_\varepsilon$ that is unique in some function space\footnote{For  $(\alpha,s)\neq (1,1/2)$, this space is simply the space $ L^\infty_T H^s\cap L^2_T H^{s+\frac{\beta}{2}}$} embedded in $L^\infty_T(0,T; H^s)$.  Moreover, there exists  $ C>0 $ such that for any $  \varepsilon\in ]0,1[ $,  \begin{equation}\label{to3}
 \sup_{t\in [0,T]} \| u_\varepsilon (t)\|_{H^s} \le C \|\varphi\|_{H^s} \; .
\end{equation}
Finally, for any $ R>0 $, the family of  solution-maps $S_{\varepsilon}  \, :\,  \varphi \mapsto u_\varepsilon $, $ \varepsilon\in ]0,1[ $, from $ B(0,R)_{H^s}  $ into $  C([0,T(R)]; H^s(\R)) $ is equi-continuous, i.e. for any sequence $ \{\varphi_n\} \subset   B(0,R)_{H^s}  $ converging to $ \varphi $ in $ H^s(\R) $ it holds \begin{equation}\label{to4}
\lim_{n\to +\infty}  \sup_{\varepsilon\in ]0,1[}\|S_{\varepsilon} \varphi - S_{\varepsilon} \varphi_n\|_{L^\infty(0,T(R); H^s(\R))}=0 \; .
 \end{equation}
  \end{proposition}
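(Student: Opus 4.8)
The plan is to rerun the scheme of Sections~\ref{section3} and \ref{section4} on \eqref{bob}, carrying the dissipative term $\varepsilon A_\beta u$ along and checking that every bound is uniform in $\varepsilon\in\,]0,1[$. By the scaling recalled around \eqref{to111}, $u_\lambda(t,x)=\lambda^\alpha u(\lambda^{\alpha+1}t,\lambda x)$ solves \eqref{to111} with dissipation parameter $\varepsilon\lambda^{\alpha+1-\beta}$; since $0\le\beta\le\alpha+1$ we have $\lambda^{\alpha+1-\beta}\le 1$ for $0<\lambda\le 1$, so the rescaled parameter stays in $\,]0,1[$ while Hypotheses~\ref{hyp1} and \ref{hypdis} keep holding for $0<\lambda\le 1$. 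It thus suffices to prove all the estimates for a fixed $\varepsilon\in\,]0,1[$ with $\varepsilon$-independent constants and then to close them by the continuity argument of Subsections~\ref{unc} and \ref{section42}. Local existence of smooth solutions on a possibly $\varepsilon$-dependent interval is classical (semilinear parabolic theory when $\beta>0$; a bounded perturbation of the dispersive flow when $\beta=0$, since $A_0$ is then $L^2$-bounded).

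The first ingredient is the energy estimate. Applying $P_N$ to \eqref{bob}, taking the $H^s$ scalar product with $P_Nu$ and integrating on $\,]0,t[$ produces, exactly as in \eqref{415}, the trilinear term of Proposition~\ref{YYY} together with the extra contribution $\varepsilon\int_0^t(A_\beta P_Nu,P_Nu)_{H^s}$, which is nonnegative by \eqref{A1}. Discarding it shows that \eqref{estHsnonregular} (resp. \eqref{estHsregular} for $s>1/2$) holds for $u_\varepsilon$ with a constant independent of $\varepsilon$. Retaining it and summing in $N$ provides the additional parabolic gain
\[
\varepsilon\,\|u\|_{L^2_T H^{s+\beta/2}}^2 \lesssim \|u_0\|_{H^s}^2 + I,
\]
where $I$ is precisely the trilinear quantity already estimated in Proposition~\ref{YYY}.

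The main difficulty is the Bourgain-type bound, i.e. the analogue of Lemma~\ref{YY} controlling $\|u_\varepsilon\|_{Y^s_T}$ uniformly in $\varepsilon$. Writing \eqref{bob} as $u_t+L_{\alpha+1}u=-\varepsilon A_\beta u-\tfrac12\partial_x(u^2)$ and applying the linear estimate in the space $X^{s-\frac{\alpha+1}2,1}$ attached to $L_{\alpha+1}$, one obtains, besides the terms handled in Lemma~\ref{YY}, the forcing contribution $\varepsilon\|A_\beta u\|_{X^{s-\frac{\alpha+1}2,0}_T}$. Since $X^{\theta,0}=L^2_tH^\theta$, estimate \eqref{A2} gives
\[
\varepsilon\,\|A_\beta u\|_{X^{s-\frac{\alpha+1}2,0}_T}\lesssim \varepsilon\,\|u\|_{L^2_T H^{s-\frac{\alpha+1}2+\beta}}\lesssim \varepsilon^{1/2}\Bigl(\varepsilon^{1/2}\|u\|_{L^2_T H^{s+\beta/2}}\Bigr),
\]
the last step using precisely $\beta\le\alpha+1$, which guarantees $s-\frac{\alpha+1}2+\beta\le s+\frac\beta2$ so that the high frequencies are absorbed by the parabolic gain of the energy step (the low frequencies being harmless since $q_\beta$ is bounded on bounded intervals). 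This forcing term is therefore bounded, and even $O(\varepsilon^{1/2})$. Coupling it with the energy estimate and running the bootstrap of Subsection~\ref{section42} on the rescaled problem, then undoing the scaling, furnishes a solution $u_\varepsilon\in C([0,T];H^s)\cap L^2_TH^{s+\beta/2}$ on a time $T\sim(1+\|u_0\|_{H^{1-\frac\alpha2}})^{-\frac{2(\alpha+1)}{2\alpha-1}}$ independent of $\varepsilon$, together with the uniform bound \eqref{to3}. Uniqueness in $L^\infty_TH^s\cap L^2_TH^{s+\beta/2}$ follows from the difference estimates below.

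Finally, \eqref{to4} is obtained by the Bona--Smith argument of Subsection~\ref{section42}, carried out uniformly in $\varepsilon$. For fixed $\varepsilon$ the difference $w=S_\varepsilon\varphi-S_\varepsilon\varphi_n$ solves \eqref{eq-diff} with the extra term $\varepsilon A_\beta w$ and $z=S_\varepsilon\varphi+S_\varepsilon\varphi_n$; in the energy estimate for $w$ this term is again nonnegative by \eqref{A1} and is discarded, while in the Bourgain estimate for $w$ it is treated as the forcing $\varepsilon\|A_\beta w\|_{X^{\theta,0}}$ and controlled exactly as above thanks to $\beta\le\alpha+1$ and the parabolic gain for $w$. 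Hence Propositions~\ref{prop-wF} and \ref{PP1}, as well as the smoothing estimate for differences of smooth-data solutions, hold with $\varepsilon$-independent constants; splitting $w$ through the truncations $P_{\le N}$ and using the uniform bounds of the previous step to control the high-frequency remainders yields \eqref{to4} with a modulus of continuity independent of $\varepsilon$. The sole genuine obstacle throughout is the uniform handling of the dissipative forcing inside the dispersive Bourgain spaces, which is exactly what imposes the restriction $\beta\le\alpha+1$.
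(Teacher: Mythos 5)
Your proposal is correct and follows essentially the same route as the paper's proof: the same rescaling producing the dissipation coefficient $\varepsilon\lambda^{\alpha+1-\beta}$, the same energy estimate retaining the dissipative term as a parabolic gain $\sqrt{\varepsilon}\,\|u\|_{L^2_T H^{s+\beta/2}}$, the same treatment of $\varepsilon A_\beta u$ as a forcing term in the Bourgain-space estimate absorbed via \eqref{A2} and the condition $\beta\le\alpha+1$, and the same difference estimates plus Bona--Smith argument for uniqueness, \eqref{to3} and \eqref{to4}. The only (minor) omission is the endpoint case $(\alpha,s)=(1,1/2)$, which the paper dispatches in one sentence by running the identical scheme with the appendix estimates in place of those of Section \ref{section4}.
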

\begin{proof} We treat the cases $ (\alpha,s) \neq (1,1/2) $. This last case can be treated in the same way by using the estimates derived in the appendix.
First we notice that for \eqref{bob}$_\varepsilon $, in view of \eqref{A1}, the energy estimate \eqref{estHsnonregular} becomes
\begin{equation}
\|u\|_{\widetilde{L^\infty_T}H^s}  +\sqrt{\varepsilon} \|u\|_{L^2_T H^{s+\frac{\beta}{2}}} \lesssim \|u_0\|_{H^s}  +
 \|u\|_{L^\infty_T H^{1-\frac{\alpha}{2}}} \|u\|_{Y^s_T} + \|u\|_{L^\infty_T H^s}\|u\|_{Y_T^{1-\frac{\alpha}{2}}}
 \;.
\end{equation}
On the other hand, viewing $ \varepsilon A_{\beta} u  $ as a forced term, \eqref{Y1}-\eqref{Y2} together with \eqref{A2} lead to
\begin{equation}\label{Y3}
\|u\|_{Y^{s}_T}\lesssim \|u\|_{L^\infty_T H^s} (1+  \|u\|_{L^\infty_T H^{1-\frac{\alpha}{2}}}^2) + \varepsilon \|u\|_{L^2_T H^{s-\frac{1+\alpha}{2}+\beta}}
\, .
\end{equation}
To derive an a priori bound from the above estimates, as in the previous section,  we have to use  the dilation argument  that is described in the beginning of this section. So the dilation function $ u_\lambda $, defined by
 $ u_\lambda(t,x)=\lambda^{\alpha} u(\lambda^{1+\alpha} t, \lambda x) $, satisfies \eqref{to111} and we set
  $$
  \|v\|_{N^s}:=  \|v\|_{L^\infty_TH^s}  +\sqrt{\varepsilon \lambda^{\alpha+1-\beta}} \|v\|_{L^2_T H^{s+\frac{\beta}{2}}} \; .
  $$
  Since $ \beta \le \alpha+1 $, this ensures that for $ \lambda \lesssim (1+\|\varphi\|_{H^s})^{-\frac{2(\alpha+1)}{2\alpha-1}} $ and $ 0<T\le 2$,  it holds
  \begin{eqnarray*}
  \|u_\lambda\|_{N^s_T}& \lesssim &
   \|\varphi_\lambda\|_{H^s}  +(1+\|u_\lambda\|_{N^{1-\frac{\alpha}{2}}_T}^2) \|u_\lambda\|_{N^{1-\frac{\alpha}{2}}_T}\|u_\lambda\|_{N^s_T}
 \;.
  \end{eqnarray*}
  with $   \|\varphi_\lambda\|_{H^s}\lesssim \lambda^{\alpha-\frac{1}{2}} \|\varphi\|_{H^s} \ll 1 $. This leads to  the uniform bound \eqref{to3} for smooth solutions
   to \eqref{bob}$_\varepsilon $ by a classical continuity argument. Then passing to the limit on sequence of smooth solutions
   we obtain the existence of a solution
  $ u_\varepsilon \in L^\infty_T H^s\cap L^2_T H^{s+\frac{\beta}{2}} $ to \eqref{bob}$_\varepsilon$ with  $  T\gtrsim (1+\|u_0\|_{H^{1-\frac{\alpha}{2}}})^{-\frac{2(\alpha+1)}{2\alpha-1}} $ and $ \varphi\in H^s $ as initial data. Obviously, this solution satisfies \eqref{to3}.

  Now, proceeding in the same way for the difference of two solutions, it is not too hard to check that \eqref{wF1} becomes
  \begin{align*}
\|u-v\|_{Z^{s-\frac{3}{2}+\frac{\alpha}{2}}_T} \lesssim  & \|u-v\|_{\widetilde{L^\infty_T} \overline{H}^{s-\frac{3}{2}+\frac{\alpha}{2}}}
 +   \|u-v\|_{L^2_T {H}^{s-\frac{3}{2}+\frac{\alpha}{2}+\beta}}\\
 & + \|u+v\|_{\widetilde{Y}^s_T} \|u-v\|_{Z^{-1/2}_T}
+ \|u+v\|_{\widetilde{Y}^{1-\frac{\alpha}{2}}}\|u-v\|_{Z^{s-\frac{3}{2}+\frac{\alpha}{2}}_T}\, .
  \end{align*}
  whereas  \eqref{estdiffHscritical} becomes
 \begin{align*}
\|u_\varepsilon-v_\varepsilon\|_{\widetilde{L^{\infty}_T}\overline{H}^{s-\frac{3}{2}+\frac{\alpha}{2}} }  + &
\sqrt{\varepsilon} \|u_\varepsilon-v_\varepsilon\|_{L^{2}_T\overline{H}^{s-\frac{3}{2}+\frac{\alpha}{2}+\frac{\beta}{2}} } \\
& \lesssim \|u_0-v_0\|_{\overline{H}^{s-\frac{3}{2}+\frac{\alpha}{2}} }  +
\|u_\varepsilon+v_\varepsilon\|_{\widetilde{Y}^s_T}
\|u_\varepsilon-v_\varepsilon\|_{Z^{s-\frac{3}{2}+\frac{\alpha}{2}}_T} \;.
\end{align*}
  By the same dilation arguments as above this leads to
  \begin{equation}\label{sz}
  \|u-v\|_{Z^{s-\frac{3}{2}+\frac{\alpha}{2}}} + \sqrt{\varepsilon}  \|u-v\|_{L^2_T H^{s-\frac{3}{2}+\frac{\alpha}{2}+\frac{\beta}{2}} }
  \lesssim \|u_0-v_0\|_{\overline{H}^{s-\frac{3}{2}+\frac{\alpha}{2}}}
  \end{equation}
  and proves the uniqueness in the class $L^\infty_{loc} H^s \cap L^2_{loc} H^{s+\beta/2}$. Finally the continuity of the solution and the equi-continuity of the solution-map in $ C(0,T;H^s) $ follows from Bona-Smith arguments as in the previous section.
\end{proof}
It is clear that the above proposition implies  part $(1)$ of Theorem \ref{theo2}.
 Now,  part $ (2) $ will follow from general arguments (see for instance  \cite{GW}).   Let us denote by $ S_\varepsilon $ and $ S$ the nonlinear
  group associated with respectively \eqref{bob}$_\varepsilon $   and \eqref{bo}. Let  $ \varphi\in H^s(\R) $, $s\ge 1-\frac{\alpha}{2}$ and let $ T=T(\|\varphi\|_{H^{1-\frac{\alpha}{2}}} ) >0 $ be given by Proposition \ref{prop2}. For any $ N>0 $  we can rewrite $ S_{\varepsilon}(\varphi)-
  S(\varphi) $ as
  \begin{eqnarray*}
  S_{\varepsilon}(\varphi)-S(\varphi)&=& \Bigl(S_{\varepsilon}(\varphi)-S_{\varepsilon}(P_{\le N} \varphi)\Bigr)+
  \Bigl(S_{\varepsilon}(P_{\le N} \varphi)- S(P_{\le N}\varphi)\Bigr)\\
  & & + \Bigl(S(P_{\le N}\varphi)-S(\varphi)\Bigr)=I_{\varepsilon,N}+J_{\varepsilon,N}+K_{N}\; .
  \end{eqnarray*}
  By continuity with respect to initial data in $ H^s(\R) $ of the solution map associated with \eqref{bo}, we have  $\displaystyle \lim_{N\to \infty}
  \|K_N\|_{L^\infty(0,T;H^s) }=0 $. Moreover,  \eqref{to4} ensures that
  $$
  \lim_{N\to \infty} \sup_{\varepsilon\in]0,1[}\|I_{\varepsilon,N}\|_{L^\infty(0,T;H^s)} =0 \; .
  $$
  It thus remains to check that for any fixed $ N>0 $,  $ \displaystyle \lim_{\varepsilon\to 0} \|J_{\varepsilon,N}\|_{L^\infty(0,T;H^s_x)}=0 $.
  Since $ P_{\le N} \varphi\in H^\infty(\R) $, it is worth noticing that
   $ S_{\varepsilon}(P_{\le N} \varphi) $ and $ S(P_{\le N}\varphi) $ belong to $C^\infty (\R;H^\infty(\R)) $. Moreover, according to Theorem \ref{theo2} and  Proposition \ref{prop2},  for all $ \theta\in \R$ and $ \varepsilon\in ]0,1[ $,
   $$
   \|S_{\varepsilon}(P_{\le N} \varphi) \|_{L^\infty_T H^{\theta}_x}+ \|S(P_{\le N} \varphi)\|_{L^\infty_T H^{\theta}_x} \le C(N,\theta,  \|\varphi\|_{L^2_x})  \; .
   $$

 Now, setting   $ v_\varepsilon := S_{\varepsilon}(P_{\le N} \varphi)  $ and  $v:=S(P_{\le N}\varphi) $, we  observe that   $ w_\varepsilon:= v_\varepsilon-v $ satisfies
   $$
    \partial_t w_\varepsilon+L_{\alpha+1} w_\varepsilon = -\frac{1}{2}\partial_x \Bigl( w_\varepsilon
    (v +v_\varepsilon)\Bigr) -\varepsilon A_\beta v_\varepsilon
   $$
   with initial data $ w_\varepsilon(0)=0 $. For $ s\ge 0$, taking the  $ H^s $-scalar product of this last equation with $w_\varepsilon $ and  integrating by parts we get
   $$
   \frac{d}{dt} \|w_\varepsilon\|_{H^s} \lesssim \Bigl( 1+\|\partial_x(v +v_\varepsilon)\|_{L^\infty_x}\Bigl) \| w_\varepsilon\|_{H^s}^2
   + \|[J^s\partial_x , (v +v_\varepsilon)] w_\varepsilon\|_{L^2_x} \| w_\varepsilon\|_{H^s} + \varepsilon^2 \|D_x^{\beta}v_\varepsilon\|_{H^s}^2 \; .
   $$
 Applying  the mean-value theorem  on the Fourier transform of the commutator term, it is not too hard to check that
   \begin{equation}\label{commute}
   \| [J^s_x\partial_x , f] g \|_{L^2_x} \lesssim \|f_x\|_{H^{s+1} }\| g\|_{H^s_x}\; ,
   \end{equation}
   that leads to
   $$
   \frac{d}{dt} \| w_\varepsilon(t)\|_{H^s}^2\lesssim C(N,s+2,  \|\varphi\|_{L^2_x})
   \| w_\varepsilon(t)\|_{H^s_x}^2+ \varepsilon^2  C(N,s+\beta,  \|\varphi\|_{L^2_x})^2\; .
   $$
   Integrating this differential inequality on $ [0,T]$, this ensures  that $  \displaystyle \lim_{\varepsilon\to 0} \| w_\varepsilon\|_{L^\infty(0,T;H^s) }=0 $
    and proves that
\begin{equation}
\label{j1}
    u_\varepsilon \longrightarrow u \; \mbox{ in } C([0,T]; H^s)
\end{equation}
    with $ T\sim  (1+\|u_0\|_{H^{1-\frac{\alpha}{2}}})^{-\frac{2(\alpha+1)}{2\alpha-1}} $.
Now, the fact that, $ \varphi $ being fixed,  the  time of existence $ T_\varepsilon $ of $ S_\varepsilon (\varphi) $ in $ H^s $ is greater or equal
   for $ \varepsilon>0 $ small enough  to the time of existence $ T_0 $ of
 $ S (\varphi) $  follows by a classical contradiction argument. Indeed, assuming that this is not true, there exists $ \varepsilon_n \searrow 0 $ such that $\lim T_{\varepsilon_n}=T^*<T_0 $. We set
$$ \delta= (1+\|S(\varphi)\|_{L^\infty(0,T^*;H^{1-\frac{\alpha}{2}})})^{-\frac{2(\alpha+1)}{2\alpha-1}}
$$
which is  well-defined since $ T^*<T $.  Applying \eqref{j1} about $ T^*/\delta $ times we eventually obtain that for $ n $ large enough
$$
\|S_{\varepsilon_n}(\varphi)(T^*-\frac{\delta}{100}) \|_{H^{1-\frac{\alpha}{2}}}\le 2
\|S(\varphi)\|_{L^\infty(0,T^*;H^{1-\frac{\alpha}{2}})}\; .
 $$
 But then the uniform bound from below on the existence time ensures that $ T_{\varepsilon_n} \ge T^*+ \delta/2 $ that contradicts  $\lim T_{\varepsilon_n}=T^*$. This ensures that $ T_\varepsilon\ge T_0 $ for $ \varepsilon>0 $ small enough and, for $0<T^*<T_0$, applying \eqref{j1} about $ T^*/\delta $ times we get \eqref{j1} with $T=T^*$. This completes the proof of Theorem \ref{theo2}.
  \section{Appendix: The case $ \alpha=1 $ and $ s=1/2$.}
 This case is important since $ H^{1/2}$ is the energy space for the Benjamin-Ono equation and also the Intermediate Long Waves equation. Unfortunately, we are not able to prove the unconditional well-posedness in this case. However, we are able to prove the well-posedness without using a gauge transform. This is  useful to treat perturbations of these equations  as we explained in the preceding section. In this section we indicate the modifications of the proofs in this case.
 In the sequel we set
 $$ \widetilde{M}^{1/2}:= \widetilde{L^\infty_t} H^{1/2} \cap X^{-1/2,1} \; .
 $$
 \begin{lemma} \label{lemA1} Let  $ \alpha=1 $, $ 0<T<2$, and $ u\in \widetilde{M}_T^{1/2} $  be a solution to \eqref{bo}. Then it holds
 \begin{equation}
 \|u\|_{ \widetilde{M}^{1/2}_T} \lesssim \|u\|_{\widetilde{L^\infty_T} H^{1/2}} + \|u\|_{ \widetilde{M}^{1/2}_T}^2 \; .
 \end{equation}
 \end{lemma}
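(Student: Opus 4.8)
The plan is to reproduce, at the endpoint $(s,\alpha)=(1/2,1)$, the scheme of Lemmas \ref{lem1} and \ref{YY}, the only---but decisive---difference being that the failure of $H^{1/2}$ to be an algebra forces us to replace everywhere the $\ell^1$ dyadic summation by the $\ell^2$ summation built into the tilde norms. First I would extend $u$ off $(0,T)$ by the operator $\rho_T$ of \eqref{defrho}, so that the contribution of $\widetilde{L^\infty_T} H^{1/2}$ to $\|u\|_{\widetilde{M}^{1/2}_T}$ is already present on the right-hand side and it only remains to bound the $X^{-1/2,1}_T$ part. By the Duhamel formula for \eqref{bo} and the standard linear estimate in Bourgain's spaces at $b=1$,
\[
\|u\|_{X^{-1/2,1}_T}\lesssim \|u_0\|_{H^{-1/2}}+\|\partial_x(u^2)\|_{X^{-1/2,0}_T},
\]
and since $\|u_0\|_{H^{-1/2}}\lesssim \|u\|_{\widetilde{L^\infty_T} H^{1/2}}$, the whole matter reduces to proving $\|\partial_x(u^2)\|_{X^{-1/2,0}}\lesssim \|u\|_{\widetilde{M}^{1/2}}^2$ for the extension.

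Next I would run the Littlewood--Paley analysis on $u^2$, writing $P_N(u^2)$ as the sum of a low-frequency term (controlled by $\|u\|_{L^\infty_t L^2}^2$), the low--high term $P_N(P_{\ll N}u\,u_{\sim N})$ and the high--high term $\sum_{N_1\sim N_1'\gtrsim N}P_N(u_{N_1}u_{N_1'})$. For each piece I would freeze the output frequency $N$, insert the weight $N\cro{N}^{-1/2}$ coming from $\partial_x$ and from the $X^{-1/2,0}$ norm, and decompose the three factors in modulation. Here Lemma \ref{resonance.lem}, with $\alpha=1$ so that the resonance scale is $\sim N_1N$, splits every interaction into the region where one \emph{input} carries modulation $\gtrsim N_1N$ and the region where the \emph{output} does. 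In the first region I would spend the gained modulation through the $X^{-1/2,1}$ regularity exactly as in Lemma \ref{lemB}, bounding $\|Q_{\gtrsim N_1N}u_{N_1}\|_{L^2}\lesssim (N_1N)^{-1}\cro{N_1}^{1/2}\|u_{N_1}\|_{X^{-1/2,1}}$, and then closing the dyadic sums by a Schur / discrete-Young argument that produces $\ell^2$ (rather than $\ell^1$) norms; the high--high contribution, including its fully non-resonant part, is handled the same way, the factor $N/N_1\le 1$ furnishing the summability.

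The main obstacle is the low--high interaction in the remaining region, where both inputs sit at low modulation and the non-resonance relation \eqref{hyp2} forces the \emph{large} modulation onto the output. This is precisely the configuration in which $H^{1/2}$ fails to be an algebra: the crude bound $\|P_{\ll N}u\|_{L^\infty_x}\lesssim \sum_{N_1\ll N}N_1^{1/2}\|u_{N_1}\|_{L^2_x}$ is an $\ell^1$ sum and diverges, and no modulation weight is available on the output in $X^{-1/2,0}$. I expect to resolve it by a refined bilinear estimate built on \eqref{hyp2} (transversality of the two group velocities at frequencies $N_1$ and $N$), which provides a genuine gain of a positive power of $N_1/N$; combined with the $\ell^2$ dyadic summation afforded by $\widetilde{L^\infty_T} H^{1/2}$ and by $X^{-1/2,1}$, and a Cauchy--Schwarz in the low frequency, this converts the offending sum into a convergent one. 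It is exactly this $\ell^2$-versus-$\ell^1$ mechanism that explains both the appearance of the tilde (Besov-type) norm in $\widetilde{M}^{1/2}$ and the loss of unconditional uniqueness recorded in the remarks.

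Finally I would collect the three contributions, obtaining $\|\partial_x(u^2)\|_{X^{-1/2,0}}\lesssim \|u\|_{\widetilde{M}^{1/2}}^2$, and combine this with the Duhamel bound above to reach $\|u\|_{\widetilde{M}^{1/2}_T}\lesssim \|u\|_{\widetilde{L^\infty_T} H^{1/2}}+\|u\|_{\widetilde{M}^{1/2}_T}^2$, which is the claim.
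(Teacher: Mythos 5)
Your skeleton (extension by $\rho_T$, Duhamel and the linear estimate at $b=1$, reduction to $N^{1/2}\|P_N(u^2)\|_{L^2_{tx}}$, Littlewood--Paley splitting, and the identification of the low--high interaction with resonant output as the decisive case) is exactly the paper's. But at that decisive case your argument has a genuine gap. In the region where both factors $u_{\sim N}$ and $u_{N_1}$, $N_1\ll N$, carry modulation $\ll N_1N$, so that Lemma \ref{resonance.lem} pins the \emph{output} modulation at $\sim N_1N$, you appeal to an unproven ``refined bilinear estimate built on \eqref{hyp2}'' giving a gain of a positive power of $N_1/N$, to be combined with Cauchy--Schwarz in $N_1$. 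No such estimate is available with the norms at hand: transversality (bilinear Strichartz) gains require some decay in the \emph{input} modulations, whereas here the inputs are controlled only in $L^\infty_tH^{1/2}$-type norms. Concretely, localizing the inputs and using the standard convolution bound
\begin{equation*}
\|Q_{L_1}u_{\sim N}\,Q_{L_2}u_{N_1}\|_{L^2_{tx}}\lesssim \Bigl(\frac{L_{\min}L_{\max}}{N}\Bigr)^{1/2}\|Q_{L_1}u_{\sim N}\|_{L^2_{tx}}\|Q_{L_2}u_{N_1}\|_{L^2_{tx}},
\end{equation*}
the sum over $L_1,L_2\ll N_1N$ is dominated by the top scales and returns a factor $N_1N^{1/2}$ -- no better than H\"older--Bernstein -- because $L^\infty_tH^{1/2}$ gives no decay in $L_1,L_2$. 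The paper closes this region by a different mechanism, which is really the point of the lemma: since the output modulation is forced to be $\sim N_1N$, the pieces $Q_{\sim N_1N}P_N(u_{\sim N}u_{N_1})$ for distinct dyadic $N_1$ have almost disjoint supports in $\tau-p_{\alpha+1}(\xi)$, hence are almost orthogonal in $L^2_{tx}$; the $\ell^1$ sum in $N_1$ thereby becomes an $\ell^2$ sum for free, which is precisely what $\|u\|_{\widetilde{L^\infty_t}H^{1/2}}$ controls (this is the term $I\!I^1_N$ in the paper's proof). No power of $N_1/N$ is gained or needed there.

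There is also a quantitative problem in the non-resonant region, where one input has modulation $\gtrsim N_1N$: you spend the full modulation weight, e.g.\ $\|Q_{\gtrsim N_1N}u_{\sim N}\|_{L^2_{tx}}\lesssim (N_1N)^{-1}N^{1/2}\|u_{\sim N}\|_{X^{-1/2,1}}$. Combined with Bernstein on the low-frequency factor this leaves $N_1^{-1/2}\|u_{N_1}\|_{L^\infty_tL^2_x}$ (and a worse factor when the large modulation sits on $u_{N_1}$), which is not summable over the dyadic frequencies $N_1\lesssim 1$, where $\|u_{N_1}\|_{L^\infty_tL^2_x}$ has no decay; your Schur/Young argument cannot close there. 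This is exactly why the paper does not use $X^{-1/2,1}$ at full strength in this region, but only through the interpolated norm $X^{1/4,1/4}$ (interpolation between $X^{-1/2,1}$ and the time-localized $L^\infty_tH^{1/2}\hookrightarrow X^{1/2,0}$), together with $L^4_x$ Bernstein--Sobolev bounds, so that the modulation gain $(N_1N)^{-1/4}$ never overwhelms the Bernstein loss $N_1^{1/2}$. This second defect is repairable -- treat all $N_1\lesssim 1$ by the trivial H\"older--Bernstein bound (for such $N_1$ the relation \eqref{resonance} carries little information anyway) and keep the modulation analysis only for $1\lesssim N_1\ll N$ -- but it must be addressed, and in any case the missing almost-orthogonality argument in the resonant region is a gap that your proposal, as written, does not fill.
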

 \begin{proof}
 Working with the extension $\tilde{u}=\rho_Tu$ (see (\ref{defrho})), still denoted $u$, if suffices to estimate the $X^{-1/2,1}$-norm of $u$.
 First we notice that the low frequency part can be easily controlled by
 $$
\| P_{\le2^9} u  \|_{X^{-1/2,1}_T} \lesssim \|u\|_{L^\infty_T L^2_x}^2 \; .
$$
Now for $ N\ge 2^9 $, we have
 \begin{eqnarray*}
 \| u_N\|_{X^{-1/2,1}_T} &\lesssim& \|P_Nu_0\|_{H^{-1/2}} +  N^{1/2} \left\|\sum_{N_2'\sim N_2\ge N} u_{N_2} u_{N_2'} \right\|_{L^2_{T}L^2_x}
 \\ & &+ N^{1/2}  \left\|\sum_{N_2\ll N} P_N( \ut_{\sim N}  \, u_{N_2}) \right\|_{L^2_{T}L^2_x} \\
 & = & \|P_Nu_0\|_{H^{-1/2}} + I_N + I\! I_N \; .
 \end{eqnarray*}
 Clearly, it holds
  \begin{eqnarray*}
I_N  & \lesssim &  N^{1/2}\sum_{N_2'\sim N_2\ge N}  \| u_{N_2} \|_{L^2_t H^{1/2}}  \| | u_{N_2'} \|_{L^\infty_t H^{1/2}} \\
 & \lesssim & \delta_N \| | u\|_{L^\infty_t H^{1/2}}^2  \; ,
 \end{eqnarray*}
 with $ \| (\delta_N)\|_{l^2} \lesssim 1$.
 On the other hand,
   \begin{eqnarray*}
I\! I_N  & \lesssim &  N^{1/2}\Bigl\|\sum_{N_2 \ll  N} Q_{\sim N N_2} P_N(  u_{\sim N} \,  u_{N_2})\Bigr\|_{L^2_{tx}}
+ N^{1/2}\Bigl\|\sum_{N_2 \ll  N} Q_{\not \sim N N_2} P_N( u_{\sim N}  \,  u_{N_2})\Bigr\|_{L^2_{tx}} \\
 & \lesssim &  I\! I^1_N + I\! I^2_N \; .
 \end{eqnarray*}
 By almost orthogonality, we have
   \begin{eqnarray*}
I\! I^1_N  & \lesssim &  N^{1/2}\Bigl(\sum_{N_2 \ll  N}  \Bigl\| Q_{\sim N N_2} P_N( u_{\sim N} \,   u_{N_2})\Bigr\|_{L^2_{tx}}^2 \Bigr)^{1/2} \\
 &\lesssim & N^{1/2} \Bigl( \sum_{N_2 \ll  N} \| u_{\sim N} \|_{L^2_t  H^{1/2}}^2 \|  u_{N_2}\|_{L^\infty_t L^2_x}^2 \Bigr)^{1/2} \\
 & \lesssim &   \| u_{\sim N} \|_{L^2_t  H^{1/2}} \| u\|_{\widetilde{L^\infty_t}  H^{1/2}} \\
 & \lesssim & \delta_N  \|u\|_{L^\infty_t  H^{1/2}} \| u\|_{\widetilde{L^\infty_t}  H^{1/2}} \; ,
 \end{eqnarray*}
 with $ \| (\delta_N)\|_{l^2} \lesssim 1$.
 It remains to control $ I\! I^2_N $. Since the Fourier projectors ensure that $ \langle \tau-p_2(\xi) \rangle \not\sim N N_2 $, the resonance relation \eqref{hyp2} leads to $ |\tau_1-p_2(\xi_1)|\vee |\tau-\tau_1-p_2(\xi-\xi_1)|) \ge N N_2 $  for $ I\! I^2_N$.
 We separate the contributions of $ Q_{\ge N N_2}  \ut_{\sim N}$ and  $Q_{\ge N N_2}  \ut_{N_2}$.
For the first contribution we  have
\begin{eqnarray*}
I\! I_{N}^2 & \lesssim & N^{1/2} \sum_{N_2\ll N} (N N_2)^{-1/4} N^{1/4} \|Q_{\ge NN_2} u_{\sim N} \|_{X^{1/4,1/4}} \| u_{N_2}\|_{L^\infty_t H^{1/2}} \\
& \lesssim &  \| u_{\sim N} \|_{X^{1/4,1/4}} \| u\|_{L^\infty_t H^{1/2}} \\
& \lesssim & \delta_N \|u\|_{X^{-1/2,1}}^{1/4} \| u\|_{L^\infty_t H^{1/2}}^{3/4} \| u\|_{L^\infty_t H^{1/2}} \, ,
\end{eqnarray*}
with $ \| (\delta_N)\|_{l^2} \lesssim 1$ and where we used interpolation at the last step.  For the second contribution we write
\begin{eqnarray*}
I\! I_{N}^2 & \lesssim & N^{1/2} \sum_{N_2\ll N} \|Q_{<N N_2}  u_{\sim N}  \|_{L^\infty_t L^4_x} \|Q_{\ge N N_2}  u_{N_2}\|_{L^2_t L^4_x} \\
& \lesssim & N^{1/2} \sum_{N_2\ll N} N^{-1/4}\|Q_{<N N_2}  u_{\sim N}  \|_{L^\infty_t H^{1/2}} \|Q_{\ge N N_2}  u_{N_2}\|_{L^2_t H^{1/4}} \\
& \lesssim & N^{1/2} \sum_{N_2\ll N} N^{-1/4} (N N_2)^{-1/4} \| u_{\sim N}  \|_{L^\infty_t H^{1/2}} \|  u_{N_2}\|_{X^{1/4,1/4}} \\
& \lesssim & \delta_N  \| u\|_{\widetilde{L^\infty_t} H^{1/2}}\| u\|_{X^{-1/2,1}}^{1/4} \|| u\|_{L^\infty_t H^{1/2}}^{3/4}  \, ,
\end{eqnarray*}
with $ \| (\delta_N)\|_{l^2} \lesssim 1$.
Gathering the above estimates, \eqref{A1} follows.
\end{proof}
 \begin{lemma} \label{lemA2} Let  $ \alpha=1 $, $ 0<T<2$ and $ u\in \widetilde{M}_T^{1/2} $  be a solution to \eqref{bo}. Then it holds
 \begin{equation}
 \|u\|_{\widetilde{L^\infty_T} H^{1/2}} \lesssim \|u\|_{\widetilde{L^\infty_t} H^{1/2}} +  \|u\|_{\widetilde{L^\infty_t} H^{1/2}} \|u\|_{ \widetilde{M}^{1/2}_T} \; .
 \end{equation}
 \end{lemma}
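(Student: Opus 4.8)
The plan is to run the frequency-localized energy method of Proposition~\ref{prou}, now at the critical couple $(s,\alpha)=(1/2,1)$ and in the tilde spaces $\widetilde{L^\infty_T}H^{1/2}$ and $\widetilde{M}^{1/2}_T$. First I would apply $P_N$ to \eqref{bo}, take the $H^{1/2}$ scalar product with $P_N u$ and integrate on $]0,t[$. Since $L_2$ is skew-adjoint on $H^{1/2}$ its contribution drops out, and writing $uu_x=\frac12\partial_x(u^2)$ and transferring $\langle D_x\rangle$ onto one factor gives, for each dyadic $N$,
\[
\|P_N u\|_{L^\infty_T H^{1/2}}^2\lesssim \|P_N u_0\|_{H^{1/2}}^2+\cro{N}\sup_{t\in]0,T[}\Bigl|\int_0^t\!\!\int_\R P_N(u^2)\,\partial_x P_N u\Bigr|.
\]
Summing these over the dyadic $N$ — which is precisely why one works in $\widetilde{L^\infty_T}H^{1/2}$ rather than $L^\infty_T H^{1/2}$ — reduces the lemma to the trilinear bound $I\lesssim\|u\|_{\widetilde{L^\infty_T}H^{1/2}}^2\,\|u\|_{\widetilde{M}^{1/2}_T}$, where $I=\sum_N\cro{N}\sup_t|\cdots|$. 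The statement then follows from the elementary implication $X^2\lesssim a^2+X^2M\Rightarrow X\lesssim a+XM$, applied with $X=\|u\|_{\widetilde{L^\infty_T}H^{1/2}}$, $a=\|u_0\|_{H^{1/2}}$ and $M=\|u\|_{\widetilde{M}^{1/2}_T}$.

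To estimate $I$ I would pass to the extension $\tilde u=\rho_T u$ and invoke the purely algebraic reductions \eqref{decdy}, \eqref{taylorphi} and \eqref{estI1}, which are independent of the regularity, to replace $I$ by $\sum_{N>0}\sum_{N_1\gtrsim N}N\cro{N_1}\sup_t|I_t(u_N,u_{\sim N_1},u_{N_1})|$ (note $\cro{N_1}^{2s}=\cro{N_1}$ here). The range $N\le 2^9$ is harmless: \eqref{estitlow} and Cauchy--Schwarz bound it by $\|u\|_{L^\infty_t L^2_x}\|u\|_{\widetilde{L^\infty_T}H^{1/2}}^2\le\|u\|_{\widetilde{L^\infty_T}H^{1/2}}^2\|u\|_{\widetilde{M}^{1/2}_T}$.

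The heart of the matter is $N>2^9$, which I would treat as in Lemma~\ref{lemtriest}: split $1_{[0,t]}=1_{t,R}^{high}+1_{t,R}^{low}$ with $R=N^{3/2}N_1^{1/8}$, which is $\ll NN_1$, the resonance level furnished by Lemma~\ref{resonance.lem} for $\alpha=1$ (where $\Omega\sim NN_1$). The high part is controlled by $\|1_{t,R}^{high}\|_{L^1}\lesssim R^{-1}$ (Lemma~\ref{ihigh-lem}) and \eqref{estPi}, yielding a prefactor $N^{-1/2}N_1^{-1/8}$ in front of three $L^\infty_t H^{1/2}$ norms; although neither power sums by itself, Cauchy--Schwarz in $N$ and then in $N_1$ — i.e.\ the $\ell^2$, hence $\widetilde{L^\infty}$, summability — closes it and produces $\|u\|_{\widetilde{L^\infty_T}H^{1/2}}^3$. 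For the low part I would decompose via Lemma~\ref{resonance.lem} so that one factor carries modulation $\gtrsim NN_1$, move $1_{t,R}^{low}$ across the corresponding high-modulation projection using Lemma~\ref{ilow-lem} (legitimate since $R\ll NN_1$), and measure that factor in $X^{-1/2,1}$, keeping the other two in $\widetilde{L^\infty}H^{1/2}$ or $L^2_t H^{1/2}$.

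The main obstacle is the summation of this last, resonant contribution. At $(s,\alpha)=(1/2,1)$ there is no room to spare: extracting the full modulation gain $(NN_1)^{-1}$ and placing the resonant factor directly in $X^{-1/2,1}$ leaves a scale-invariant double sum with coefficient $1$, for which $\sum_{N\lesssim N_1}\|u_N\|_{L^\infty_t H^{1/2}}$ diverges logarithmically and cannot be absorbed even by the $\ell^2$ structure. To beat the logarithm I would argue exactly as in Lemma~\ref{lemA1}: use only a fractional modulation gain, interpolating between $X^{-1/2,1}$ and $L^\infty_t H^{1/2}$ so as to land the resonant factor in an intermediate space such as $X^{1/4,1/4}$, with $\|\cdot\|_{X^{1/4,1/4}}\lesssim\|\cdot\|_{X^{-1/2,1}}^{1/4}\|\cdot\|_{L^\infty_t H^{1/2}}^{3/4}$. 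This trades part of the modulation decay for a genuinely summable dyadic weight, after which Schur's test / Young's inequality in the dyadic variables close the sum; each term then carries exactly one factor $\|u\|_{\widetilde{M}^{1/2}_T}$ and two factors $\|u\|_{\widetilde{L^\infty_T}H^{1/2}}$. Collecting the high- and low-modulation parts gives $I\lesssim\|u\|_{\widetilde{L^\infty_T}H^{1/2}}^2\|u\|_{\widetilde{M}^{1/2}_T}$ and hence the lemma. It is exactly this $\ell^2$-in-frequency, rather than $L^\infty_t H^{1/2}$, bookkeeping that blocks unconditional uniqueness in this endpoint case.
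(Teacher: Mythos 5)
Your setup (the frequency-localized energy inequality, the extension by $\rho_T$, the reductions \eqref{decdy}--\eqref{taylorphi}--\eqref{estI1}, the treatment of $N\le 2^9$ and of the high-frequency part of $1_{[0,t]}$) matches the paper, and you correctly isolate the real difficulty: at $(s,\alpha)=(1/2,1)$ the resonant contribution, in which the dominant modulation $\gtrsim NN_1$ sits on one of the two high-frequency factors, produces a double dyadic sum with coefficient exactly $1$. But your resolution of that difficulty fails, on two counts. First, your claim that this sum ``cannot be absorbed even by the $\ell^2$ structure'' is incorrect: the paper's proof of this very lemma consists precisely in absorbing it that way. It invokes Lemma \ref{lemtriestY} (not Lemma \ref{lemtriest}), whose first term keeps the modulation localization $Q_{2^lNN_1^{\alpha}}u_{N_1}$ \emph{inside} the $F^{0,\frac12}$-norm, and then applies Cauchy--Schwarz in the pair $(N,N_1)$. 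The point you miss is that, for fixed $N_1$ and $l$, the projections $Q_{2^lNN_1}$ cover essentially disjoint modulation ranges as $N$ varies, so that $\sum_{N\lesssim N_1}\|Q_{2^lNN_1}u_{N_1}\|_{F^{1/2,1/2}}^2\lesssim \|u_{N_1}\|_{F^{1/2,1/2}}^2$; this is what kills the logarithm. The other Cauchy--Schwarz factor is then handled by Littlewood--Paley orthogonality, $\sum_N\|u_N\|^2_{L^2_tH^{1/2}}\lesssim\|u\|^2_{L^\infty_tH^{1/2}}$ for the compactly supported extension, together with the $\ell^2$-in-$N$ structure of $\widetilde{L^\infty_t}H^{1/2}$ --- which is exactly why the lemma is stated in the tilde spaces.

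Second, the substitute you propose --- keeping only a fractional modulation gain and measuring the resonant factor in $X^{1/4,1/4}$ via $\|\cdot\|_{X^{1/4,1/4}}\lesssim\|\cdot\|_{X^{-1/2,1}}^{1/4}\|\cdot\|_{L^\infty_tH^{1/2}}^{3/4}$ --- fails by direct power counting. In the term $N\cro{N_1}\,\bigl|I_\infty(1^{low}_{t,R}u_N,u_{\sim N_1},Q_{\gtrsim NN_1}u_{N_1})\bigr|$, estimate \eqref{estPi} yields the factor $N^{1/2}$, and the derivative weight $NN_1$ must be cancelled by the modulation gain. With the full gain, $\|Q_{\gtrsim NN_1}u_{N_1}\|_{L^2_{tx}}\lesssim (NN_1)^{-1}N_1^{1/2}\|u_{N_1}\|_{X^{-1/2,1}}$, one lands on the coefficient-$1$ sum; with a partial gain $(NN_1)^{-\theta}$ and the interpolated norm, converting every factor to the $1/2$-regularity level leaves a coefficient $N^{1-\theta}$ (the powers of $N_1$ cancel identically). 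For your choice $\theta=1/4$ this is $N^{3/4}$, so the sum over $N$ diverges polynomially --- far worse than the logarithm you were trying to beat. The interpolation device is legitimate in Lemma \ref{lemA1}, where one sums over the \emph{low} frequency $N_2\ll N$ at fixed output frequency $N$ and the exponents balance differently; in the trilinear energy estimate the full gain $(NN_1)^{-1}$ is indispensable, and the summation must instead be rescued by the orthogonality-plus-Cauchy--Schwarz argument described above.
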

 \begin{proof}
 We follow the proof of Proposition \ref{YYY}. Note that $ \widetilde{M}^{1/2} \hookrightarrow \widetilde{Y}^{1/2} $. According to \eqref{415} it suffices to control
  $$
I = \sum_{N>0}\sum_{N_1\gtrsim  N}  N\langle N_1\rangle\sup_{t\in ]0,T[} |I_t(u_N, u_{\sim N_1}, u_{N_1})|.
$$
It is easy to check that the only term of the left-hand side of  \eqref{416}  that causes trouble in the case $\alpha=1 $ is the first one. This term corresponds to the contribution of $ Q_{2^l N N_1^\alpha} u_{N_1} $ and $ Q_{\sim N N_1^{\alpha}} u_{N_1} $.  For $ \alpha=1 $ we control these contributions by applying Cauchy-Schwarz in
 $(N,N_1) $. For instance, the contribution of $ Q_{2^l N N_1^\alpha} u_{N_1} $ is estimated  thanks to Lemma \ref{lemtriestY} by 
 \begin{align*}
 \sum_{N>2^9} &\sum_{N_1\gtrsim N}N\langle N_1 \rangle \sum_{l\ge -4} 2^{-l} N^{-1/2}  \|u_N\|_{L^2_{tx}} \|Q_{2^l N N_1^\alpha} u_{N_1}\|_{F^{0,1/2}}
  \|u_{\sim N_1} \|_{L^\infty_t L^2_x} \\
& \lesssim \sum_{l\ge -4} 2^{-l} \Bigl( \sum_{N_1\gtrsim N\ge 2^9} \|u_N\|_{L^2_t H^{1/2}}^2 \| u_{\sim N_1}\|_{L^\infty_t H^{1/2}}^2 \Bigr)^{1/2}\Bigl( \sum_{N_1\gtrsim N\ge 2^9} \| Q_{2^l N N_1^\alpha} u_{N_1} \|_{F^{1/2,1/2}}^2  \Bigr)^{1/2}\\
  &\lesssim \|u\|_{L^2_t H^{1/2}} \|u\|_{\widetilde{L^\infty_t} H^{1/2}} \|u\|_{F^{1/2,1/2}} \; .
 \end{align*}
 \end{proof}
  \begin{lemma} \label{lemA3} Let $ 0<T<1$  and $u,v \in \widetilde{M}^{1/2}_T $  be two solutions to (\ref{bo}) on $]0,T[$. Then we have
\begin{equation}\label{A9}
\|u-v\|_{Z^{-\frac{1}{2}}_T} \lesssim \|u-v\|_{L^\infty_T \overline{H}^{s-\frac{3}{2}+\frac{\alpha}{2}}} + \|u+v\|_{ \widetilde{M}^{1/2}_T} \|u-v\|_{Z^{-1/2}_T}
+ \|u+v\|_{ \widetilde{M}^{1/2}_T}\|u-v\|_{Z^{-\frac{1}{2}}_T}\, .
\end{equation}
and
\begin{equation}\label{A10}
\|u-v\|_{\widetilde{L^{\infty}_T}\overline{H}^{-\frac{1}{2} }}  \lesssim \|u_0-v_0\|_{\overline{H}^{-\frac{1}{2}} }  +
\|u+v\|_{\widetilde{M}^{1/2}_T}
\|u-v\|_{Z^{-\frac{1}{2}}_T} \;.
\end{equation}
 \end{lemma}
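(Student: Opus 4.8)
The plan is to mirror the proofs of Proposition \ref{prop-wF} and Proposition \ref{PP1}, specialized to the endpoint $\alpha=1$, $s=\frac12$, where $s-\frac32+\frac\alpha2=-\frac12$ and $1-\frac\alpha2=\frac12$. Thus \eqref{A9} and \eqref{A10} are precisely \eqref{wF1} and \eqref{estdiffHscritical} at these parameters, except that the driving norm $\|u+v\|_{\tilde{Y}^s_T}$ (resp. $\|u+v\|_{Y^s_T}$) is replaced by $\|u+v\|_{\widetilde{M}^{1/2}_T}$. The structural fact that makes this substitution legitimate is the embedding $\widetilde{M}^{1/2}\hookrightarrow\tilde{Y}^{1/2}$ already exploited in Lemma \ref{lemA2}: since $z:=u+v$ and $w:=u-v$ belong to $\widetilde{M}^{1/2}_T$ by hypothesis, we have $\|z\|_{\tilde{Y}^{1/2}_T}\lesssim\|z\|_{\widetilde{M}^{1/2}_T}$, so every occurrence of a $\tilde{Y}^{1/2}$- or $Y^{1/2}$-norm of $z$ in the main-text estimates can be dominated by its $\widetilde{M}^{1/2}$-norm. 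First I would record that $w$ solves \eqref{eq-diff} with $z=u+v$, extend $z$ and $w$ off $]0,T[$ via the operator $\rho_T$ of \eqref{defrho} exactly as in the main text, and control the low-frequency piece $P_{\le1}w$ in $\widetilde{L^\infty_T}\overline{H}^{-1/2}$ directly from Duhamel's formula, as in the paragraph preceding Proposition \ref{prop-wF}.

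For \eqref{A9} I would follow the proof of Proposition \ref{prop-wF} line by line. After the extension and the standard linear Bourgain estimate, it suffices to bound $\|\partial_x(zw)\|_{F^{-1/2,-1/2}_T}$, which I split into low-high, high-low and high-high interactions. The low-high and high-low pieces are handled by the product and Bernstein estimates of Proposition \ref{prop-wF}, producing the terms $\|z\|_{L^\infty_t H^{1/2}}\|w\|_{\widetilde{L^\infty_t}\overline{H}^{-1/2}}$ and $\|z\|_{L^2_tH^{1/2}}\|w\|_{L^\infty_t\overline{H}^{-1/2}}$; the high-high piece is decomposed along the resonance relation \eqref{resonance} into its three modulation regimes and estimated through Lemma \ref{lemB}, reproducing the three $\delta_N$ bounds of Proposition \ref{prop-wF} with $\alpha=1$, $s=\frac12$ inserted. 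One checks that at these parameters the relevant powers of $N/N_1$ (with $N\le N_1$) stay positive, e.g. $(N/N_1)^{1/2-\alpha/8}=(N/N_1)^{3/8}$, so the dyadic series converge and close against $\|z\|_{\widetilde{M}^{1/2}}$ and $\|w\|_{Z^{-1/2}}$.

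For \eqref{A10} I would follow the proof of Proposition \ref{PP1}. Applying $P_N$ to \eqref{eq-diff}, pairing with $P_Nw$ in $\overline{H}^{-1/2}$, and integrating on $]0,t[$ reduces matters, after the $\ell^2$-summation over dyadic $N$ (which is exactly the $\widetilde{L^\infty_T}\overline{H}^{-1/2}$ norm), to estimating the trilinear quantity $J=J_1+J_2+J_3$ of \eqref{J2} at $\alpha=1$, $s=\frac12$. The low-frequency range $N\le2^9$ is controlled by \eqref{estitlow}, and the remainder by Lemma \ref{lemtriestY}, whose hypothesis $N_1\gtrsim N_3^{\frac23(1-\alpha)}\wedge2^9$ degenerates for $\alpha=1$ to $N_1\gtrsim1$ and hence imposes no new constraint. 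Each of $J_1,J_2,J_3$ then yields the very contributions displayed in Proposition \ref{PP1}, all closing against $\|z\|_{\widetilde{M}^{1/2}}$, $\|w\|_{\widetilde{L^\infty_T}\overline{H}^{-1/2}}$ and $\|w\|_{Z^{-1/2}_T}$, which gives \eqref{A10}.

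The delicate point, and the only reason this endpoint requires a separate treatment, is the summation of the dyadic series at $\alpha=1$, $s=\frac12$. There the gain factor $(\cro{N}/\cro{N_1})^{s-1+\alpha/2}$ that drove the convergence when $\alpha>1$ degenerates to $(\cro{N}/\cro{N_1})^{0}=1$, so one can no longer sum in $N_1$ by the discrete Young inequality and transfer the outcome to the unweighted $L^\infty_t\overline{H}^{-1/2}$ norm --- precisely the operation flagged as available only for $\alpha>1$ in the $J_1$-estimate of Proposition \ref{PP1}. Instead I would retain the residual algebraic gain furnished by the Bourgain/resonance estimates, such as the $(N/N_1)^{3/8}$ factor above, and perform a Cauchy-Schwarz summation over the pair $(N,N_1)$; this is exactly why \eqref{A10} is phrased with the $\ell^2$-summed norm $\widetilde{L^\infty_T}\overline{H}^{-1/2}$ rather than $L^\infty_T\overline{H}^{-1/2}$, and it is the same borderline mechanism already handled in Lemma \ref{lemA2}. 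Once this summation is organized correctly, the remaining computations are routine.
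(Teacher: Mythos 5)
Your treatment of \eqref{A10} is correct and coincides with the paper's: Proposition \ref{PP1} is stated and proved for $1\le\alpha\le 2$ precisely so that it covers $(s,\alpha)=(1/2,1)$ (the Cauchy--Schwarz summation in $(N,N_1)$ replacing discrete Young, and the resulting appearance of the $\ell^2$-summed norm $\widetilde{L^\infty_T}\overline{H}^{-1/2}$, are already built into that proof), so \eqref{A10} follows at once from \eqref{estdiffHscritical} and the embeddings $\widetilde{M}^{1/2}_T\hookrightarrow\widetilde{Y}^{1/2}_T\hookrightarrow Y^{1/2}_T$.

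Your proof of \eqref{A9}, however, has a genuine gap: you have misplaced the difficulty. The high-high contributions of Proposition \ref{prop-wF} do survive at $(s,\alpha)=(1/2,1)$, exactly as you say, because their prefactors retain residual gains such as $(N/N_1)^{1/2-\alpha/8}=(N/N_1)^{3/8}$ and $\cro{N}^{-(1+\alpha)/8}$ even though $(\cro{N}/\cro{N_1})^{s-1+\alpha/2}$ degenerates to $1$; the same is true of the high-low piece, whose gain $(N_1/\cro{N})^{1/2}$ is independent of $(s,\alpha)$. The term that fails is the low-high one, which you claim is ``handled by the product and Bernstein estimates of Proposition \ref{prop-wF}.'' In that estimate the dyadic sum carries the factor $\sum_{N_1\lesssim N}(N_1/\cro{N})^{(\alpha-1)/2}\,\|z_{N_1}\|_{L^\infty_tH^{1-\alpha/2}}$: for $\alpha>1$ the geometric gain makes it summable, but at $\alpha=1$ the gain disappears entirely, and the sum over $N_1\lesssim N$ cannot be closed (even with the $\ell^2$-summed norm of $z$, Cauchy--Schwarz costs a factor $(\log N)^{1/2}$ and no negative power of $N$ remains to absorb it). This is precisely the term the paper's appendix proof is devoted to: the paper splits $P_N(P_{\gg 1}P_{\lesssim N}z\,w_{\sim N})$ in modulation along \eqref{hyp2}; the resonant piece $\sum_{N_1}Q_{\sim NN_1}P_N(z_{N_1}w_{\sim N})$ is summed in $N_1$ by \emph{almost orthogonality} (an $\ell^2$ rather than $\ell^1$ sum, since distinct $N_1$ produce disjoint modulation ranges); for the non-resonant piece, the resonance relation forces a modulation $\gtrsim NN_1$ on either $z_{N_1}$ or $w_{\sim N}$, and these cases are estimated respectively via the interpolation bound $\|Q_{\ge NN_1}z_{N_1}\|_{X^{1/4,1/4}}$ controlled by $\|z\|_{X^{-1/2,1}}^{1/4}\|z\|_{L^\infty_tH^{1/2}}^{3/4}$, and via \eqref{B1} applied to $w$. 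Note that this argument uses the full $X^{-1/2,1}$ component of the $\widetilde{M}^{1/2}$-norm of $z$, not merely the embedding $\widetilde{M}^{1/2}\hookrightarrow\widetilde{Y}^{1/2}$ on which your whole proposal rests; if that embedding sufficed, the lemma would not need to be formulated in $\widetilde{M}^{1/2}_T$ at all. Without a substitute for this modulation/orthogonality/interpolation mechanism, your proof of \eqref{A9} does not close.
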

 \begin{remark}
 Actually we could avoid to put a   weight on the low frequencies of the difference $u-v$.  However, working in $ Z^{-1/2}_T $ allow us to use directly some results of Section \ref{section4}.
 \end{remark}
 \begin{proof}
 First we notice that \eqref{A10} is already proven in Proposition \ref{PP1}. since $ \widetilde{M}^{1/2}_T   \hookrightarrow  \widetilde{Y}^{1/2}_T\hookrightarrow Y^{1/2}_T $. It remains to prove \eqref{A9}.
 We follow the proof of Proposition \ref{YYY}. It is not to hard to check that the only contribution that causes troubles in the right-hand side member of
  \eqref{est-wF}, in the case $ \alpha=1 $, is the contribution of the low-high interaction term : $P_N (P_{\lesssim N} z \, w_N) $.
  We proceed as in Lemma \ref{lemA1} . We take extensions   $\widetilde{z} $ and $ \widetilde{w} $, supported in $ ]-4,4[ $  of $ z $ and $ w$ such that $
  \|\widetilde{z}\|_{\widetilde{M}^{1/2}}\lesssim \|z\|_{\widetilde{M}^{1/2}_T} $ and $ \|\widetilde{w} \|_{Z^{-1/2}}\lesssim \|w\|_{Z^{-1/2}_T} $. For simplicity we drop the tilde. We first notice that the contribution of $ P_{\lesssim 1} z  $ is easily estimated by
  $$
  \|\partial_xP_N(P_{\lesssim 1}z  \, w_{\sim N} )\|_{F^{-1/2,1,-1/2}} \lesssim \langle N\rangle^{-1/2} \|    P_N(P_{\lesssim 1}z  \, w_{\sim N})\|_{L^2_{tx}}
  \lesssim \|z \|_{L^\infty_t L^2_x}  \|w_{\sim N} \|_{L^2_t H^{-1/2}}
  $$
  which is acceptable. Now we decompose the remaining contribution as
 \begin{align*}
\|\partial_xP_N(P_{\gg 1} P_{\lesssim N}z w_{\sim N})\|_{F^{-1/2,1,-1/2}} &\lesssim N\|   \sum_{ 1\ll  N_1\lesssim N} P_N(P_{N_1}z w_{\sim N})\|_{X^{-3/2,0}}\\
& \lesssim \langle N\rangle^{-1/2} \|   \sum_{1\ll N_1\lesssim N} Q_{\sim N N_1} P_N(P_{N_1}z  \, w_{\sim N})\|_{L^2_{tx}} \\
& + \langle N\rangle^{-1/2}\|   \sum_{1\ll N_1\lesssim N} Q_{\not \sim N N_1} P_N(P_{N_1}z  \, w_{\sim N})\|_{L^2_{tx}}\\
& =  J_{1,N}+J_{2,N} \; .
\end{align*}
By almost-orthogonality it holds
 \begin{align*}
J_{1,N} & \lesssim \langle N\rangle^{-1/2}  \Bigl(  \sum_{1\ll N_1\lesssim N} \|Q_{\sim N N_1} P_N(P_{N_1}z  \, w_{\sim N})\|_{L^2_{tx}}^2 \Bigr)^{1/2}\\
& \lesssim \langle N\rangle^{-1/2} \Bigl( \sum_{1\ll N_1\lesssim N} \|P_{N_1}z \|_{L^2_t H^{1/2}}^2 \| \, w_{\sim N}\|_{L^\infty_{tx}}^2 \Bigr)^{1/2}\\
& \lesssim \|  w_{\sim N}\|_{L^\infty_t H^{-1/2}} \|z\|_{L^2_t H^{1/2}} ,
\end{align*}
which is acceptable.
To treat $ J_2 $, we notice that the Fourier projectors ensure that $ \langle \tau-p_2(\xi) \rangle \not\sim N N_1 $, the resonance relation \eqref{hyp2} leads to $ |\tau_1-p_2(\xi_1)|\vee |\tau-\tau_1-p_2(\xi-\xi_1)|) \ge N N_1 $  for $J_{2,N}$.
 We separate the contributions of $ Q_{\ge N N_1}  \zt_{N_1}$ and  $Q_{\ge N N_1}  \wt_{\sim N}$.
For the first contribution we  write
\begin{eqnarray*}
J_{2,N} & \lesssim & \langle N\rangle^{-1/2}  \sum_{1\ll N_1\lesssim N}  N_1^{1/2} \|Q_{\ge NN_1}P_{N_1} z\|_{L^2_{tx}} \|  w_{\sim N}\|_{L^\infty_t L^2_x} \\
& \lesssim & \langle N\rangle^{-1/2}  \sum_{1\ll N_1\lesssim N} (N N_1)^{-1/4} N_1^{1/4} \|Q_{\ge NN_1}P_{N_1} z\|_{X^{1/4,1/4}}
 \|  w_{\sim N}\|_{L^\infty_t L^2_x} \\
& \lesssim &  \|z\|_{X^{-1/2,1}}^{-1/4} \|z\|_{L^\infty_t H^{1/2}}^{3/4} \|  w_{\sim N}\|_{L^\infty_t H^{-1/2}} \, ,
\end{eqnarray*}
which is acceptable.  For the second contribution, according to \eqref{B1}  we have
\begin{eqnarray*}
J_2
& \lesssim & \langle N\rangle^{-1/2}  \sum_{1\ll N_1\lesssim  N}
\| z_{N_1} \|_{L^\infty_t H^{1/2}} \|Q_{\ge N N_1}   w_{\sim N}\|_{L^2_{tx}} \\
& \lesssim & \langle N\rangle^{-1/2}  \sum_{1\ll N_1\lesssim  N} (N N_1)^{-1} N^{3/2}
\| z_{N_1} \|_{L^\infty_t H^{1/2}} \|  w_{\sim N}\|_{F^{-1/2,1/2}} \\
& \lesssim & \|  w_{\sim N}\|_{F^{-1/2,1/2}} \|z\|_{L^\infty_t H^{1/2}}
\end{eqnarray*}
which is acceptable. Gathering the above estimates we obtain \eqref{A9}.
 \end{proof}
Gathering  Lemmas \ref{lemA1}-\ref{lemA3} and proceeding as in Subsection \ref{section42} we obtain the local-well-posedness in $
 H^{1/2} $ of \eqref{bo}  for $ \alpha=1 $. Note that the  uniqueness holds in the space $ \widetilde{M}^{1/2}_T $.
 \bibliographystyle{amsplain}

\end{document}